\numberwithin{equation}{section}
\newtheorem{theorem}{Theorem}[section]
\newtheorem{lemma}[theorem]{Lemma}
\newtheorem{corollary}[theorem]{Corollary}
\newtheorem{proposition}[theorem]{Proposition}
\theoremstyle{remark}
\newtheorem{remark}[theorem]{Remark}
\newtheorem{definition}[theorem]{Definition}
\newcommand{\dd}{\,\mathrm{d}}
\newcommand{\R}{\mathbb{R}}
\newcommand{\C}{\mathbb{C}}
\newcommand{\N}{\mathbb{N}}
\newcommand{\T}{\mathbb{T}}
\newcommand{\Z}{\mathbb{Z}}
\newcommand{\1}{\mathbf{1}}
\renewcommand{\d}{\mathrm{d}}
\renewcommand{\epsilon}{\varepsilon}
\renewcommand{\phi}{\varphi}
\global\long\def\supp{\operatorname{supp}}
\global\long\def\F{\mathcal{F}}
\title[On Besov spaces with dominating mixed smoothness]{Characterization of Besov spaces with dominating mixed smoothness by differences}
\author[Nikolaev]{Paul Nikolaev}
\address{Paul Nikolaev, University of Mannheim, Germany}
\email{pnikolae@mail.uni-mannheim.de}
\author[Pr{\"o}mel]{David J. Pr{\"o}mel}
\address{David J. Pr{\"o}mel, University of Mannheim, Germany}
\email{proemel@uni-mannheim.de}
\author[Trabs]{Mathias Trabs}
\address{Mathias Trabs, Karlsruhe Institute of Technology, Germany}
\email{trabs@kit.edu}
\date{\today}
\begin{document}

\begin{abstract}
  Besov spaces with dominating mixed smoothness, on the product of the real line and the torus as well as bounded domains, are studied. A characterization of these function spaces in terms of differences is provided. Applications to random fields, like Gaussian fields and the stochastic heat equation, are discussed, based on a Kolmogorov criterion for Besov regularity with dominating mixed smoothness.
\end{abstract}

\maketitle 

\noindent \textbf{Key words:} Besov space, function space with dominating mixed smoothness, Sobolev space, Gaussian field, Kolmogorov criterion, Littlewood--Paley theory, stochastic heat equation.

\noindent \textbf{MSC 2020 Classification:} 46E30, 30H25, 35K05, 60H15.
% Classification
% --------------
% 46E30 - Linear function spaces and their duals (Spaces of measurable functions (Lp -spaces, Orlicz spaces, Köthe function spaces, Lorentz spaces, rearrangement invariant spaces, ideal spaces, etc.))
% 30H25 - Spaces and algebras of analytic functions of one complex variable (Besov spaces and Qp -space)
% 35K05 - Parabolic equations and parabolic systems (Heat equation)
% 60H15 - (Stochastic analysis) Stochastic partial differential equations (aspects of stochastic analysis)

%\tableofcontents

\section{Introduction}

Regularity concepts for functions, related to Besov and Sobolev spaces with dominating mixed smoothness, play a fundamental role in many areas of pure and applied mathematics. For instance, they appear naturally in the context of approximation theory~\cite{Temlyakov1993}, numerical analysis~\cite{Schwab2008}, probability theory~\cite{Hummel2021}, statistics~\cite{GoldenshlugerLepsik2011} and the study of (stochastic) partial differential equations~\cite{Quer2007,CheminZhangZhang2015}. Function spaces with dominating mixed smoothness of Sobolev type were originally introduced by Nikol'ski\u{\i} \cite{Nikolsikii1962,Nikolsikii1963} in the sixties and afterwards generalized in numerous directions, like to the more general class of Besov spaces. For general introductions to Besov spaces with dominating mixed smoothness and their history we refer, e.g., to \cite{Schmeisser1987,Schmeisser2007,Triebel2019}.

Besov spaces with dominating mixed fractional smoothness are often classically defined by Fourier-analytic approaches relying, e.g., on an anisotropic generalizations of Littlewood--Paley theory, cf. e.g. \cite[Chapter~2]{Schmeisser1987}. While the Fourier-analytic definitions of Besov spaces with dominating mixed smoothness have, with no doubts, many merits, there has been a systematic on-going effort to derive various equivalent characterizations of the these function spaces, each coming with its own advantages. For instance, Besov spaces with dominating mixed smoothness can be equivalently defined by using atomic, wavelet and spline representations, see e.g. \cite[Chapter~1.1]{Triebel2019}, or characterized as tensor product of the corresponding isotropic function spaces, see \cite{Sickel2009}.

In the present paper, we define Besov spaces with dominating mixed smoothness, on the product of the real line and the torus, based on the anisotropic generalizations of the Littlewood--Paley theory, see \eqref{eq:Besov norm anisotropic} below. The main result are two equivalent characterizations of these function spaces in terms differences, see Definition~\ref{def: characterisation of Besov spaces} and Theorem~\ref{thm:characteristation}, assuming fractional regularity $\overline{\alpha}\in (0,1)^2$, integrability $\overline{p} \in[1,\infty]^2$ and $\overline{q} \in [1, \infty]^2$. Consequently, this equivalent characterization can be deduced for Besov spaces with dominating mixed smoothness on bounded domains. Similar results were previously derived, either on the whole space~$\R^d$, bounded domains or the torus~$\T^d$, by Triebel, Schmeisser and Ulrich, see e.g. \cite{Schmeisser1987,Ulrich2006}. Building on these works, we present an accessible and transparent proof, taking care of domains with a mixed structure.

The characterization by difference is well-suited to study the Besov regularity with dominating mixed smoothness of random fields, emerging in probability theory and related fields. Notice that, in contrast to their isotropic regularity, the Besov regularity of dominating mixed smoothness of many random fields does not get worse with increasing space dimension. For instance, the (fractional) Brownian sheet is, in higher dimension, much smoother when its regularity is measured with dominating mixed smoothness, see e.g. \cite{Kamont1996,Hummel2021}. This observation becomes essential, e.g., when developing pathwise approaches to stochastic partial differential equations, see e.g. \cite{Quer2007,Hairer2014}. Using the characterization by difference, we derive a Kolmogorov criterion for Besov regularity with dominating mixed smoothness of random fields. Consequently, we deduce the regularity of Gaussian fields and of the solution to the stochastic heat equation. % Keeping in mind the lifting property and Young's inequality for convolution, we further discuss the Besov regularity with dominating mixed of Ambit fields.

\medskip

\noindent \textbf{Organization of the paper:} In Section~\ref{sec:Besov spaces} we recall the definition of Besov spaces with dominating mixed smoothness and present some of their elementary properties. Three equivalent characterizations of these Besov spaces are provided in Section~\ref{sec:integral characterisations}. The Besov regularity of random fields, such as Gaussian fields and the stochastic heat equation, is studied in Section~\ref{sec:regularity of random fields}.

\section{Besov spaces with dominating mixed smoothness}\label{sec:Besov spaces}

In this section we introduce Besov spaces with dominating mixed smoothness on the domain $\R \times \mathbb{T}$ with complex valued distributions, based on the corresponding anisotropic generalizations of the Littlewood--Paley theory. We shall present all results for Besov spaces with two-dimensional domains as this allows us to develop the main conceptional ideas and to avoid unnecessarily cumbersome notation. Nevertheless, let us remark that all presented results and their proofs can be modified to cover Besov spaces with higher dimensional domains as well as real valued distributions. Moreover, the extension of the presented results to vector valued distributions can be obtained by defining all operation componentwise.

\smallskip

We begin by setting up some notation. The Euclidean norm in $\R^d$ and the corresponding scalar product are denoted by $|\cdot |$ and $\langle\cdot,\cdot\rangle$, respectively. Analogously, \(\T^d\) is the d-dimensional torus, which we identify with the interval \([-\pi,\pi]^d\) in \(\R^d\). The space of Schwartz functions $f\colon\R^d\to\C$ is denoted by $\mathcal{S}(\R^{d}):=\mathcal{S}(\R^{d},\C)$ and its dual by $\mathcal{S}^{\prime}(\R^{d})$, which is the space of tempered distributions. By abusing the notation, we also denote by \(\langle \cdot, \cdot \rangle\) the dual paring between a Schwartz distribution and a Schwartz function. For $p\in [1,\infty]$ we write $L^p(\R^{d}):=L^p(\R^{d},\C)$ for the Lebesgue space of $p$-integrable functions on $\R^{d}$ with usual norm~$\|\cdot \|_p$. For a function $f\in L^{1}(\R^{d})$ the Fourier transform is defined by
\begin{equation*}
  \mathcal{F}f(\xi ):=\int_{\mathbb{R}^{d}}e^{-i\langle \xi,x\rangle}f(\xi)\dd x,\qquad \xi\in\R^d,
\end{equation*}
and so the inverse Fourier transform is given by $\mathcal{F}^{-1}f(x):=(2\pi)^{-d}\mathcal{F}f(-x)$ for $x\in\R^d$. If $f\in\mathcal{S}^{\prime}(\R^{d})$, then the usual generalization of the Fourier transform is considered. Moreover, we define the support of a distribution \(f \in \mathcal{S}'(\R^d)\) denoted by  \(\supp f\) as the intersection of all closed sets \(A\) with the property
\begin{equation*}
  \varphi \in C_c^\infty(\R^d), \; \; \supp \varphi \subseteq  \R^d \setminus  A  \Rightarrow \langle f, \varphi \rangle = 0 .
\end{equation*}

Furthermore, for measurable functions \(g_{i} \colon \R \times \T \to \C\), \(i=1,2\), \(h \colon \R \times \R \to \C\) we define the following two convolution operators by
\begin{equation*}
  g_1*h(x) = \int_{\R^2} g_1(x-y)h(y) \dd y
  \quad \text{and} \quad
  g_1*_{\pi_2} g_2(x) = \int_{\R \times \T} g_1(x-y)g_2(y) \dd y .
\end{equation*}

We will frequently use the notation $A_{\theta}\lesssim B_{\theta}$, for a generic parameter $\theta$, meaning that $A_{\theta}\le CB_{\theta}$ for some constant $C>0$ independent of $\theta$. We write $A_{\theta}\sim B_{\theta}$ if $A_{\theta}\lesssim B_{\theta}$ and $B_{\theta}\lesssim A_{\theta}$. For integers $j_{\theta},k_{\theta}\in\mathbb{Z}$ we write $j_{\theta}\lesssim k_{\theta}$ if there is some $N\in\N$ such that $j_{\theta}\le k_{\theta}+N$, and $j_{\theta}\sim k_{\theta}$ if $j_{\theta}\lesssim k_{\theta}$ and $k_{\theta}\lesssim j_{\theta}$.

\smallskip

Let us now recall the Littlewood--Paley characterization of isotropic Besov spaces. A dyadic partition of unity $(\chi,\rho)$ in dimension $d$ is given by two smooth functions on $\R^d$ satisfying $\supp\chi\subseteq \{x \in \R^d \, : \, |x| \le \frac{4}{3} \}$, $\supp\rho\subseteq\{x \in\R^d:\frac{3}{4}\le|x|\le\frac{8}{3}\}$ and $\chi(z)+\sum_{j\geq0}\rho(2^{-j}z)=1$ for all $z\in\R^d$. We set
\begin{equation*}
  \rho_{-1}:=\chi\quad\text{and}\quad\rho_{j}:=\rho(2^{-j}\cdot)\quad\text{for }j\ge0.
\end{equation*}
Taking a dyadic partition of unity $(\chi,\rho)$ in dimension two, the \textit{Littlewood--Paley blocks} are defined as
\begin{equation*}
  \Delta_{-1}f:=\mathcal{F}^{-1}(\rho_{-1}\mathcal{F}f)\quad\text{and}\quad\Delta_{j}f:=\mathcal{F}^{-1}(\rho_{j}\mathcal{F}f)\quad\text{for }j\geq0.
\end{equation*}
Note that, by the Paley--Wiener--Schwartz theorem (see e.g. \cite[Theorem~1.2.1]{Triebel1978}), $\Delta_{j}f$ is a smooth function for every $j\geq-1$ and for every $f\in\mathcal{S}^{\prime}(\R^{2})$ we have 
\begin{equation*}
  f=\sum_{j\geq-1}\Delta_{j}f:=\lim_{j\to\infty}S_{j}f\quad\text{with}\quad S_{j}f:=\sum_{i\leq j-1}\Delta_{i}f.
\end{equation*}
For $\alpha\in\mathbb{R}$ and $p,q\in(0,\infty]$ the \textit{isotropic Besov space} is defined as
\begin{align}\label{eq:isotropic Besov norm}
  &B_{p,q}^{\alpha}(\mathbb{R}^{2}):=\Big\{ f\in\mathcal{S}^{\prime}(\mathbb{R}^{2}): |f|_{\alpha,p,q}<\infty\Big\} \nonumber \\
  &\text{with}\quad
  |f|_{\alpha,p,q}:=\Big\|\big(2^{j\alpha}\|\Delta_{j}f\|_{p}\big)_{j\ge-1}\Big\|_{\ell^{q}}.
\end{align}
The class of Besov spaces contains as special cases the Sobolev--Slobodeckij spaces $B_{p,p}^{\alpha}(\mathbb{R}^{2})$, the H{\"o}lder spaces $B_{\infty,\infty}^{\alpha}(\mathbb{R}^{2})$ as well as the Nikolskii spaces $B_{p,\infty}^{\alpha}(\mathbb{R}^{2})$.

\smallskip

In order to introduce Besov spaces with dominating mixed smoothness, we need \textit{Lebesgue spaces of mixed integrability} $\overline{p}=(p_{1},p_{2})\in[1,\infty]^2$, which are given by
\begin{align*}
  &L^{\overline{p}}(\R \times \T) :=\big\{ f \colon \R \times \T \to \C \; \mathrm{is \; measurable \; and} \; \|f\|_{\overline{p}}<\infty\big\}\\
  &\text{with}\quad \|f\|_{\overline{p}} :=\Big(\int_{\R}\Big(\int_{\T}|f(x,y)|^{p_{2}}\dd y\Big)^{p_{1}/p_{2}}\dd x\Big)^{1/p_{1}}.
\end{align*}
We adopt the convention that we view functions \(f \in L^{\overline{p}}(\R \times \T)\) as periodic functions defined on \(\R^2\). This allows us to use the classical Fourier analysis without introducing trigonometrical polynomials. 

On the Lebesgue spaces with mixed integrability the standard inequalities like Minkowski's, H{\"o}lder's and Young's inequality are available.

\begin{lemma}\label{lem: anisotropic_inequalities}
  \;
  \begin{enumerate}
    \item[(i)] For \(\bar p \in[1,\infty]^2 \) and $f,g\in L^{\bar p}(\R \times \T) $ we have
      \begin{equation*}
        \|f+g\|_{\bar p} \le \|f\|_{\bar p} +  \|g\|_{\bar p}.
      \end{equation*}
    \item[(ii)] Let $\bar p,\bar p'\in [1,\infty]^2 $ be such that $1/p_{1}+1/p'_{1}=1$ and $1/p_{2}+1/p'_{2}=1$. If $f\in L^{\bar p}(\R \times \T )$ and $g\in L^{\bar p'}(\R \times \T)$, then $fg\in L^{(1,1)}(\R \times \T)$ and
     \begin{equation*}
       \|fg\|_{(1,1)}\le\|f\|_{\bar p}\|g\|_{\bar p'}.
    \end{equation*}
    \item[(iii)] Let $\bar p,\bar p',\bar r\in [1,\infty]^2 $ be such that \(r_{i} \ge p'_{i} \) and \(1+ 1/r_{i} =  1/p_{i} + 1/p'_{i}\) for \(i=1,2\). If $f\in L^{\bar p}(\R \times \T)$ and $g\in L^{\bar p'}(\R \times \T)$, then
      \begin{equation*}
        \|f*g\|_{\bar r} \le \|f\|_{\bar p} \|g\|_{\bar p'}.
      \end{equation*}
  \end{enumerate}
\end{lemma}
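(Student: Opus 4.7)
The strategy is to reduce each inequality to two successive applications of the corresponding one-dimensional inequality, exploiting the iterated structure of the mixed norm $\|\cdot\|_{\overline{p}}$.

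For (i), I would fix $x\in\R$ and apply the classical Minkowski inequality on $L^{p_2}(\T)$ to bound $\|(f+g)(x,\cdot)\|_{L^{p_2}(\T)}$ by $\|f(x,\cdot)\|_{L^{p_2}(\T)}+\|g(x,\cdot)\|_{L^{p_2}(\T)}$, and then apply Minkowski on $L^{p_1}(\R)$ to the function $x\mapsto\|f(x,\cdot)\|_{L^{p_2}(\T)}$ and its analogue for $g$. The argument for (ii) is completely analogous: for fixed $x$, the one-dimensional H{\"o}lder inequality on $\T$ with exponents $p_2,p_2'$ gives $\|(fg)(x,\cdot)\|_{L^1(\T)}\le\|f(x,\cdot)\|_{L^{p_2}(\T)}\|g(x,\cdot)\|_{L^{p_2'}(\T)}$, and then integration over $x$ combined with H{\"o}lder on $L^{p_1}(\R)$ with exponents $p_1,p_1'$ delivers the result.

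Part (iii) is the main point and requires a bit more care because we must interchange norms and integrals. Writing the convolution as the iterated integral in the two coordinates, I fix $x_1\in\R$ and, for each $y_1$, view $f(x_1-y_1,\cdot)$ and $g(y_1,\cdot)$ as functions of the second variable. The one-dimensional Young inequality in the second coordinate (valid both on $\R$ and on $\T$ under the given index conditions) yields
\begin{equation*}
  \bigl\|f(x_1-y_1,\cdot)\ast g(y_1,\cdot)\bigr\|_{L^{r_2}}\le F(x_1-y_1)\,G(y_1),
\end{equation*}
where $F(x_1):=\|f(x_1,\cdot)\|_{L^{p_2}}$ and $G(x_1):=\|g(x_1,\cdot)\|_{L^{p_2'}}$. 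Applying Minkowski's integral inequality to pull the $L^{r_2}$-norm inside the $y_1$-integral gives
\begin{equation*}
  \bigl\|(f\ast g)(x_1,\cdot)\bigr\|_{L^{r_2}}\le (F\ast G)(x_1).
\end{equation*}
A second application of the one-dimensional Young inequality in the first coordinate, this time with exponents $p_1,p_1',r_1$, finally yields $\|f\ast g\|_{\overline{r}}\le\|F\ast G\|_{L^{r_1}}\le\|F\|_{L^{p_1}}\|G\|_{L^{p_1'}}=\|f\|_{\overline{p}}\|g\|_{\overline{p'}}$.

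The main technical obstacle is (iii): one has to keep track of the fact that convolution in the second variable involves the torus while convolution in the first variable involves the line, and to ensure that Minkowski's integral inequality can indeed be invoked in the intermediate step. Once one observes that the one-dimensional Young inequality is valid on both $\R$ and $\T$ (after identifying periodic functions as in the paper's convention) and that Minkowski's integral inequality applies to the $L^{r_2}$-norm (which is admissible since $r_2\ge 1$), the bookkeeping is routine. Boundary cases $p_i,p_i',r_i\in\{1,\infty\}$ are handled by the usual conventions in Young's inequality.
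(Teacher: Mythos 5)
Your proof is correct and follows exactly the route the paper indicates: the paper omits the proof, remarking only that it "follows straightforwardly by applying the corresponding inequality for the classical Lebesgue spaces," which is precisely the iterated one-dimensional argument (Minkowski, H\"older, and Young in each coordinate, with Minkowski's integral inequality to interchange the inner norm with the outer convolution integral) that you carry out. No gaps; your attention to the torus-versus-line convolution in the second variable and to $r_2\ge 1$ for Minkowski covers the only delicate points.
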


The proofs follow straight forward by applying the corresponding inequality for the classical Lebesgue spaces and are thus omitted here.

\smallskip

Let us introduce the space \(\mathcal{S}_{\pi_2}'(\R^2)\) of Schwartz distributions, which are periodic in the second component. More precisely,
\begin{equation*}
  \mathcal{S}_{\pi_2}'(\R^2)
  := \{ f \in \mathcal{S}'(\R^2) \colon f(x_1,x_2+2k\pi) = f(x_1,x_2) \; \text{for} \; \text{all} \; k \in \Z \},
\end{equation*}
where the equality in the above definition is understood in the sense of distributions. We understand the space of periodic Schwartz distribution as a subspace of Schwartz distributions. Following~\cite[Chapter 3.2.3]{Schmeisser1987} we can identify distributions  defined on \(\R \times \T\) with the space \(\mathcal{S}_{\pi_2}'(\R^2) \). The motivation for identifying functions with their periodic extensions on the entire space lies in the fact that it simplifies various tasks, such as the demonstration of maximal inequalities. As highlighted in~\cite[Remark 3.2.3 and Chapter 3.2.4]{Schmeisser1987}, this approach makes the many proofs more straightforward. A similar rationale applies to the Fourier transform: by considering the Fourier transform on the entire space, we can directly utilize well-established results, rather than working with Fourier coefficients and Fourier series. Ultimately, both representations are equivalent~\cite[Chapter~3.2.1-3.2.3]{Schmeisser1987}. For a dyadic partition of unity $(\chi,\rho)$ in dimension one, $f\in\mathcal{S}_{\pi_2}'(\R^{2})$, $\xi=(\xi_{1},\xi_{2})\in\R^{2}$ and $(j,k)\in\{-1,0,1,2,\dots\}^{2}$ the \emph{anisotropic Littlewood--Paley blocks} are given by
\begin{align*}
  & \Delta_{j}^{1}f:=\F^{-1}\big[\rho_{j}(\xi_{1})\F f(\xi)\big],\quad\Delta_{k}^{2}f:=\F^{-1}\big[\rho_{k}(\xi_{2})\F f(\xi)\big],\quad\Delta_{j,k}f:=\Delta_{j}^{1}\Delta_{k}^{2}f,\\
  & S_{j}^{1}f:=\sum_{i\le j-1}\Delta_{i}^{1}f,\quad S_{k}^{2}f:=\sum_{l\le k-1}\Delta_{j}^{2}f,\quad\text{and}\quad S_{j,k}f:=S_{j}^{1}S_{k}^{2}f,
\end{align*}
cf. \cite[Section~6.2.1]{Bahouri2011}. It will be convenience to write
\begin{equation*}
  \rho_{j}\otimes\rho_{k}(\xi):=\rho_{j}(\xi_{1})\rho_{k}(\xi_{2}),\quad\xi=(\xi_{1},\xi_{2})\in\R^{2}.
\end{equation*}
In particular, note that we have $\Delta_{j,k}f=\F^{-1}[\rho_{j}\otimes\rho_{k}]\ast f$ and
\begin{equation*}
  f=\sum_{j,k\ge-1}\Delta_{j,k}f:=\lim_{L\to\infty}S_{L}^{1}S_{L}^{2}f\quad\mbox{for any }f\in\mathcal{S}'(\R^{2}).
\end{equation*}
Indeed, for any $f\in\mathcal{S}_{\pi_2}'(\R^2)$ and $\phi\in\mathcal{S}(\R^{2})$ we have $\langle f-S_{L}^{1}S_{L}^{2}f,\phi\rangle=\langle f,\phi-S_{L}^{1}S_{L}^{2}\phi\rangle$. It thus suffices to verify $\phi=\lim_{L\to\infty}S_{L}^{1}S_{L}^{2}\phi$ for any Schwartz function $\phi\in\mathcal{S}(\R^{2})$. Since the Fourier transform is an automorphism on $\mathcal{S}(\R^{2})$, the convergence follows from $\lim_{L\to\infty}\sum_{j,k=-1}^{L}(\rho_{j}\otimes\rho_{k})\F\phi=\F\phi$.

Following~\cite[Chapter~2.2]{Schmeisser1987} we define, for parameters
\begin{equation*}
  \overline{\alpha}:=(\alpha_{1},\alpha_{2})\in\R^{2}, \quad \overline{p}=(p_{1},p_{2})\in[1,\infty]^{2}\quad \text{and}\quad \overline{q}=(q_{1},q_{2})\in[1,\infty],
\end{equation*}
the \emph{Besov spaces with dominating mixed smoothness} as
\begin{align}\label{eq:Besov norm anisotropic}
  B_{\bar{p},\bar{q}}^{\bar{\alpha}}(\R \times \T) & :=\Big\{ f\in\mathcal{S}_{\pi_2}'(\R^2):\|f\|_{\overline{\alpha},\overline{p},\overline{q}}<\infty\Big\}\quad\text{with}\quad\nonumber \\
  \|f\|_{\overline{\alpha},\overline{p},\overline{q}} & :=\Big\|\Big(2^{\alpha_{1}j}\big\|\big(2^{\alpha_{2}k}\|\Delta_{j,k}f\|_{\bar p}\big)_{k\ge-1}\big\|_{\ell^{q_{2}}}\Big)_{j\ge-1}\Big\|_{\ell^{q_{1}}}.
\end{align}
For \(f \in \mathcal{S}_{\pi_2}'(\R^2)\) we notice that the dyadic block $\Delta_{j,k}f=\F^{-1}[\rho_{j}\otimes\rho_{k}]\ast f$ are periodic in the second component and therefore it makes sense to take the \(L^{\overline{p}}\)-norm in the second component over the torus \(\T\). Moreover, we can identify \(f \in L^{\bar p}(\R \times \T)\) as a periodic distribution in the canonical way:
\begin{lemma}
  For any \(f \in L^{\bar p}(\R \times \T)\) the distribution
\begin{equation*}
  \mathcal{S}(\R^2)\ni\phi\mapsto\langle f,\varphi \rangle = \int_{\R^2} f(x_1,x_2) \varphi(x_1,x_2) \dd x_2 \dd x_1
\end{equation*}
  is well-defined in $\mathcal{S}_{\pi_2}'(\R^2)$, where $f$ is periodically extended on \(\R \times \R\).
\end{lemma}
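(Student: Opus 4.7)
The plan is to reduce the integral over $\R^2$ to one over the fundamental domain $\R \times \T$ by periodising the test function in the second variable, and then to apply H\"older's inequality (Lemma~\ref{lem: anisotropic_inequalities}(ii)) to obtain both absolute convergence and continuity on $\mathcal{S}(\R^2)$. Periodicity in $x_2$ will then follow from a direct change of variables combined with the periodicity of $f$.

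The first step is to introduce, for $\phi \in \mathcal{S}(\R^2)$, the periodisation
\begin{equation*}
  \Phi(x_1,x_2) := \sum_{k\in\Z} |\phi(x_1, x_2 + 2\pi k)|,
\end{equation*}
which converges absolutely by the rapid decay of Schwartz functions. Using the seminorm $p_N(\phi) := \sup_{x\in\R^2}(1+|x|^2)^N|\phi(x)|$ together with $|x_2 + 2\pi k|\ge \pi|k|$ for $x_2\in[-\pi,\pi]$ and $|k|\ge 1$, I would compare the tail of the series with an integral to produce a pointwise bound of the form
\begin{equation*}
  \Phi(x_1,x_2) \lesssim p_N(\phi)(1+x_1^2)^{1/2-N}, \qquad x_2 \in [-\pi,\pi],
\end{equation*}
valid for every sufficiently large $N$. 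Choosing $N$ large enough, depending on $\bar p' = (p_1', p_2')$, then yields $\Phi \in L^{\bar p'}(\R\times\T)$ with $\|\Phi\|_{\bar p'}\lesssim p_N(\phi)$.

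Next, Tonelli's theorem applied to the $2\pi$-periodic function $|f|$ against $\Phi$, followed by Lemma~\ref{lem: anisotropic_inequalities}(ii), gives
\begin{equation*}
  \int_{\R^2}|f(x_1,x_2)\phi(x_1,x_2)|\dd x = \int_{\R\times\T}|f(x_1,x_2)|\Phi(x_1,x_2)\dd x \lesssim \|f\|_{\bar p}\,p_N(\phi).
\end{equation*}
This shows simultaneously that the defining integral converges absolutely and that $\phi\mapsto\langle f,\phi\rangle$ is a continuous linear functional on $\mathcal{S}(\R^2)$.

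Finally, for periodicity in $x_2$, the substitution $y_2 = x_2 - 2\pi$ and the periodicity of $f$ give
\begin{equation*}
  \langle f,\phi(\cdot,\cdot-2\pi)\rangle = \int_{\R^2} f(x_1,y_2+2\pi)\phi(x_1,y_2)\dd x_1\dd y_2 = \langle f,\phi\rangle,
\end{equation*}
which, by a dual change of variables, is equivalent to $\langle f(\cdot,\cdot+2\pi),\phi\rangle = \langle f,\phi\rangle$ for every test function, so $f\in\mathcal{S}_{\pi_2}'(\R^2)$. The only genuinely delicate step is the pointwise estimate on $\Phi$, where one has to handle the tail sum in $k$ uniformly in $x_1\in\R$; everything else reduces to routine applications of Fubini--Tonelli and H\"older.
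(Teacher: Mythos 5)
Your proposal is correct and rests on the same core mechanism as the paper's proof: reduce the pairing to the fundamental domain $\R\times\T$ via the periodicity of $f$, control the sum over the $2\pi$-translates of $\varphi$ by its rapid decay, and conclude with the mixed-norm H\"older inequality of Lemma~\ref{lem: anisotropic_inequalities}(ii), with periodicity of the resulting distribution following from a change of variables. The only difference is organisational: you fold the test function into the periodisation $\Phi$ and apply H\"older once on $\R\times\T$, which has the small advantage of giving the explicit estimate $|\langle f,\varphi\rangle|\lesssim \|f\|_{\bar p}$ times a single Schwartz seminorm of $\varphi$ (so continuity is immediate), whereas the paper splits $\R^2$ into the bands $\R\times[\pi(2k-1),\pi(2k+1)]$, applies H\"older on each band and sums the resulting $(2k-1)^{-2}$ tail.
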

\begin{proof}
We need to show that the integral is finite. We start by deploying the monotone convergence to obtain 
\begin{align}\label{eq: lp_torus_distr}
  &\int_{\R^2}| f(x_1,x_2) \varphi(x_1,x_2)| \dd x_2 \dd x_1 \nonumber \\
  &\quad= \lim \limits_{m \to \infty} \int_{\R \times [-\pi (2m+1), \pi (2m+1)] } |f(x_1,x_2) \varphi(x_1,x_2)| \dd x_2 \dd x_1 \nonumber \\
  &\quad\le \| f \varphi\|_{(1,1)} + \lim \limits_{m \to \infty} \int_{\R \times \{ [-\pi (2m+1), -\pi ] \cup [\pi, \pi (2m+1)]  \} }| f(x_1,x_2) \varphi(x_1,x_2)| \dd x_2 \dd x_1.
\end{align}
The first term is finite by H{\"o}lder's inequality. For the right integral of the last term we obtain
\begin{align*}
  &\; \int_{\R \times [\pi,\pi (2m+1)]  }| f(x_1,x_2) \varphi(x_1,x_2)| \dd x_2 \dd x_1 \\
  &\quad=\; \sum\limits_{k=1}^m  \int_{\R} \int\limits_{\pi (2k-1)}^{\pi (2k+1)}  |f(x_1,x_2) \varphi(x_1,x_2)| \dd x_2\dd x_1 \\
  &\quad\le \;  \sum\limits_{k=1}^m \int_{\R} \bigg(\int\limits_{\pi (2k-1)}^{\pi (2k+1)}  |f(x_1,x_2)|^{p_2} \dd x_2   \bigg)^{\frac{1}{p_2}} \bigg(\int\limits_{\pi (2k-1)}^{\pi (2k+1)}  |\varphi(x_1,x_2)|^{q_2} \dd x_2    \bigg)^{\frac{1}{q_2}} \dd x_1  \\
  &\quad=\;  \sum\limits_{k=1}^m   \int_{\R} \bigg(\int_{\T}  |f(x_1,x_2)|^{p_2} \dd x_2   \bigg)^{\frac{1}{p_2}} \bigg(\int\limits_{\pi (2k-1) }^{\pi (2k+1)}  |\varphi(x_1,x_2)|^{q_2} \dd x_2   \bigg)^{\frac{1}{q_2}} \dd x_1 \\
  &\quad\le\;  \|f\|_{\bar p}  \sum\limits_{k=1}^m  \bigg( \int_{\R} \bigg(\int\limits_{\pi (2k-1)}^{\pi (2k+1)}  |\varphi(x_1,x_2)|^{q_2} \dd x_2    \bigg)^{\frac{q_1}{q_2}} \dd x_1 \bigg)^{\frac{1}{q_1}},
\end{align*}
where we used H{\"o}lder's inequality with \(1/p_{i} + 1/q_{i} =1 \) for \(i=1,2\). It remains to show that the last integral is summable over~\(k\). Using the fact the \(\varphi\) decays faster than any polynomial we find
\begin{align*}
  &\bigg( \int_{\R} \bigg(\int\limits_{\pi(2k-1)}^{\pi  (2k+1)}  |\varphi(x_1,x_2)|^{q_2}   \dd x_2  \bigg)^{\frac{q_1}{q_2}} \dd x_1 \bigg)^{\frac{1}{q_1}} \\
  &\,\le   \bigg( \int_{\R} \bigg(\int\limits_{\pi (2k-1)}^{\pi (2k+1)} (1+x_1)^{-\frac{3q_2}{q_1}} x_2^{ -3q_2-1} \sup\limits_{x_1,x_2 \in \R}  \big( |\varphi(x_1,x_2) |^{q_2} (1+x_1)^{\frac{3q_2}{q_1}} x_2^{3q_2+1} \big)  \dd x_2  \bigg)^{\frac{q_1}{q_2}}  \dd x_1 \bigg)^{\frac{1}{q_1}} \\
  &\,\le   \bigg( \int_{\R}  (1+x_1)^{-3} \bigg(\int\limits_{\pi (2k-1)}^{\pi (2k+1)}  x_2^{-3q_2-1} \dd x_2   \bigg)^{\frac{q_1}{q_2}} \dd x_1 \bigg)^{\frac{1}{q_1}} \sup\limits_{x_1,x_2 \in \R} \big(|\varphi(x_1,x_2) |^{q_2} (1+x_1)^{\frac{3}{q_1}} x_2^{3+ \frac{1}{q_2}}\big) \\
  &\,\lesssim   (2k-1)^{-2}  \sup\limits_{x_1,x_2 \in \R} \big(|\varphi(x_1,x_2) |^{q_2} (1+x_1)^{\frac{3}{q_1	}} x_2^{3+\frac{1}{q_2}}\big).
\end{align*} 
Hence, we find 
\begin{align*}
  &\int_{\R \times [\pi,\pi (2m+1)]  } f(x_1,x_2) \varphi(x_1,x_2) \dd x_2 \dd x_1 \\
  &\quad\lesssim \sup\limits_{x_1,x_2 \in \R} \big( |\varphi(x_1,x_2) |^{q_2} (1+x_1)^{\frac{3}{q_1	}} x_2^{3+\frac{1}{q_2}} \big) \sum\limits_{k=1}^\infty \frac{1}{(2k-1)^2}
  < \infty.
\end{align*}
A similar argument can be used to bound the remaining integral in~\eqref{eq: lp_torus_distr}. Hence, we have shown the continuity of~\(f\) as a Schwartz distribution. The periodicity in the second component is obvious and therefore we obtain \(f \in \mathcal{S}_{\pi_2}'(\R^2)\).
\end{proof}

This allows us to define the Fourier transformation of a function \(f \in L^{\bar p}(\R \times \T)\) by identifying the function \(f\) as a periodic distribution in the second component. Hence, we have
\begin{equation*}
  \langle \mathcal{F}(f),\varphi \rangle  = \langle f, \mathcal{F}(\varphi) \rangle
\end{equation*}
for \(f \in L^{\bar p}(\R \times \T), \; \varphi \in \mathcal{S}(\R^2)\). We thus understand the support of \(\mathcal{F}(f)\) in the sense of distributions.

\smallskip

The following Bernstein type inequalities have been proved in \cite{Paicu2005,Chemin2007}. In fact, the proof is analogous to the isotropic case.

\begin{lemma}\label{lem:bernstein inequalities}
  Let $\mathcal{B}$ be a ball in $\R$ and $\mathcal{C}$ be an annulus in $\R$. Let $\alpha,\beta\in\R$, $1\le p'_1\le p_{1}\le\infty$ and $1\le p_2'\le p_2\le\infty$. For any $f\in C^{\infty}(\R \times \T) $ we have
  \begin{enumerate}
    \item[(i)] If $f\in  L^{(p'_1,p_2)}(\R \times \T)$, $\supp\F f\subseteq2^{j}\mathcal{B}\times\R$, then $\|\partial_{1}^{\alpha}f\|_{(p_{1},p_{2})}\lesssim 2^{j(|\alpha|+\frac{1}{p'_{1}}-\frac{1}{p_{1}})}\|f\|_{(p'_{1},p_{2})}$.
    \item[(ii)] If $f\in  L^{(p_1,p'_2)}(\R \times \T)$, $\supp\F f\subseteq\R\times 2^{k}\mathcal{B}$, then $\|\partial_{2}^{\beta}f\|_{(p_{1},p_{2})}\lesssim 2^{k(|\beta|+\frac{1}{p'_{2}}-\frac{1}{p_{2}})}\|f\|_{(p_{1},p'_{2})}$.
    \item[(iii)] If $f\in  L^{\bar p}(\R \times \T)$, $\supp\F f\subseteq2^{j}\mathcal{\mathcal{C}}\times\R$, then $\|f\|_{\bar p}\lesssim 2^{-jN}\|\partial_{1}^{N}f\|_{\bar p}$.
    \item[(iv)] If $f\in  L^{\bar p}(\R \times \T)$, $\supp\F f\subseteq\R\times2^{k}\mathcal{C}$, then $\|f\|_{\bar p}\lesssim 2^{-kN}\|\partial_{2}^{N}f\|_{\bar p}$.
  \end{enumerate}
\end{lemma}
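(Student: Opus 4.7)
The overall strategy is to reduce each of the four estimates to a one-dimensional convolution inequality acting only in the variable on which a spectral constraint is imposed, and then to read off the scaling in $2^j$ or $2^k$ from an elementary dilation computation. Since the hypotheses on $\supp\mathcal{F}f$ only constrain one of the two Fourier variables, the natural replacement of the usual Young's inequality is a hybrid estimate combining Minkowski's integral inequality in the unconstrained variable with Young's inequality in the constrained variable.

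\medskip

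Consider first (i). I would fix a smooth $\phi\in C_c^\infty(\R)$ with $\phi\equiv 1$ on a neighbourhood of $\mathcal{B}$, set $\phi_j(\xi_1):=\phi(2^{-j}\xi_1)$, and note that
\[
\mathcal{F}f(\xi_1,\xi_2)=\phi_j(\xi_1)\mathcal{F}f(\xi_1,\xi_2),
\]
so that $f=g_j\ast_1 f$ with $g_j:=\mathcal{F}^{-1}_1[\phi_j]$ and $\ast_1$ denoting partial convolution in $x_1$ only. Differentiation yields $\partial_1^\alpha f=(\partial_1^\alpha g_j)\ast_1 f$, and applying Minkowski's integral inequality inside the $L^{p_2}_{x_2}(\T)$-norm (which passes cleanly because $g_j$ depends only on $x_1$) reduces the estimate to a one-dimensional Young inequality in $x_1$ with exponents $(r,p_1',p_1)$ satisfying $1+1/p_1=1/r+1/p_1'$. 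A scaling computation gives
\[
\|\partial_1^\alpha g_j\|_{L^r(\R)}\sim 2^{j(|\alpha|+1-1/r)}=2^{j(|\alpha|+1/p_1'-1/p_1)},
\]
yielding the claimed bound. Assertion (ii) is obtained by the symmetric argument with $\ast_2$ in place of $\ast_1$; the only new point is that $f$ is periodic in $x_2$ while the kernel $g_k:=\mathcal{F}^{-1}_2[\phi(2^{-k}\cdot)]$ lives on $\R$, but the Minkowski step combined with translation invariance of the $L^{p_2}(\T)$-norm still produces the bound $\|g_k\ast_2 f(x_1,\cdot)\|_{L^{p_2}(\T)}\le \|g_k\|_{L^1(\R)}\|f(x_1,\cdot)\|_{L^{p_2'}(\T)}$ after an elementary $1+1/p_2=1/r+1/p_2'$ Young step.

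\medskip

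For (iii) and (iv) I would use an annular cut-off $\phi\in C_c^\infty(\R\setminus\{0\})$ with $\phi\equiv 1$ on $\mathcal{C}$ and write
\[
\mathcal{F}f(\xi_1,\xi_2)=\frac{\phi(2^{-j}\xi_1)}{(i\xi_1)^N}\cdot(i\xi_1)^N\mathcal{F}f(\xi_1,\xi_2),
\]
the second factor being precisely $\mathcal{F}[\partial_1^N f]$. Setting $h_j:=\mathcal{F}^{-1}_1\bigl[\phi(2^{-j}\cdot)/(i\cdot)^N\bigr]$, this gives $f=h_j\ast_1\partial_1^N f$. A straightforward dilation $\xi_1\mapsto 2^{-j}\xi_1$ shows $h_j(x_1)=2^{j(1-N)}\tilde h(2^j x_1)$ with $\tilde h:=\mathcal{F}^{-1}[\phi(\cdot)/(i\cdot)^N]\in\mathcal{S}(\R)$, whence $\|h_j\|_{L^1(\R)}\sim 2^{-jN}$. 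The same Minkowski+Young step as before in the variable $x_1$ delivers the estimate; (iv) is its mirror image in $x_2$, again using periodicity to interpret the partial convolution as in (ii).

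\medskip

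The main technical obstacle is thus not the multiplier analysis itself but the bookkeeping in the mixed domain $\R\times\T$: one must verify that convolving a periodic (in $x_2$) function with a one-dimensional Schwartz kernel acting only in $x_2$ produces a function whose $L^{p_2}(\T)$-norm is controlled by the $L^1(\R)$-norm of the kernel times the $L^{p_2}(\T)$-norm of the original, independently of how the kernel is spread over $\R$. This follows once one uses Minkowski with respect to the full real line and then invokes the translation invariance of the $L^{p_2}(\T)$-norm, but it is the sole point where the anisotropy between $\R$ and $\T$ in the domain plays a role; all remaining estimates are purely one-dimensional and identical to the classical proof of Bernstein's inequalities as in~\cite{Bahouri2011}.
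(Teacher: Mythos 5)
Your proposal is correct and follows essentially the same route as the paper: the paper's proof (which details only assertion (ii)) also multiplies by a rescaled cut-off whose Fourier transform equals one on the spectral support, reduces to a one-dimensional convolution in the constrained variable, and concludes by a Young-type step whose scaling $2^{k(|\beta|+1/p_2'-1/p_2)}$ comes from the dilation of the Schwartz kernel, exactly as you do. Your treatment of (iii)--(iv) via the annular cut-off divided by $(i\xi_1)^N$ is the standard ``similar argument'' the paper alludes to, and your closing remark on the $\R\times\T$ bookkeeping addresses the same hybrid Young step the paper also uses without further elaboration.
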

\begin{proof}
  For sake of completeness we will prove the second assertion. The other claims follow by a similar argument. Let \(\psi \in \mathcal{S}(\R) \) have compact Fourier support and \(\mathcal{F}(\psi)(\xi_2) = 1 \) for \(\xi_2 \in \mathcal{B}\). Define \(\psi_k(y_2) := 2^k \psi(2^k y_2)\). Then \(\mathcal{F}(\psi_k)(\xi_2) = \mathcal{F}(\psi)(\xi_2/2^k) \) and \( \mathcal{F} (\psi_k)(\xi_2) = 1 \) for \(\xi_2 \in 2^{k}\mathcal{B} \). Furthermore, since \(f\) corresponds to a smooth function once can use dominated convergence theorem to show that \(\partial_2^\beta f\) has Fourier support contained in the same set. Consequently, for \(\varphi \in \mathcal{S}(\R^2)\), then
  \begin{align*}
    &\int_{\R^2}  \partial_2^\beta f(\xi_1,\xi_2) \mathcal{F}(\varphi)(\xi_1,\xi_2) \dd \xi_2 \dd \xi_1 \\
    &\quad=\;   \int_{\R^2}  \mathcal{F}( \varphi \mathcal{F}(\psi_k)(\cdot_2) )(\xi_1,\xi_2)  \partial_2^\beta f(\xi_1,\xi_2) \dd \xi_2 \dd \xi_1  \\
    &\quad=\; \int_{\R^2}  \int_{\R^2} e^{-i(\xi_1x_1+\xi_2x_2)}  \varphi(x_1,x_2) \mathcal{F}(\psi_k)(x_2)
    \partial_2^\beta f(\xi_1,\xi_2) \dd x_2 \dd x_1 \dd \xi_2 \dd \xi_1  \\
    &\quad=\; \int_{\R^2}  \int_{\R^2} \int_{\R} e^{-i(\xi_1x_1+\xi_2x_2)} e^{-ix_2y_2} \psi_k(y_2) \varphi(x_1,x_2)
    \partial_2^\beta f(\xi_1,\xi_2) \dd y_2 \dd x_2 \dd x_1 \dd \xi_2 \dd \xi_1  \\
    &\quad=\; \int_{\R^2}  \int_{\R^2} \int_{\R} e^{-i(\xi_1x_1+\xi_2x_2)} e^{-ix_2y_2} e^{i x_2 \xi_2} \psi_k(y_2-\xi_2) \varphi(x_1,x_2)
    \partial_2^\beta f(\xi_1,\xi_2) \dd y_2 \dd x_2 \dd x_1 \dd \xi_2 \dd \xi_1  \\
    &\quad=\; \int_{\R^2}  \int_{\R^2} e^{-i(\xi_1x_1+x_2y_2)}  \varphi(x_1,x_2) \dd x_2 \dd x_1 \int_{\R}  \psi_k(y_2-\xi_2)
    \partial_2^\beta f(\xi_1,\xi_2)  \dd \xi_2 \dd y_2  \dd \xi_1  \\
    &\quad=\; \int_{\R^2}  \mathcal{F}(\varphi)(\xi_1,y_2) \int_{\R} \partial_2^\beta  \psi_k(y_2-\xi_2)
    f(\xi_1,\xi_2)  \dd \xi_2 \dd y_2  \dd \xi_1 ,
  \end{align*}
  which implies
  \begin{equation*}
    \partial_2^\beta f(x_1,x_2) =  \int_{\R} \partial_2^\beta \psi(x_2-y_2) f(x_1,y_2) \dd y_2
  \end{equation*}
  by the fundamental lemma of calculus of variatuion and since the Fourier transformation is an automorphism on \(\mathcal{S}(\R^2)\). Applying Young's inequality we find
  \begin{align*}
    \| \partial_2^\beta f \|_{(p_1,p_2)} &\le    \bigg( \int_{\R} | \partial_2^\beta \psi_k(y_2)|^{r_2} \dd y_2 \bigg)^{\frac{1}{{r_2}}}   \|f\|_{(p_1,p'_2)}  \\
    &= 2^{k(1+|\beta|)} \bigg( \int_{\R} | (\partial_2^\beta \psi)(2^ky_2)|^{r_2} \dd y_2 \bigg)^{\frac{1}{{r_2}}}    \|f\|_{(p_1,p'_2)} \\
    &= 2^{k(1+|\beta|-1/{r_2})} \bigg( \int_{\R} | (\partial_2^\beta \psi)(y_2)|^{r_2} \dd y_2 \bigg)^{\frac{1}{{r_2}}}    \|f\|_{(p_1,p'_2)} \\
    &= 2^{k(|\beta|+\frac{1}{p'_{2}}-\frac{1}{p_{2}})} \bigg( \int_{\R} | \partial_2^\beta \psi(y_2)|^{r_2} \dd y_2 \bigg)^{\frac{1}{{r_2}}}    \|f\|_{(p_1,p'_2)} \\
  \end{align*}
  with \(\frac{1}{r_2} = 1- \frac{1}{p'_2} + \frac{1}{p_2}\). The claim follows from the fact that \(\psi \in \mathcal{S}(\R)\) and therefore every derivative is \(L^{r_2}\) integrable	.
\end{proof}

\subsection{Elementary properties}

As immediate corollaries of Lemma~\ref{lem:bernstein inequalities} and the definition of the Besov spaces with dominating mixed smoothness, we obtain the following embedding results and the subsequent lifting property.

\begin{lemma}\label{lem:embeddings}
  For $i=1,2$ and $j=1,2,3$ let $\alpha_{i},\beta_{i}\in\R$ with $\beta_{i}<\alpha_{i}$, $1 \le p_{j},q_{j}$ with $\overline{p}=(p_{1},p_{2}) \in [1, \infty]^2$ and $\overline{q}=(q_{1},q_{2}) \in [1, \infty]^2$. The following are continuous embeddings:
  \begin{enumerate}
    \item[(i)] $B_{\bar{p},(q_{3},q_{2})}^{(\alpha_{1},\alpha_{2})}(\R \times \T )\subseteq B_{\bar{p},(q_{1},q_{2})}^{(\beta_{1},\alpha_{2})}(\R \times \T)$ and $B_{\bar{p},(q_{1},q_{3})}^{(\alpha_{1},\alpha_{2})}(\R \times \T)\subseteq B_{\bar{p},(q_{1},q_{2})}^{(\alpha_{1},\beta_{2})}(\R \times \T )$,
    \item[(ii)] $B_{\bar{p},\bar{q}}^{(\alpha_{1},\alpha_{2})}(\R \times \T )\subseteq L^{\bar p}(\R \times \T )$ for any $(\alpha_{1},\alpha_{2})\in(0,\infty)^{2}$,
    \item[(iii)] $B_{(p_{3},p_{2}),\bar{q}}^{(\alpha_{1}+(\frac{1}{p_{3}}-\frac{1}{p_{1}}),\alpha_{2})}(\R \times \T)\subseteq B_{(p_{1},p_{2}),\bar{q}}^{(\alpha_{1},\alpha_{2})}(\R \times \T)$ if $p_{3}\le p_{1}$,
    \item[(iv)] $B_{(p_{1},p_{3}),\bar{q}}^{(\alpha_{1},\alpha_{2}+(\frac{1}{p_{3}}-\frac{1}{p_{2}}))}(\R \times \T)\subseteq B_{(p_{1},p_{2}),\bar{q}}^{(\alpha_{1},\alpha_{2})}(\R \times \T)$ if $p_{3}\le p_{2}$.
  \end{enumerate}
\end{lemma}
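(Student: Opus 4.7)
The plan is to reduce each of the four claims to a blockwise inequality on $\Delta_{j,k}f$, followed by summation in $(j,k)$ with the appropriate weights.

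For (i), I would write $2^{\beta_1 j} = 2^{(\beta_1-\alpha_1)j}\cdot 2^{\alpha_1 j}$ and exploit that $2^{(\beta_1-\alpha_1)j}$ is geometrically decaying in $j \ge -1$ since $\beta_1 < \alpha_1$. For any nonnegative sequence $(a_j)$, H\"older's inequality yields $\|(2^{\beta_1 j} a_j)\|_{\ell^{q_1}} \lesssim \|(2^{\alpha_1 j} a_j)\|_{\ell^{q_3}}$ when $q_1 \le q_3$, while when $q_1 > q_3$ the same bound follows by combining the elementary embedding $\ell^{q_3} \hookrightarrow \ell^{q_1}$ with the boundedness of the decaying factor. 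Choosing $a_j = \|(2^{\alpha_2 k}\|\Delta_{j,k}f\|_{\bar p})_{k\ge-1}\|_{\ell^{q_2}}$ delivers the first embedding in (i); the second is exactly symmetric in the indices $j$ and $k$.

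For (ii), I would use $f = \sum_{j,k\ge-1}\Delta_{j,k}f$ (convergence in $\mathcal{S}_{\pi_2}'$ was already established in Section~\ref{sec:Besov spaces}). Lemma~\ref{lem: anisotropic_inequalities}(i) applied to partial sums, followed by two H\"older estimates against the conjugate exponents $q_1',q_2'$, gives
\begin{equation*}
  \|f\|_{\bar p}\le\sum_{j,k\ge-1}\|\Delta_{j,k}f\|_{\bar p}\lesssim\Big(\sum_j 2^{-\alpha_1 q_1' j}\Big)^{1/q_1'}\Big(\sum_k 2^{-\alpha_2 q_2' k}\Big)^{1/q_2'}\|f\|_{\bar\alpha,\bar p,\bar q},
\end{equation*}
and both geometric sums converge because $\alpha_1,\alpha_2>0$. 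The very same tail bound shows that $(S_L^1 S_L^2 f)_L$ is Cauchy in $L^{\bar p}$, so the limit exists there and agrees with $f$; this is the only mildly delicate step.

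Parts (iii) and (iv) are direct consequences of the Bernstein inequality (Lemma~\ref{lem:bernstein inequalities}). Since $\Delta_{j,k}f$ has Fourier support in $2^j\mathcal{B}\times\R$ for a suitable ball $\mathcal{B}$ (containing both the annuli arising for $j\ge 0$ and the central ball for $j=-1$), Lemma~\ref{lem:bernstein inequalities}(i) with $\alpha=0$ and $p_1'\leftarrow p_3$ yields $\|\Delta_{j,k}f\|_{(p_1,p_2)}\lesssim 2^{j(1/p_3-1/p_1)}\|\Delta_{j,k}f\|_{(p_3,p_2)}$. Multiplying by $2^{\alpha_1 j+\alpha_2 k}$, so that the extra factor is absorbed into the shifted smoothness $\alpha_1+(1/p_3-1/p_1)$, and then taking the $\ell^{q_2}$-then-$\ell^{q_1}$ norm proves (iii). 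The argument for (iv) is identical, with Lemma~\ref{lem:bernstein inequalities}(ii) providing the analogous estimate in the second variable.
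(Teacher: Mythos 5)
Your proposal is correct and takes essentially the same route as the paper: H\"older's inequality combined with the monotonicity of the $\ell^q$-scale for (i), the triangle inequality with geometric summation of the blocks for (ii), and the Bernstein inequalities of Lemma~\ref{lem:bernstein inequalities} applied blockwise for (iii) and (iv). The only (welcome) addition is that you make the $L^{\bar p}$-convergence of the Littlewood--Paley partial sums explicit in (ii), a point the paper leaves implicit.
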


\begin{proof}
  (i): First, we observe that $(2^{j(\beta_{1}-\alpha_{1})})_{j\ge-1}\in \ell^{(1-1/q_3)^{-1}}$, which implies
  \begin{equation*}
    B_{\bar{p},(q_{3},q_{2})}^{(\alpha_{1},\alpha_{2})}(\R \times \T)\subseteq B_{\bar{p},(1,q_{2})}^{(\beta_{1},\alpha_{2})}(\R \times \T).
  \end{equation*}
  Moreover, the monotonicity of the \(l^q\) spaces, i.e. \(l^q \subseteq l^{\widetilde{q}}\) for \(1\le q \le \widetilde{q}\), proves the first embedding. The second embedding can be obtained analogously.
 
  (ii): Since $\|f\|_{\bar{p}}\le\sum_{j,k}\|\Delta_{j,k}f\|_{\bar{p}}=\|f\|_{(0,0),(p_{1},p_{2}),(1,1)}$ for any $\bar{p}\in[1,\infty]^{2}$, we deduce from~(i) that $B_{\bar{p},\bar{q}}^{\bar{\alpha}}(\R \times \T)\subseteq B_{\bar{p},(1,1)}^{(0,0)}(\R \times \T)\subseteq L^{\overline{p}}(\R \times \T)$.

  (iii) and (iv): Note that due to $\supp\F(\Delta_{j,k}f)\subseteq2^{j}\mathcal{B}\times2^{k}\mathcal{B}$ and $\Delta_{j,k}f $ smooth, Lemma~\ref{lem:bernstein inequalities} (i) and (ii) yield
  \begin{equation*}
    \|\Delta_{j,k}f\|_{(p_{1},p_{2})}=2^{j(\frac{1}{p_{3}}-\frac{1}{p_{1}})}\|\Delta_{j,k}f\|_{(p_{3},p_{2})}\quad\mbox{and}\quad\|\Delta_{j,k}f\|_{(p_{1},p_{2})}\lesssim2^{k(\frac{1}{p_{3}}-\frac{1}{p_{2}})}\|\Delta_{j,k}f\|_{(p_{1},p_{3})}.\qedhere
  \end{equation*}
\end{proof}

\begin{lemma}%\label{lem:derivative}
  Let $\overline{\alpha}=(\alpha_{1},\alpha_{2})\in\R^2$, $\overline{p} = (p_{1},p_{2}) \in[1,\infty]^2$ and $\overline{q} =(q_1,q_2) \in [1, \infty]^2$. If $f\in B_{\overline{p},\overline{q}}^{\overline{\alpha}}(\R \times \T)$, then $\partial_{x_1}f\in B_{\overline{p},\overline{q}}^{(\alpha_{1}-1,\alpha_{2})}(\R \times \T)$ and $\partial_{x_2}f\in B_{\overline{p},\overline{q}}^{(\alpha_{1},\alpha_{2}-1)}(\R \times \T)$.
\end{lemma}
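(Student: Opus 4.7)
The plan is to reduce the claim to a direct application of the Bernstein inequalities from Lemma~\ref{lem:bernstein inequalities} by first verifying that partial derivatives commute with the anisotropic Littlewood--Paley blocks. Specifically, since $\F^{-1}[\rho_{j}\otimes\rho_{k}]$ is a Schwartz function and differentiation distributes through convolution with smooth functions (in the distributional sense), we have
\begin{equation*}
  \Delta_{j,k}(\partial_{x_{1}} f)
  = \F^{-1}[\rho_{j}\otimes\rho_{k}]\ast \partial_{x_{1}} f
  = \partial_{x_{1}}\big(\F^{-1}[\rho_{j}\otimes\rho_{k}]\ast f\big)
  = \partial_{x_{1}}(\Delta_{j,k}f),
\end{equation*}
and analogously for $\partial_{x_{2}}$. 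One should also observe that $\partial_{x_{i}}f \in \mathcal{S}'_{\pi_{2}}(\R^{2})$, which is immediate because translations in $x_{2}$ commute with $\partial_{x_{i}}$ on distributions.

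Next I would use that $\Delta_{j,k}f$ is smooth (by the Paley--Wiener--Schwartz theorem) with $\supp\F(\Delta_{j,k}f)\subseteq 2^{j}\mathcal{B}\times 2^{k}\mathcal{B}$. Applying the Bernstein inequality in Lemma~\ref{lem:bernstein inequalities}~(i) with $p_{1}'=p_{1}$ and $\alpha=1$ yields
\begin{equation*}
  \|\Delta_{j,k}(\partial_{x_{1}}f)\|_{\bar p}
  = \|\partial_{x_{1}}\Delta_{j,k}f\|_{\bar p}
  \lesssim 2^{j}\|\Delta_{j,k}f\|_{\bar p},
\end{equation*}
uniformly in $j,k\geq -1$ (the case $j=-1$ causes no trouble since the support still sits in a ball of uniformly bounded radius and the inequality holds with a constant). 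Multiplying by $2^{(\alpha_{1}-1)j}2^{\alpha_{2}k}$ gives
\begin{equation*}
  2^{(\alpha_{1}-1)j}2^{\alpha_{2}k}\|\Delta_{j,k}(\partial_{x_{1}}f)\|_{\bar p}
  \lesssim 2^{\alpha_{1}j}2^{\alpha_{2}k}\|\Delta_{j,k}f\|_{\bar p}.
\end{equation*}

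Finally, taking the $\ell^{q_{2}}$-norm in $k$ followed by the $\ell^{q_{1}}$-norm in $j$ on both sides and invoking the definition \eqref{eq:Besov norm anisotropic} produces
\begin{equation*}
  \|\partial_{x_{1}}f\|_{(\alpha_{1}-1,\alpha_{2}),\bar p,\bar q}
  \lesssim \|f\|_{\bar\alpha,\bar p,\bar q},
\end{equation*}
which gives $\partial_{x_{1}}f\in B^{(\alpha_{1}-1,\alpha_{2})}_{\bar p,\bar q}(\R\times\T)$. The statement for $\partial_{x_{2}}$ is proved identically, using Lemma~\ref{lem:bernstein inequalities}~(ii) with $p_{2}'=p_{2}$ and $\beta=1$, which produces the gain $2^{k}$ and thereby shifts $\alpha_{2}$ to $\alpha_{2}-1$. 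There is no real obstacle: the commutation of $\partial_{x_{i}}$ with $\Delta_{j,k}$ is routine, and the core of the proof is a single application of the Bernstein inequality at each dyadic scale.
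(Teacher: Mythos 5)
Your proof is correct and follows essentially the same route as the paper: commute $\partial_{x_i}$ with the blocks $\Delta_{j,k}$, apply the Bernstein inequality of Lemma~\ref{lem:bernstein inequalities}~(i) (resp.~(ii)) at each dyadic scale, and then take the weighted $\ell^{q_2}$- and $\ell^{q_1}$-norms. The only difference is that you argue directly for arbitrary $f\in B^{\overline{\alpha}}_{\overline{p},\overline{q}}(\R\times\T)$, whereas the paper first establishes the estimate for smooth compactly supported $f$ and then extends by continuity; your direct version is equally valid and even avoids the density step.
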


\begin{proof}
  Let $f$ be a smooth function with compact support and recall
  \begin{equation*}
    \|\partial_{x_1}f\|_{(\alpha_1-1,\alpha_2),\overline{p},\overline{q}}
    =\Big\|\Big(2^{(\alpha_{1}-1)j}\big\|\big(2^{\alpha_{2}k}\|\Delta_{j,k}(\partial_{x_1}f)\|_{p_{1},p_{2}}\big)_{k\ge-1}\big\|_{\ell^{q_{2}}}\Big)_{j\ge-1}\Big\|_{\ell^{q_{1}}}.
  \end{equation*}
  First note that
  \begin{align*}
    \Delta_{j,k}(\partial_{x_1}f)
    &= \mathcal{F}^{-1} \big(\rho_j(\xi_1)\rho_k(\xi_2)\mathcal{F}(\partial_{x_1}f)\big)\\
    &= \mathcal{F}^{-1} (\rho_j(\xi_1)\rho_k(\xi_2))*(\partial_{x_1}f)
    = \partial_{x_1} \big(\mathcal{F}^{-1} (\rho_j(\xi_1)\rho_k(\xi_2))*f\big)
    =\partial_{x_1}  \Delta_{j,k}(f),
  \end{align*}
  where we used the properties of the convolution operator. Since $\supp  \mathcal{F} ( \Delta_{j,k}(f) ) \subseteq 2^j B \times \R$ for some ball $B\subseteq \R$, Lemma~\ref{lem:bernstein inequalities}~(i) yields
  \begin{equation*}
    \|\Delta_{j,k}(\partial_{x_1}f)\|_{p_{1},p_{2}} = \|\partial_{x_1}\Delta_{j,k}(f)\|_{p_{1},p_{2}}
    \lesssim 2^j \| \Delta_{j,k}(f)\|_{p_{1},p_{2}}.
  \end{equation*}
  Hence, we get
  \begin{equation*}
    \|\partial_{x_1}f\|_{(\alpha_1-1,\alpha_2),\overline{p},\overline{q}}
    \lesssim \Big\|\Big(2^{\alpha_{1}j}\big\|\big(2^{\alpha_{2}k}\|\Delta_{j,k}(f)\|_{p_{1},p_{2}}\big)_{k\ge-1}\big\|_{\ell^{q_{2}}}\Big)_{j\ge-1}\Big\|_{\ell^{q_{1}}}
    =  \| f\|_{(\alpha_1,\alpha_2),\overline{p},\overline{q}}
  \end{equation*}
  and thus $f\mapsto \partial_{x_1}f$ is a bounded and linear operator on $C_c^\infty$, which has a continuous extension from $C_c^\infty(\R \times \T)$ to $B_{\overline{p},\overline{q}}^{\overline{\alpha}}(\R \times \T)$.
\end{proof}

For the regularity analysis of the Gaussian fields in Section~\ref{sec:regularity of random fields}, the following generalisation of Young's inequality for convolutions will be useful.

\begin{proposition}%\label{prop:Convolution}
  Let $\alpha_{1},\alpha_{2},\beta_{1},\beta_{2}\in\R$ and $p_{1},p_{2},q_{1},q_{2}\in[1,\infty]$ satisfying
  \begin{equation*}
    0\le\frac{1}{p}:=\frac{1}{p_{1}}+\frac{1}{p_{2}}-1\le 1
    \quad\text{and}\quad
    0\le\frac{1}{q}:=\frac{1}{q_{1}}+\frac{1}{q_{2}}-1\le 1.
  \end{equation*}
  Then, for any $f\in B_{(p_{1},p_{1}),(q_{1},q_{1})}^{(\alpha_{1},\alpha_{2})}(\R \times \T)$ and $g\in B_{(p_{2},p_{2}),(q_{2},q_{2})}^{(\beta_{1},\beta_{2})}(\R \times \T)$ we have
  \begin{equation*}
    f\ast_{\pi_2} g\in B_{(p,p),(q,q)}^{(\alpha_{1}+\beta_{1},\alpha_{2}+\beta_{2})}(\R \times \T)
  \end{equation*}
  with
  \begin{equation*}
    \|f\ast_{\pi_2} g\|_{(\alpha_{1}+\beta_{1},\alpha_{2}+\beta_{2}),(p,p),(q,q)}
    \lesssim\|f\|_{(\alpha_{1},\alpha_{2}),(p_{1},p_{1}),(q_{1},q_{1})}
    \|g\|_{(\beta_{1},\beta_{2}),(p_{2},p_{2}),(q_{2},q_{2})}.
  \end{equation*}
\end{proposition}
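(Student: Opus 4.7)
The plan is to combine a Littlewood--Paley decomposition of $f *_{\pi_2} g$ with the key observation that, in contrast to the paraproduct story for pointwise products, Fourier supports \emph{intersect} under convolution. Writing $f=\sum_{j,k\ge-1}\Delta_{j,k}f$ and $g=\sum_{l,m\ge-1}\Delta_{l,m}g$, I obtain
\begin{equation*}
  \Delta_{n,r}(f*_{\pi_2}g)\;=\;\sum_{j,k,l,m\ge-1}\Delta_{n,r}\bigl(\Delta_{j,k}f*_{\pi_2}\Delta_{l,m}g\bigr).
\end{equation*}
Since convolution on $\R\times\T$ corresponds to multiplication of Fourier transforms on the dual $\R\times\Z$, the Fourier support of $\Delta_{j,k}f*_{\pi_2}\Delta_{l,m}g$ is contained in the intersection of the two dyadic annuli supports in each coordinate. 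Applying $\Delta_{n,r}$ therefore kills the summand unless $|n-j|, |n-l|, |r-k|, |r-m|\le N$ for some universal $N$, so only a bounded number of quadruples $(j,k,l,m)$ contribute for each fixed $(n,r)$.

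Next, on each surviving summand I apply Young's inequality for mixed Lebesgue spaces (Lemma~\ref{lem: anisotropic_inequalities}(iii), used for $*_{\pi_2}$; this is just the $\T$-version of the same one-dimensional estimate used there) with $1+1/p=1/p_{1}+1/p_{2}$, yielding
\begin{equation*}
  \|\Delta_{j,k}f*_{\pi_2}\Delta_{l,m}g\|_{(p,p)}\;\lesssim\;\|\Delta_{j,k}f\|_{(p_{1},p_{1})}\|\Delta_{l,m}g\|_{(p_{2},p_{2})}.
\end{equation*}
Because the indices differ from $(n,r)$ by $O(1)$, the weight $2^{n(\alpha_{1}+\beta_{1})+r(\alpha_{2}+\beta_{2})}$ can be redistributed as $2^{j\alpha_{1}+k\alpha_{2}}\cdot 2^{l\beta_{1}+m\beta_{2}}$ at the cost of a harmless constant. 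Setting
\begin{equation*}
  A_{j,k}:=2^{j\alpha_{1}+k\alpha_{2}}\|\Delta_{j,k}f\|_{(p_{1},p_{1})},\qquad B_{l,m}:=2^{l\beta_{1}+m\beta_{2}}\|\Delta_{l,m}g\|_{(p_{2},p_{2})},
\end{equation*}
I get
\begin{equation*}
  2^{n(\alpha_{1}+\beta_{1})+r(\alpha_{2}+\beta_{2})}\|\Delta_{n,r}(f*_{\pi_2}g)\|_{(p,p)}\;\lesssim\;\sum_{\substack{|j-n|,|l-n|\le N\\ |k-r|,|m-r|\le N}}A_{j,k}\,B_{l,m}.
\end{equation*}

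The main delicate point is then taking $\ell^{q}\otimes\ell^{q}$ norms in $(n,r)$: the hypotheses give $1/q=1/q_{1}+1/q_{2}-1$, which is a Young rather than a H\"older exponent, so a naive application of H\"older in sequence spaces fails. I would resolve this by observing that the right-hand side above is a finite-window convolution of the sequences $(A_{j,k})$ and $(B_{l,m})$ in each index, so discrete Young's inequality absorbs the windows with constants depending only on $N$. A standard H\"older estimate in sequence spaces then produces a bound in the iterated $\ell^{q_{h}}$-norm with $1/q_{h}:=1/q_{1}+1/q_{2}$, and finally the sequence-space embedding $\ell^{q_{h}}\subset \ell^{q}$, valid because the hypothesis forces $q\ge q_{h}$, upgrades the estimate to the iterated $\ell^{q}$-norm. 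Chaining these yields
\begin{equation*}
  \|f*_{\pi_2}g\|_{(\alpha_{1}+\beta_{1},\alpha_{2}+\beta_{2}),(p,p),(q,q)}\;\lesssim\;\|f\|_{(\alpha_{1},\alpha_{2}),(p_{1},p_{1}),(q_{1},q_{1})}\,\|g\|_{(\beta_{1},\beta_{2}),(p_{2},p_{2}),(q_{2},q_{2})},
\end{equation*}
as required. The hardest part is this last $\ell^{q_{h}}\hookrightarrow\ell^{q}$ embedding together with the careful bookkeeping of iterated mixed $\ell^{q_{1}},\ell^{q_{2}},\ell^{q_{h}},\ell^{q}$-norms in two indices simultaneously; the Fourier-support reduction and Young's inequality in the spatial variables are routine once the decomposition is in place.
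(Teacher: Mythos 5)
Your proposal is correct, and it reaches the estimate by a decomposition that is genuinely different from the paper's, although the underlying ingredients coincide. The paper never decomposes $g$: it splits the dyadic cutoff into square roots, writing $\Delta_{j,k}(f\ast_{\pi_2}g)=\F^{-1}[(\rho_{j}\otimes\rho_{k})^{1/2}\F f]\ast\F^{-1}[(\rho_{j}\otimes\rho_{k})^{1/2}\F g]$, applies mixed-norm Young to this single product, and then reduces matters to the auxiliary estimate \eqref{eq:PrConv}, which says that replacing $\Delta_{j,k}$ by the square-root multiplier does not change the Besov norm; that estimate is proved by exactly the near-diagonal window argument you employ (decompose $f$ into blocks, observe that only a $3\times 3$ window contributes, redistribute the weights). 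You instead decompose both factors and use that Fourier supports intersect rather than add under convolution, so for each $(n,r)$ only $O(1)$ quadruples $(j,k,l,m)$ survive; after mixed-norm Young and weight redistribution this yields the same windowed bilinear sum directly, and it makes explicit why no paraproduct-type off-diagonal terms appear, at the price of a double series (which is harmless, since the support restriction makes it locally finite for each fixed $(n,r)$; like the paper, you treat the definedness of $f\ast_{\pi_2}g$ and the interchange with the series informally, so this is not a gap relative to the paper's own proof). Your handling of the sequence exponents is in fact more explicit than the paper's: H\"older gives the iterated $\ell^{q_h}$ bound with $1/q_h=1/q_1+1/q_2$ (a quasi-norm if $1/q_1+1/q_2>1$, but H\"older and the monotonicity of the $\ell$-scales still hold there), and the hypothesis $1/q=1/q_1+1/q_2-1\le 1/q_h$ lets you relax to $\ell^{q}$; the paper leaves this step implicit, and its final display even conflates an $\ell^{1}$-type sum with the mixed $\ell^{q}$-norm. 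In effect you prove the slightly stronger statement with exponent $q_h$ in place of the Young exponent $q$, from which the proposition follows by embedding.
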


\begin{proof}
  The Littlewood--Paley blocks of the convolution satisfy for $j,k\ge-1$
  \begin{equation*}
    \Delta_{j,k}(f\ast_{\pi_2} g)
    =\F^{-1}\big[(\rho_{j}\otimes\rho_{k})\F f\F g\big]
    =\F^{-1}[(\rho_{j}\otimes\rho_{k})^{1/2}\F f]\ast\F^{-1}[(\rho_{j}\otimes\rho_{k})^{1/2}\F g].
  \end{equation*}
  Using Young's inequality for convolutions, we bound
  \begin{align*}
    2^{j(\alpha_{1}+\beta_{1})+k(\alpha_{2}+\beta_{2})}\|\Delta_{j,k}(f\ast_{\pi_2} g)\|_{(p,p)}
    \leq & \big(2^{j\alpha_{1}+k\alpha_{2}}\|\F^{-1}[(\rho_{j}\otimes\rho_{k})^{1/2}\F f]\|_{p_{1}}\big)\\
    & \times\big(2^{j\beta_{1}+k\beta_{2}}\|\F^{-1}[(\rho_{j}\otimes\rho_{k})^{1/2}\F g]\|_{p_{2}}\big).
  \end{align*}
  Hence, by the H{\"o}lder's inequality it suffices to show
  \begin{equation}\label{eq:PrConv}
    \Big\|\Big(2^{j\alpha_{1}} \big\|\big(2^{k\alpha_{2}} \|\F^{-1}[(\rho_{j}\otimes\rho_{k})^{1/2}\F f]\|_{p_{1}}\big)_{{k\geq -1}}\big\|_{\ell^{q_1}}\Big)_{
    _{j\geq -1}}\Big\|_{\ell^{q_2}}
    \lesssim\|f\|_{(\alpha_{1},\alpha_{2}),(p_{1},p_1),(q_1,q_2)}
  \end{equation}
  (and consequently the analogous estimate holds true for $g$). To verify \eqref{eq:PrConv}, we decompose $f=\sum_{j,k}\Delta_{j,k}f$. Due to the compact support of $\rho$ and Young's inequality we obtain
  \begin{align*}
    & 2^{j\alpha_{1}+k\alpha_{2}}\|\F^{-1}[(\rho_{j}\otimes\rho_{k})^{1/2}\F f]\|_{p_{1}}\\
    & \quad\le2^{j\alpha_{1}+k\alpha_{2}}\sum_{j',k'}\big\|\F^{-1}\big[(\rho_{j}\otimes\rho_{k})^{1/2}\F[\Delta_{j',k'}f]\big]\big\|_{p_{1}}\\
    & \quad\le 2^{j\alpha_{1}+k\alpha_{2}}\sum_{j'=j-1}^{j+1} \sum_{k' = k-1}^{k+1}\|\F^{-1}[(\rho_{j}\otimes\rho_{k})^{1/2}]\|_{1}\|\Delta_{j',k'}f\|_{p_{1}}\\
    & \quad\lesssim \sum_{j'=j-1}^{j+1} \sum_{k' = k-1}^{k+1} 2^{-(j'-j)\alpha_{1}}  2^{-(k'-k)\alpha_{2}} \big(2^{j'\alpha_{1}+k'\alpha_{2}}\|\Delta_{j',k'}f\|_{p_{1}}\big) \\
    & \quad\lesssim \sum_{j'=j-1}^{j+1} \sum_{k' = k-1}^{k+1} 2^{j\alpha_{1}+k\alpha_{2}} \|\Delta_{j',k'}f\|_{p_{1}} .
  \end{align*}
  Taking the $\ell^{q_1}$- and $\ell^{q_2}$-norm, we conclude
  \begin{align*}
   \Big\|\Big(2^{j\alpha_{1}} \big\|\big(2^{k\alpha_{2}} \|\F^{-1}[(\rho_{j}\otimes\rho_{k})^{1/2}\F f]\|_{p_{1}}\big)_{{k\geq -1}}\big\|_{\ell^{q_1}}\Big)_{
    _{j\geq -1}}\Big\|_{\ell^{q_2}}
   &\lesssim \sum_{j=-1}^{\infty} \sum_{k =- 1}^{\infty} 2^{j\alpha_{1}+k\alpha_{2}} \|\Delta_{j,k}f\|_{p_{1}} \\
    &=\|f\|_{(\alpha_{1},\alpha_{2}),(p_{1},p_{1}),(q_1,q_2)}.\qedhere
  \end{align*}
\end{proof}

\section{Characterization via differences}\label{sec:integral characterisations}

In this section we derive two equivalent integral representations of the Besov norm with dominating mixed smoothness from \eqref{eq:Besov norm anisotropic}. To give the classical background, let us recall corresponding characterization for isotropic Besov spaces. It is well-known that on the isotropic Besov spaces $B^\alpha_{p,q}(\R^2)$, for \(\alpha \in (0,1)\) and \(p,q \ge 1 \),  the norm $|f|_{\alpha,p,q}$ from \eqref{eq:isotropic Besov norm} and the two norms
\begin{align*}
  |f|_{\alpha,p,q}^{(1)}&:= \|f\|_{p} + \Big( \int_{\R} |h|^{-\alpha q } \sup\limits_{|r| \le |h|} \|f(\cdot+r) -f(\cdot) \|_{p}^q \frac{\dd h}{|h|^2}  \Big)^{1/q},\\
  |f|_{\alpha,p,q}^{(2)}&:= \|f\|_{p} + \Big( \int_{\R} |h|^{-\alpha q }  \|f(\cdot+r) -f(\cdot) \|_{p}^q \frac{\dd h}{|h|^2}  \Big)^{1/q}
\end{align*}
are all equivalent, for \(f \in B_{p,q}^{\alpha}(\mathbb{R}^{2})\), see e.g. \cite[Theorem~2.5.12]{Triebel1978}.

\smallskip

To obtain the analogous result in the case of Besov spaces with dominating mixed smoothness, we define the rectangular increments
\begin{equation*}
  \square_{h}f(x):=f(x_{1}+h_{1},x_{2}+h_{2})-f(x_{1},x_{2}+h_{2})-f(x_{1}+h_{1},x_{2})+f(x_{1},x_{2})
\end{equation*}
and the directional increments 
\begin{align*}
  \delta_{h_1}f(x) &:= f(x_1+h_1,x_2)-f(x_1,x_2), \\
  \delta_{h_2}f(x) &:= f(x_1,x_2+h_2)-f(x_1,x_2)
\end{align*}
for $h=(h_{1},h_{2})$, $x=(x_{1},x_{2})\in\R^2$.

\begin{definition}\label{def: characterisation of Besov spaces}
  For $\overline{\alpha}=(\alpha_{1},\alpha_{2})\in(0,1)^{2}$, $\overline{p}=(p_{1},p_{2})\in[1,\infty]^{2}$ and $\overline{q}=(q_{1},q_{2})\in[1,\infty]^2$ we define
  \begin{align*}
    \|f\|_{\overline{\alpha},\overline{p},\overline{q}}^{(1)}  :=& \, \|f\|_{\overline{p}}  +  \Big( \int_{\R} h_{1}^{-\alpha_{1}q_{1}}  \sup\limits_{|r_1| \le |h_1|} \|\delta_{r_{1}}^1 f\|_{{\overline{p}}}^{q_{1}}\frac{\dd h_{1}}{|h_{1}|}  \Big)^{\frac{1}{q_1}}  + \Big( \int_{\T} h_2^{-\alpha_{2}q_{2}}
    \sup\limits_{|r_2| \le |h_2|} \|\delta_{r_{2}}^2 f\|_{\overline{p}}^{q_{2}}\frac{\dd h_{2}}{|h_{2}|}\Big)^{\frac{1}{q_2}}  \\
    &+ \Big(\int_{\R}|h_{1}|^{-\alpha_{1}q_{1}}\Big(\int_{\T}|h_{2}|^{-\alpha_{2}q_{2}}\sup_{|r_{1}|\le|h_{1}|,|r_{2}|\le h_{2}}\|\square_{r_{1},r_{2}}f\|_{\overline{p}}^{q_{2}}\frac{\dd h_{2}}{|h_{2}|}\Big)^{q_{1}/q_{2}}\frac{\dd h_{1}}{|h_{1}|}\Big)^{1/q_{1}}
  \end{align*}
  and
  \begin{align*}
    \|f\|_{\overline{\alpha},\overline{p},\overline{q}}^{(2)}  := & \, \|f\|_{\overline{p}} \, +  \Big( \int_{\R} h_{1}^{-\alpha_{1}q_{1}} \|\delta_{h_{1}}^1 f\|_{{\overline{p}}}^{q_{1}}\frac{\dd h_{1}}{|h_{1}|}  \Big)^{\frac{1}{q_1}}  + \Big( \int_{\T} h_2^{-\alpha_{2}q_{2}}  \|\delta_{h_{2}}^2 f\|_{\overline{p}}^{q_{2}}\frac{\dd h_{2}}{|h_{2}|}\Big)^{\frac{1}{q_2}}  \\
    &  + \,  \Big(\int_{\R}|h_{1}|^{-\alpha_{1}q_{1}}\Big(\int_{\T}|h_{2}|^{-\alpha_{2}q_{2}}\|\square_{h_{1},h_{2}}f\|_{\overline{p}}^{q_{2}}\frac{\dd h_{2}}{|h_{2}|}\Big)^{q_{1}/q_{2}}\frac{\dd h_{1}}{|h_{1}|}\Big)^{1/q_{1}}
    %=& \,  \|f\|_{\overline{p}} \,  + 2 \Big( \int\limits_{0}^{\infty} h_{1}^{-\alpha_{1}q_{1}}\|\delta_{h_{1}}^1 f\|_{{\overline{p}}}^{q_{1}}\frac{\dd h_{1}}{h_{1}}  \Big)^{\frac{1}{q_1}}  +2 \Big( \int\limits_{0}^{\infty} h_2^{-\alpha_{2}q_{2}}  \|\delta_{h_{2}}^2 f\|_{\overline{p}}^{q_{2}}\frac{\dd h_{2}}{h_{2}}\Big)^{\frac{1}{q_2}} \\
    % & + \, 4\Big(\int_{0}^{\infty}h_{1}^{-\alpha_{1}q_{1}}\Big(\int_{0}^{\infty}h_{2}^{-\alpha_{2}q_{2}}\|\square_{h_{1},h_{2}}f\|_{\overline{p}}^{q_{2}}\frac{\dd h_{2}}{h_{2}}\Big)^{q_{1}/q_{2}}\frac{\dd h_{1}}{h_{1}}\Big)^{1/q_{1}}
  \end{align*}
  for $f\in\mathcal{S}_{\pi_2}'(\R^2)$ with the usual modification if $q_{1},q_{2},p_{1}$ or $p_{2}$ is equal to infinity.
\end{definition}

It turns out that these norms are equivalent Besov norms with dominating mixed smoothness.

\begin{theorem}\label{thm:characteristation}
  Let $\overline{\alpha}=(\alpha_{1},\alpha_{2})\in (0,1)^2$, $\overline{p} = (p_{1},p_{2}) \in[1,\infty]^2$ and $\overline{q} =(q_1,q_2) \in [1, \infty]^2$. Then, $\|\cdot\|_{\overline{\alpha},\overline{p},\overline{q}}^{(1)}$, $\|\cdot\|_{\overline{\alpha},\overline{p},\overline{q}}^{(2)}$ and $\|\cdot\|_{\overline{\alpha},\overline{p},\overline{q}}$ are equivalent norms on the space $B_{\overline{p},\overline{q}}^{\overline{\alpha}}(\R \times \T)$, that is,
  \begin{equation*}
    \|f\|_{\overline{\alpha},\overline{p},\overline{q}}\lesssim\|f\|_{\overline{\alpha},\overline{p},\overline{q}}^{(2)}\lesssim\|f\|_{\overline{\alpha},\overline{p},\overline{q}}^{(1)}\lesssim\|f\|_{\overline{\alpha},\overline{p},\overline{q}},\quad\text{for }f\in\mathcal{S}^{\prime}_{\pi_2}(\R^{2}) \cap L^{\overline{p}}(\R\times\mathbb T).
  \end{equation*}
  Furthermore, one can replace the integral domains in definitions of $\|\cdot\|_{\overline{\alpha},\overline{p},\overline{q}}^{(1)}$ and $\|\cdot\|_{\overline{\alpha},\overline{p},\overline{q}}^{(2)}$ by $h_{1},h_2\in[-1,1]$.
\end{theorem}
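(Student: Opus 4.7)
The middle inequality $\|f\|^{(2)}_{\overline{\alpha},\overline{p},\overline{q}} \lesssim \|f\|^{(1)}_{\overline{\alpha},\overline{p},\overline{q}}$ is immediate: since $\square_{h_1,h_2}$ and $\delta_{h_i}$ appear on the left-hand side with their actual arguments, they are dominated by the corresponding suprema over smaller shifts on the right-hand side. The claim that one may restrict the integrals to $h_1,h_2 \in [-1,1]$ will follow from the uniform bounds $\|\delta_{h_i}^i f\|_{\bar p}\le 2\|f\|_{\bar p}$ and $\|\square_{h_1,h_2}f\|_{\bar p}\le 4\|f\|_{\bar p}$, together with the integrability at infinity of $|h|^{-\alpha q-1}$ for $\alpha>0$, so the contribution from $|h_i|>1$ is absorbed into $\|f\|_{\bar p}$.

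For the first inequality $\|f\|_{\overline{\alpha},\overline{p},\overline{q}}\lesssim \|f\|^{(2)}_{\overline{\alpha},\overline{p},\overline{q}}$, I split the double sum defining the left-hand side according to whether $j,k$ equal $-1$ or are nonnegative. Writing $\check\rho_j=\F^{-1}\rho_j$, I use the vanishing moment $\int_{\R}\check\rho_j(y)\dd y = 2\pi\rho_j(0)=0$ for $j\ge 0$. For $j,k\ge 0$, applying the moment-cancellation twice rewrites $\Delta_{j,k}f(x)$ as an integral $\int\!\!\int \check\rho_j(y_1)\check\rho_k(y_2)\square_{-y_1,-y_2}f(x)\dd y_1\dd y_2$, and Minkowski's integral inequality yields
\begin{equation*}
\|\Delta_{j,k}f\|_{\bar p}\lesssim \int\!\!\int |\check\rho_j(y_1)||\check\rho_k(y_2)|\,\|\square_{y_1,y_2}f\|_{\bar p}\dd y_1\dd y_2.
\end{equation*}
For $j\ge0$, $k=-1$ one uses only the first moment cancellation, obtaining an estimate in terms of $\|\delta_{y_1}^1 f\|_{\bar p}$ after a Young-type bound in the $y_2$-variable against $\check\rho_{-1}$; the symmetric case is analogous, and the $(-1,-1)$ term is bounded by $\|f\|_{\bar p}$. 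The rescaling $y_i=2^{-j_i}z_i$ combined with the rapid decay $|\check\rho(z)|\le C_N(1+|z|)^{-N}$ and Minkowski's integral inequality in $\ell^{q_1}\!\otimes\!\ell^{q_2}$ then converts the discrete dyadic sums into the three continuous integrals appearing in $\|f\|^{(2)}_{\overline{\alpha},\overline{p},\overline{q}}$.

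For the last inequality $\|f\|^{(1)}_{\overline{\alpha},\overline{p},\overline{q}}\lesssim \|f\|_{\overline{\alpha},\overline{p},\overline{q}}$, I decompose $f=\sum_{j',k'\ge-1}\Delta_{j',k'}f$ and bound $\|\square_{r_1,r_2}\Delta_{j',k'}f\|_{\bar p}$ via two competing estimates: the trivial bound $4\|\Delta_{j',k'}f\|_{\bar p}$, and the Taylor-type bound $|r_1||r_2|\,2^{j'+k'}\|\Delta_{j',k'}f\|_{\bar p}$ which follows from a fundamental-theorem-of-calculus representation of $\square_{r_1,r_2}$ in terms of $\partial_1\partial_2$ and the Bernstein inequality from Lemma~\ref{lem:bernstein inequalities}. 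Since the right-hand sides are bounded uniformly in $|r_i|\le|h_i|$, the same bounds hold for $\sup_{|r_1|\le|h_1|,|r_2|\le|h_2|}\|\square_{r_1,r_2}\Delta_{j',k'}f\|_{\bar p}$. Choosing the smaller of the two estimates according to whether $2^{j'}|h_1|$ and $2^{k'}|h_2|$ exceed $1$, and integrating the resulting four-region geometric sums against the weights $|h_1|^{-\alpha_1q_1-1}|h_2|^{-\alpha_2q_2-1}$ (with a discrete Hardy inequality to pass from the continuous integral to the dyadic $\ell^{q_1}\ell^{q_2}$-norm), produces the mixed-increment estimate. The single-variable terms are handled analogously with $f=\sum_{j'}\Delta_{j'}^1 f$ and $f=\sum_{k'}\Delta_{k'}^2 f$, whose Bernstein ingredients are the one-sided parts of Lemma~\ref{lem:bernstein inequalities}.

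The main obstacle is the bookkeeping in the last step: the mixed $L^{\bar p}$ norm, the separate $\ell^{q_1}$ and $\ell^{q_2}$ sums, and the two-variable dyadic partition of $(h_1,h_2)$ must all be interleaved carefully, since the integrals in $\|f\|^{(1)}$ are nested (first $h_2$, then $h_1$) in the same order as the $\ell^{q_i}$-norms in $\|f\|_{\overline{\alpha},\overline{p},\overline{q}}$, which is what makes the Hardy-type summation succeed. The remaining ingredients are standard; only the fact that the second variable lives on $\T$ requires slight care, namely confirming that periodic extension commutes with Littlewood--Paley blocks and with the rectangular increments.
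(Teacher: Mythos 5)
Your architecture is sound and genuinely different from the paper's. You establish $\|f\|_{\overline{\alpha},\overline{p},\overline{q}}\lesssim\|f\|^{(2)}_{\overline{\alpha},\overline{p},\overline{q}}$ directly, using the vanishing moment of $\F^{-1}\rho_j$ for $j\ge0$ to rewrite $\Delta_{j,k}f$ as an integral against rectangular (resp.\ directional) increments; this bypasses both the approximation-kernel construction of Lemma~\ref{lem:aux3} (where $g_{j_1,j_2}=(K_{2^{-j_1}}\otimes K_{2^{-j_2}})\ast f$ and a telescoping sum are used to reach $\|f\|^{(1)}$) and the separate sup-removal argument of the paper's Step~3. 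For $\|f\|^{(1)}_{\overline{\alpha},\overline{p},\overline{q}}\lesssim\|f\|_{\overline{\alpha},\overline{p},\overline{q}}$ you replace the Peetre-type maximal function $f^*_{j_1,j_2}$ of Lemma~\ref{lem:aux1}, the maximal inequality of Lemma~\ref{lem: maximal_type_inequality} and Corollary~\ref{cor:aux2} by direct FTC/Bernstein estimates, which is legitimate because the mixed norm on $\R\times\T$ is translation invariant; this is arguably more elementary than the paper's route. Your treatment of the restriction to $h_i\in[-1,1]$ and of periodicity in the second variable is essentially fine (for the mixed term with $|h_1|>1$ one absorbs into the $\delta^2$-term of the norm, not only into $\|f\|_{\overline{p}}$, but that is harmless).

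There is, however, one concrete gap in the last step. The two bounds you allow yourself, $\|\square_{r}\Delta_{j',k'}f\|_{\overline{p}}\lesssim\|\Delta_{j',k'}f\|_{\overline{p}}$ and $\lesssim|r_1||r_2|2^{j'+k'}\|\Delta_{j',k'}f\|_{\overline{p}}$, only yield $\min\bigl(1,\,2^{j'+k'}|h_1||h_2|\bigr)\|\Delta_{j',k'}f\|_{\overline{p}}$, and in the two mixed regions (say $2^{j'}|h_1|\le1\le2^{k'}|h_2|$) this minimum can equal $1$ although the correct size is $2^{j'}|h_1|$. The loss is fatal for the bookkeeping: already for an $f$ with a single active block $\Delta_{j_0,k_0}f$, the weighted sums over the mixed region carry the weight $2^{\alpha_1 q_1 k_1}$ over a range of $k_1$ up to $j_0+k_0$ on which your bound does not decay, producing a quantity of order $2^{\alpha_1(j_0+k_0)}$ (or an extra unbounded factor $(j_0+k_0)^{1/q_1}$ when $\alpha_1=\alpha_2$), which is not $\lesssim 2^{\alpha_1j_0+\alpha_2k_0}$; hence the four-region summation cannot close as written. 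What is needed is the tensorized estimate $\sup_{|r_i|\le|h_i|}\|\square_{r}\Delta_{j',k'}f\|_{\overline{p}}\lesssim\min(1,2^{j'}|h_1|)\,\min(1,2^{k'}|h_2|)\,\|\Delta_{j',k'}f\|_{\overline{p}}$, the analogue of Lemma~\ref{lem:aux1}. Fortunately it follows from exactly your own ingredients applied one variable at a time: write $\square_{r_1,r_2}=\delta^1_{r_1}\delta^2_{r_2}$ and bound $\|\delta^1_{r_1}g\|_{\overline{p}}\le\min\bigl(2,\,C|r_1|2^{j'}\bigr)\|g\|_{\overline{p}}$ via FTC, Minkowski, translation invariance and Lemma~\ref{lem:bernstein inequalities}(i), and likewise for $\delta^2_{r_2}$ (the $\xi_2$-support band is unchanged). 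With this replacement, your four-region argument — which also uses $\alpha_i<1$ for the convergence of the sums in the regions $2^{j'}|h_i|\le1$ — goes through, and the rest of the proposal stands.
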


\begin{remark}
  A straight forward modification of our proof yields an analogous characterization of (non-periodic) Besov spaces with dominating mixed smoothness on $\R^2$ by replacing the torus by the real line in the definitions of $\|f\|_{\bar \alpha,\bar p,\bar q}^{(1)}$ and $\|f\|_{\bar \alpha,\bar p,\bar q}^{(2)}$.
\end{remark}

\subsection{Proof of Theorem~\ref{thm:characteristation}}

Before we prove the Theorem~\ref{thm:characteristation}, we provide several auxiliary results.

\begin{lemma}\label{lem:aux1}
  For any $f\in\mathcal{S}_{\pi_2}'(\R^2)$, $j_{1},j_{2}\ge-1$ and some $a>0$ the maximal function is defined as 
  \begin{equation}\label{eq:fstar}
    f_{j_{1},j_{2}}^{*}(x):=\sup_{y\in\R^{2}}\frac{|\Delta_{j_{1},j_{2}}f(x-y)|}{1+(2^{ 2 j_{1}}|y_{1}|^2+2^{ 2j_{2}}|y_{2}|^2)^{a/2}},\qquad x\in\R^{2}.
  \end{equation}
  Then, we have for all $k_{1},k_{2}\ge-1$ and $\bar p\in[1,\infty]^2$ that
  \begin{align*}
    &\sup_{|h_{1}|\le 2^{-k_{1}},|h_{2}|\le2^{-k_{2}}}\|\square_{(h_{1},h_{2})}(\Delta_{j_{1},j_{2}}f)\|_{\bar p}
     \lesssim   \min(1,2^{(j_{1}-k_{1})})\min(1,2^{(j_{2}-k_{2})})\|f_{j_{1},j_{2}}^{*}\|_{\bar p}, \\
    &\sup_{|h_{1}|\le2^{-k_{1}}}\|\delta_{h_{1}}(\Delta_{j_{1},j_{2}}f)\|_{\bar p}
     \lesssim   \min(1,2^{(j_{1}-k_{1})}) \|f_{j_{1},j_{2}}^{*}\|_{\bar p}, \\
    &\sup_{|h_{2}|\le2^{-k_{2}}}\|\delta_{h_{2}}(\Delta_{j_{1},j_{2}}f)\|_{\bar p}
     \lesssim  \min(1,2^{(j_{2}-k_{2})})\|f_{j_{1},j_{2}}^{*}\|_{\bar p}.
  \end{align*}
\end{lemma}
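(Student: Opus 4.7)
The three estimates share a common interpolation template. In each direction we will trade off between the fundamental theorem of calculus combined with the Bernstein inequality Lemma~\ref{lem:bernstein inequalities} (which produces a gain of $2^{j_i-k_i}$ when $k_i\ge j_i$), and a trivial triangle-inequality bound (which yields the constant $1$ when $k_i<j_i$). The Peetre-type maximal function enters only through the pointwise inequality $|\Delta_{j_1,j_2}f(x)|\le f^{*}_{j_1,j_2}(x)$, obtained by setting $y=0$ in \eqref{eq:fstar}; integrating, $\|\Delta_{j_1,j_2}f\|_{\bar p}\le\|f^{*}_{j_1,j_2}\|_{\bar p}$.

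The key technical step is the following auxiliary one-dimensional bound. Whenever $g\in C^{\infty}(\R\times\T)$ satisfies $\supp\F g\subseteq 2^{j_1}\mathcal{B}\times\R$, I claim that
\[
  \sup_{|h_1|\le 2^{-k_1}}\|\delta_{h_1}g\|_{\bar p}\lesssim\min(1,2^{j_1-k_1})\|g\|_{\bar p},
\]
with the analogous statement in the second coordinate when $\supp\F g\subseteq\R\times 2^{j_2}\mathcal{B}$. When $k_1\ge j_1$ I would write $\delta_{h_1}g(x)=\int_0^{h_1}\partial_1 g(x_1+t_1,x_2)\dd t_1$, apply Minkowski's integral inequality together with translation invariance of $L^{\bar p}$, and finish via Lemma~\ref{lem:bernstein inequalities}(i) (with $p_1'=p_1$) to obtain $\|\delta_{h_1}g\|_{\bar p}\le|h_1|\|\partial_1 g\|_{\bar p}\lesssim|h_1|\,2^{j_1}\|g\|_{\bar p}\le 2^{j_1-k_1}\|g\|_{\bar p}$. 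When $k_1<j_1$, the triangle inequality together with translation invariance gives $\|\delta_{h_1}g\|_{\bar p}\le 2\|g\|_{\bar p}$. Applied to $g=\Delta_{j_1,j_2}f$, this auxiliary bound immediately yields the second and third estimates of the lemma.

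For the rectangular increment I would use the factorization $\square_{(h_1,h_2)}=\delta_{h_1}\circ\delta_{h_2}$, which follows from the definitions by a direct check. The crucial observation is that $\delta_{h_2}(\Delta_{j_1,j_2}f)$ inherits the Fourier support of $\Delta_{j_1,j_2}f$, since $\F(\delta_{h_2}g)(\xi)=(e^{ih_2\xi_2}-1)\F g(\xi)$. Hence the auxiliary bound in the first variable applies with $g=\delta_{h_2}(\Delta_{j_1,j_2}f)$ and produces the factor $\min(1,2^{j_1-k_1})$ times $\|\delta_{h_2}(\Delta_{j_1,j_2}f)\|_{\bar p}$. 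A second application of the auxiliary bound, now in the second variable with $g=\Delta_{j_1,j_2}f$, extracts $\min(1,2^{j_2-k_2})$ times $\|\Delta_{j_1,j_2}f\|_{\bar p}$, which is controlled by $\|f^{*}_{j_1,j_2}\|_{\bar p}$. All bounds are uniform in $h$, so taking the supremum over $|h_i|\le 2^{-k_i}$ is free. The only point of care is verifying the preservation of Fourier supports after applying the difference operators, which is immediate from the Fourier identity above; no appeal to any Peetre-type shift invariance of $f^{*}_{j_1,j_2}$ is needed at this stage.
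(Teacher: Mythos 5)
Your proof is correct, and it takes a genuinely different route from the paper. The paper works pointwise with the Peetre-type maximal function: for $j_i\le k_i$ it bounds $\square_h(\Delta_{j_1,j_2}f)$ by the mixed derivative via a double fundamental-theorem-of-calculus step, then controls $\partial_1\partial_2\Delta_{j_1,j_2}f$ by $2^{j_1+j_2}f^{*}_{j_1,j_2}$ through a reproducing-kernel convolution identity ($\Delta_{j_1,j_2}f=\Delta_{j_1,j_2}f\ast 2^{j_1+j_2}\Psi(2^{j_1}\cdot,2^{j_2}\cdot)$) together with the peak-shift inequality $1+(2^{j_1}|y_1|)^a+(2^{j_2}|y_2|)^a\lesssim(1+(2^{j_1}|z_1|)^a+(2^{j_2}|z_2|)^a)(1+\dots)^a$, treating the mixed regimes $j_1>k_1$, $j_2\le k_2$ separately (where translates of $f^{*}_{j_1,j_2}$ appear) and using translation invariance for $j_i>k_i$. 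You instead argue at the level of norms: differences are Fourier multipliers $(e^{ih_i\xi_i}-1)$ and hence preserve the spectral localisation, so FTC plus Minkowski plus the already-proved Bernstein inequality (Lemma~\ref{lem:bernstein inequalities}) yield the gain $2^{j_i-k_i}$, the triangle inequality gives the trivial bound, and the factorization $\square_{(h_1,h_2)}=\delta_{h_1}\circ\delta_{h_2}$ combines the two directions; the maximal function enters only through $|\Delta_{j_1,j_2}f(x)|\le f^{*}_{j_1,j_2}(x)$ at $y=0$. Your argument is shorter, avoids all case distinctions and the maximal-function machinery inside this lemma, and in fact proves the stronger estimate with $\|\Delta_{j_1,j_2}f\|_{\bar p}$ on the right-hand side (which would even let one bypass Corollary~\ref{cor:aux2} in Step~1 of the proof of Theorem~\ref{thm:characteristation}); the paper's pointwise approach buys the standard Triebel-style maximal-function formulation, but for the statement as given your route suffices. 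The only points requiring care — that $\delta_{h_2}(\Delta_{j_1,j_2}f)$ is again smooth, periodic in the second variable, lies in $L^{\bar p}$ whenever $\Delta_{j_1,j_2}f$ does (otherwise the claim is vacuous since $\|\Delta_{j_1,j_2}f\|_{\bar p}\le\|f^{*}_{j_1,j_2}\|_{\bar p}$), and retains the Fourier support needed for Bernstein — are all addressed or immediate from your multiplier identity.
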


\begin{remark}
  The function \(f_{j_{1},j_{2}}^{*}\) is periodic in the second component. Therefore, the $L^{\bar p}$-norms in Lemma~\ref{lem:aux1} are well-defined.
\end{remark}

\begin{proof}
  \emph{Step~1:} We prove in the case $j_{i}\le k_{i},i\in\{0,1\}$ that
  \begin{equation*}
    |\square_{(h_{1},h_{2})}(\Delta_{j_{1},j_{2}}f)(x)|\lesssim2^{(j_{1}-k_{1})+(j_{2}-k_{2})}f_{j_{1},j_{2}}^{*}(x),\qquad x\in\R^{2}.
  \end{equation*}
  For \(|h_i| \le 2^{-k_i}\) we estimate
  \begin{align}\label{eq:aux1}
    &|\square_{(h_{1},h_{2})}(\Delta_{j_{1},j_{2}}f)(x)|\nonumber  \\
    & \quad\le2^{-k_{2}}\Big|\int_{0}^{1}\big(\partial_{2}(\Delta_{j_{1},j_{2}}f)(x_{1}+h_{1},x_{2}+th_{2})-\partial_{2}(\Delta_{j_{1},j_{2}}f)(x_{1},x_{2}+th_{2})\big)\,\dd t\Big|\nonumber \\
    & \quad\le2^{-(k_{1}+k_{2})}\int_{0}^{1}\int_{0}^{1}|\partial_{1}\partial_{2} (\Delta_{j_{1},j_{2}}f)(x_{1}+t_{1}h_{1},x_{2}+t_{2}h_{2})|\,\dd t_{2}\dd t_{1}\nonumber \\
    & \quad\le2^{-(k_{1}+k_{2})}\sup_{|z_{i}|
    \le 2^{-k_{i}},i=1,2}|\partial_{1}\partial_{2}(\Delta_{j_{1},j_{2}}f)(x_{1}-z_{1},x_{2}-z_{2})|\nonumber \\
    & \quad\lesssim 2^{-(k_{1}+k_{2})}\sup_{|z_{i}|\le2^{-k_{i}},i=1,2}\frac{|\partial_{1}\partial_{2}(\Delta_{j_{1},j_{2}}f)(x_{1}-z_{1},x_{2}-z_{2})|}{1+(2^{2 j_{1}}|z_{1}|^2+2^{2j_{2}}|z_{2}|^2)^{a/2}}\nonumber \\
    & \quad\le2^{-(k_{1}+k_{2})}\sup_{z\in\R^{2}}\frac{|\partial_{1}\partial_{2} (\Delta_{j_{1},j_{2}}f)(x_{1}-z_{1},x_{2}-z_{2})|}{1+(2^{2 j_{1}}|z_{1}|^2+2^{2j_{2}}|z_{2}|^2)^{a/2} },
  \end{align}
  where we have used $j_{i}\le k_{i}$ in the fourth step. In order to estimate the supremum we proceed similar to Lemma~\ref{lem:bernstein inequalities}. Let $\psi$ be a smooth compactly supported function satisfying $\psi(\xi)=1$ on the support of $\rho$. Setting $\Psi:=\F^{-1}[\psi\otimes\psi] \in \mathcal{S}(\R^{2})$, we have
  \begin{align*}
    \Delta_{j_{1},j_{2}}f & =\F^{-1}\big[(\rho_{j_{1}}\otimes\rho_{j_{2}})\F (f)\big(\psi(2^{-j_{1}}\cdot)\otimes\psi(2^{-j_{2}}\cdot)\big)\big]\\
    & =\Delta_{j_{1},j_{2}}f\ast\F^{-1}[\psi(2^{-j_{1}}\cdot)\otimes\psi(2^{-j_{2}}\cdot)]\\
    & =\Delta_{j_{1},j_{2}}f\ast\Big(2^{(j_{1}+j_{2})}\Psi(2^{j_{1}}\cdot,2^{j_{2}}\cdot)\Big).
  \end{align*}
  Therefore,
  \begin{align*}
    & | \partial_{1}\partial_{2}(\Delta_{j_{1},j_{2}}f)(x_{1}-z_{1},x_{2}-z_{2})  |  \\
    & \,\, = \Big| 2^{2(j_{1}+j_{2})}\int_{\R^{2}}\Delta_{j_{1},j_{2}}f(y)\partial_{1}\partial_{2}\Psi\big(2^{j_{1}}(x_{1}-z_{1}-y_{1}),2^{j_{2}}(x_{2}-z_{2}-y_{2})\big)\,\dd y \Big | \\
    &  \,\, \lesssim 2^{2(j_{1}+j_{2})}\int_{\R^{2}}|\Delta_{j_{1},j_{2}}f(x_{1}-y_{1},x_{2}-y_{2})|(1+2^{ 2 j_{1}}|y_{1}-z_{1}|^2+2^{2j_{2}}|y_{2}-z_{2}|^2)^{-b}\,\dd y\\
    &  \,\, \le2^{2(j_{1}+j_{2})}f_{j_{1},j_{2}}^{*}(x)\int_{\R^{2}}\big(1+(2^{ 2 j_{1}}|y_{1}|^2+2^{2 j_{2}}|y_{2}|^2)^{\frac{a}{2}}\big)(1+2^{2 j_{1}}|y_{1}-z_{1}|^2+2^{2j_{2}}|y_{2}-z_{2}|^2)^{-b}\,\dd y
  \end{align*}
  with some arbitrary large $b>1$. Next, applying the triangle inequality, we find
  \begin{equation*}
    1+(|2^{j_{1}}y_{1}|^2+|2^{ j_{2}}y_{2}|^2)^{a/2}
    \lesssim (1+(|2^{j_{1}}z_{1}|^2+|2^{j_{2}}z_{2}|^2)^{a/2} ) (1+2^{ 2 j_{1}}|y_{1}-z_{1}|^2+2^{2j_{2}}|y_{2}-z_{2}|^2)^{a/2},
  \end{equation*}
  and therefore we can estimate the supremum in \eqref{eq:aux1} as follows
  \begin{align*}
    &\sup_{z\in\R^{2}}\frac{\partial_{1}\partial_{2} (\Delta_{j_{1},j_{2}}f) (x_{1}-z_{1},x_{2}-z_{2})}{1+(|2^{j_{1}}z_{1}|^2+|2^{j_{2}}z_{2}|^2)^{a/2}}\\
    &\quad \lesssim  2^{2(j_{1}+j_{2})}f_{j_{1},j_{2}}^{*}(x) \sup_{z\in\R^{2}}  \int_{\R^{2}}(1+2^{j_{1}}|y_{1}-z_{1}|+2^{j_{2}}|y_{2}-z_{2}|)^{-c}\,\dd y \\
    &\quad \lesssim 2^{j_{1}+j_{2}}f_{j_{1},j_{2}}^{*}(x)\int_{\R^{2}}(1+|y_{1}|+|y_{2}|)^{-c}\,\dd y
  \end{align*}
  for $c=b-a/2$ which can be arbitrary large. Since the integral is finite, Step~1 is completed by combining the last line with \eqref{eq:aux1}.

  \emph{Step~2}: If $j_{1}> k_{1}$ and $j_{2}\le k_{2}$ we show that
  \begin{equation*}
    |\square_{(h_{1},h_{2})}(\Delta_{j_{1},j_{2}}f)(x)|\lesssim2^{j_{2}-k_{2}}\big(f_{j_{1},j_{2}}^{*}(x_{1}+h_{1},x_{2})+f_{j_{1},j_{2}}^{*}(x_{1},x_{2})\big).
  \end{equation*}
  Applying the same arguments in second coordinate only, we get for arbitrary \(b > 1\)
  \begin{align*}
    & |\square_{(h_{1},h_{2})}(\Delta_{j_{1},j_{2}}f)(x)|\\
    & \quad\le|(\Delta_{j_{1},j_{2}}f)(x_{1}+h_{1},x_{2}+h_{2})-(\Delta_{j_{1},j_{2}}f)(x_{1}+h_{1},x_{2})|\\
    & \quad\qquad+|(\Delta_{j_{1},j_{2}}f)(x_{1},x_{2}+h_{2})-(\Delta_{j_{1},j_{2}}f)(x_{1},x_{2})|\\
    & \quad\lesssim 2^{-k_{2}}\sup_{z_{2}\in\R}\frac{|\partial_{2}(\Delta_{j_{1},j_{2}}f)(x_{1}+h_{1},x_{2}-z_{2})|}{1+(2^{ 2 j_{2}}|z_{2}|^2)^{a/2}} + 2^{-k_{2}}\sup_{z_{2}\in\R}\frac{|\partial_{2}(\Delta_{j_{1},j_{2}}f)(x_{1},x_{2}-z_{2})|}{1+ ( 2^{2j_{2}}|z_{2}|^2)^{a/2}}\\
    & \quad\lesssim  2^{j_{1}+2j_{2}-k_{2}}\big(f_{j_{1},j_{2}}^{*}(x_{1}+h_{1},x_{2})+f_{j_{1},j_{2}}^{*}(x_{1},x_{2})\big)\\
    & \quad\qquad\qquad \sup_{z_{2}\in\R}\int_{\R^{2}}\frac{1+(2^{2   j_{1}}|y_{1}|^2+2^{2j_{2}}|y_{2}|^2)^{a/2}}{1+(2^{2 j_{2}}|z_{2}|^2)^{a/2}}(1+2^{2j_{1}}|y_{1}|^2+2^{2j_{2}}|y_{2}-z_{2}|^2)^{-b/2}\,\dd y\\
    & \quad\lesssim2^{j_{1}+2j_{2}-k_{2}}\big(f_{j_{1},j_{2}}^{*}(x_{1}+h_{1},x_{2})+f_{j_{1},j_{2}}^{*}(x_{1},x_{2})\big)\\
    & \quad\qquad\qquad\sup_{z_{2}\in\R}\int_{\R^{2}}\big(1+2^{ 2 j_{1}}|y_{1}|^2+2^{2 j_{2}}|y_{2}-z_{2}|^2 \big)^{a/2} \big(1+2^{2j_{1}}|y_{1}|^2+2^{2j_{2}}|y_{2}-z_{2}|^2 \big)^{-b/2}\,\dd y\\
    & \quad\lesssim2^{j_{2}-k_{2}}\big(f_{j_{1},j_{2}}^{*}(x_{1}+h_{1},x_{2})+f_{j_{1},j_{2}}^{*}(x_{1},x_{2})\big)\int_{\R^{2}}(1+|y_{1}|+|y_{2}|)^{-c}\,\dd y.
  \end{align*}
  The analogue statement holds true for the case \(j_1 \le k_1\), \(j_2 > k_2 \).

  \emph{Step~3:} We conclude the first inequality of the lemma.

  First note, that in the case \(j_i > k_i\) for \(i \in \{1,2\}\) we can use the translation invariance of the Lebesgue measure on \(\R \times \T\) such that we obtain
  \begin{align*}
    \|f_{j_{1},j_{2}}^{*}\|_{\bar p} & \ge\sup_{y\in\R^{2}}\frac{\|\Delta_{j_{1},j_{2}}f(\cdot -y)\|_{\bar p}}{1+(2^{j_{1}}|y_{1}|+2^{j_{2}}|y_{2}|)^{a}}\\
    & =\sup_{y\in\R^{2}}\frac{\|\Delta_{j_{1},j_{2}}f\|_{\bar p}}{1+(2^{j_{1}}|y_{1}|+2^{j_{2}}|y_{2}|)^{a}}=\|\Delta_{j_{1},j_{2}}f\|_{\bar p},
  \end{align*}
  which proves the assertion using the triangle inequality. Next, the case $j_{i}\le k_{i}$ for $i\in\{1,2\}$ follows from Step 1 while the case $j_{1}> k_{1}$ and $j_{2}\le k_{2}$ follows from Step 2 by taking $L^{\bar p}$-norm on both sides. Similarly, for $j_{1}\le k_{1}$ and $j_{2}> k_{2}$. 
  
  \emph{Step 4:} We conclude the second and third inequality.
  
  For \(|h_1| \le 2^{-k_1}\) and \(j_1 \le k_1\) we have
  \begin{align*}
    |\delta_{h_1}|
    &\le 2^{-k_1} \int\limits_{0}^1 \partial_1  (\Delta_{j_{1},j_{2}}f)(x_1+h_1t,x_2) \dd t \\
    &\le 2^{-k_1} \sup\limits_{|z_1| \le 2^{-k_1}}  \partial_1  (\Delta_{j_{1},j_{2}}f)(x_1-z_1,x_2).
  \end{align*}
  But at this moment we can repeat the arguments from Step 1 and 2 to obtain
  \begin{equation*}
    \sup_{|h_{1}|\le2^{-k_{1}}}\|\delta_{h_{1}}(\Delta_{j_{1},j_{2}}f)\|_{\bar p}
    \lesssim  2^{(j_{1}-k_{1})} \|f_{j_{1},j_{2}}^{*}\|_{\bar p}.
  \end{equation*}
  Moreover, for \(j_1 > k_1 \) the triangle inequality (see Step 3) implies
  \begin{equation*}
    \sup_{|h_{1}|\le2^{-k_{1}}}\|\delta_{h_{1}}(\Delta_{j_{1},j_{2}}f)\|_{\bar p}
    \lesssim  \|f_{j_{1},j_{2}}^{*}\|_{\bar p}.
  \end{equation*}
  Consequently, applying similar arguments for \(\delta_{h_2} f\) we conclude the lemma.
\end{proof}

\begin{lemma}\label{lem: maximal_type_inequality}
  Let \(\Omega \subseteq \R^2\) be a compact set, $\bar p=(p_1,p_2)\in[1,\infty]^2$ and \(r\in(0,1)\). Assume further that \(g \in L^{\overline{p}}(\R \times \T) \cap \mathcal{S}'_{\pi_2}(\R^2) \) satisfies \(\supp\mathcal{F} g \subseteq \Omega \). Then, the following inequality holds
  \begin{equation*}
    \Big\|  \sup\limits_{y \in \R^2} \frac{g(\cdot -y ) }{1+|y|^{2/r}}\Big \|_{\overline{p}}
    \lesssim \|g \|_{\overline{p}}.
  \end{equation*}
\end{lemma}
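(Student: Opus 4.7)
The plan is to follow Peetre's classical maximal function strategy, adapted to the mixed-norm setting. First, using the compactness of $\supp\mathcal{F}g$, I would choose $\psi\in C_c^\infty(\R^2)$ with $\psi\equiv 1$ on $\Omega$ and set $\Psi:=\mathcal{F}^{-1}\psi\in\mathcal{S}(\R^2)$, so that $g=g\ast\Psi$ holds as a pointwise identity of smooth functions.

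The heart of the argument is the pointwise Peetre-type estimate
\begin{equation*}
  \sup_{y\in\R^2}\frac{|g(x-y)|}{1+|y|^{2/r}}\;\lesssim\;\bigl[M_1 M_2(|g|^r)(x)\bigr]^{1/r},\qquad x\in\R^2,
\end{equation*}
where $M_i$ denotes the one-dimensional Hardy--Littlewood maximal operator acting in the $i$-th variable. To prove this I would start from the convolution representation $g(x-y)=\int g(x-y-z)\Psi(z)\dd z$, combine the rapid decay of $\Psi$ with the subadditivity $(a+b)^r\le a^r+b^r$ valid for $r\in(0,1)$ to derive an inequality of Plancherel--Polya type,
\begin{equation*}
  |g(x-y)|^r\;\lesssim\;\int_{\R^2}\frac{|g(x-y-z)|^r}{(1+|z|)^{N}}\dd z
\end{equation*}
for arbitrarily large $N$. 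Splitting this integral into dyadic shells in each coordinate expresses the right-hand side as an iterated average of $|g|^r$, hence as an iterated maximal function at $x$. The elementary bound $1+|y|\lesssim(1+|y+z|)(1+|z|)$ then lets me absorb the weight $(1+|y|^{2/r})^{-1}$ uniformly in $y$.

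Having the pointwise estimate, I would take $L^{\overline{p}}$-norms and invoke the iterated maximal inequality on the mixed Lebesgue space $L^{\overline{p}/r}(\R\times\T)$. Since $r\in(0,1)$ ensures $\overline{p}/r\in(1,\infty]^2$, successive application of the classical one-variable maximal inequality (combined with Fubini to interchange the order of integration) yields $\|M_1 M_2(|g|^r)\|_{\overline{p}/r}\lesssim\||g|^r\|_{\overline{p}/r}=\|g\|_{\overline{p}}^{r}$, and raising to the power $1/r$ finishes the argument.

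The principal obstacle is the pointwise Peetre estimate: converting the straightforward convolution bound $|g|\le|g|\ast|\Psi|$ into an $|g|^r$-weighted average relies crucially on $r<1$, and the dyadic bookkeeping to pass from an $L^1$-type bound in $z$ to a product of one-dimensional averages needs some care. The periodicity in the second variable is not a real issue: $g$ viewed as a distribution on $\R^2$ still has the same Fourier support, $M_1M_2(|g|^r)$ is itself periodic in $x_2$, and the maximal inequality on $L^{\overline{p}/r}(\R\times\T)$ reduces to the one on $\R^2$ by restricting to a single period.
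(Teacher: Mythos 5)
Your overall architecture (pointwise Peetre-type estimate, then the iterated Hardy--Littlewood maximal inequality on $L^{\overline{p}/r}(\R\times\T)$) is the same as the paper's, and the final mixed-norm step, including the reduction of the periodic case to $\R^2$, is fine. The genuine gap is in the step you yourself flag as the heart of the matter: the claim that the Plancherel--P\'olya-type bound
\begin{equation*}
  |g(x-y)|^r\;\lesssim\;\int_{\R^2}\frac{|g(x-y-z)|^r}{(1+|z|)^{N}}\,\dd z
\end{equation*}
follows from $|g|\le|g|\ast|\Psi|$ together with the subadditivity of $t\mapsto t^r$ and ``dyadic bookkeeping''. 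If you split the convolution integral into unit cubes $Q_k$ and use $(\sum_k a_k)^r\le\sum_k a_k^r$, you are left with terms of the form $\bigl(\int_{Q_k}|g(w-z)|\,\dd z\bigr)^r$, and to proceed you would need $\bigl(\int_{Q}|g|\bigr)^r\lesssim\int_{Q}|g|^r$ (or a local sup of $|g|^r$ bounded by a local average of $|g|^r$). For a general function this is false (take $g=\lambda\1_E$ with $|E|=\epsilon$ small on a unit cube: the left side is $\lambda^r\epsilon^r$, the right side $\lambda^r\epsilon$); it is precisely the sub-mean-value property of band-limited functions that you are trying to prove, so the argument as sketched is circular. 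To close it you need one of the standard devices: (i) Peetre's interpolation-and-absorption trick, writing $|g(x-z)|=|g(x-z)|^r|g(x-z)|^{1-r}$ and bounding the second factor by a power of the Peetre maximal function itself, which requires an a priori finiteness argument before you may divide by it; (ii) the route the paper takes, namely the mean value theorem on small cubes $Q_l$ combined with the Bernstein-type inequality $\sup_y|\nabla\varphi(x-y)|/(1+|y|^{2/r})\lesssim\sup_y|\varphi(x-y)|/(1+|y|^{2/r})$ for band-limited $\varphi$ (Triebel, Theorem~1.3.1) and absorption for small $l$; or (iii) subharmonicity of $|g|^r$ for entire functions of exponential type. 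In either of the first two cases the absorption is only legitimate once finiteness of the maximal function is known, which is why the paper first proves the estimate for Schwartz functions and then passes to general $g\in L^{\overline{p}}$ via the approximation $g_\tau(x)=\psi(\tau x)g(x)$ and an $L^\infty$-convergence argument; your proposal applies the pointwise bound to $g$ directly and does not address this point either.
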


\begin{remark}
  The function \(g\) should be interpreted as a periodic function in the second component defined on \(\R^2\). This way the assumption \(\supp\mathcal{F} g \subseteq \Omega \) is reasonable in the sense of distributions.
\end{remark}

\begin{proof}
  Let \(f \in L^{\overline{p}}(\R\times\mathbb T)\) and \(x =(x_1,x_2)\). We define the iterated maximal function
  \begin{equation*}
    M(f)(x) := \sup\limits_{I_1 \in \mathcal{I}_1} \frac{1}{|I_1|}  \int_{I_1} \sup\limits_{I_2 \in \mathcal{I}_2}  \frac{1}{|I_1|}  \int_{I_2} |f(z_1,z_2)| \dd z_1 \dd z_2,
  \end{equation*}
  where \(\mathcal{I}_k\) denotes the set of all intervals containing \(x_k\).

  \emph{Step~1:} We first show that the inequality
  \begin{equation}\label{eq: maximal_inequality}
    \sup\limits_{y \in \R^2}  \frac{|\varphi(x -y )| }{1+|y|^{2/r}} \lesssim (M(|\varphi|^r) (x) )^{\frac{1}{r}}
  \end{equation}
  holds for \(\varphi \in \mathcal{S}(\R^2)\) with \(\supp\mathcal{F} \varphi \subseteq \Omega \) and all \(x \in \R^2\). Denote the cube around the origine with side length $l\in(0,1]$ by \(Q_l:= [-l,l] \times [-l,l]\). For any continuous differentiable function~\(h\) which is defined on \(Q_1\) the the mean value theorem yields
  \begin{equation*}
    |h(a)| \lesssim \inf\limits_{z \in Q_1} | h(z)| + \sup\limits_{z \in Q_1} | \nabla h(z)|
    \lesssim \Big( \int_{Q_1} |h(z)|^r \dd z \Big)^{\frac{1}{r}} + \sup\limits_{z \in Q_1} | \nabla h(z)| , \quad  a \in Q_1.
  \end{equation*}
  A simple scaling argument shows that for all functions \(h\), which are continuously differentiable on \(Q_l\), we have
  \begin{equation*}
    |h(0)|=|h(l\cdot 0)| \lesssim  l^{-\frac{2}{r}} \Big( \int_{Q_l} |h(z)|^r \dd z \Big)^{\frac{1}{r}} + l \sup\limits_{z \in Q_l} | \nabla h(z)|.
  \end{equation*}

  Next, let us fix \(x,y \in \R^2\), \(l \in (0,1)\) and set \(h(a) := \varphi(x-y-a)\). We also denote by \(Q_l^{x-y}\) the cube with side length \(l\) and center \(x-y\). Then, using the previous inequality, we find
  \begin{align*}
    |\varphi(x-y)| = |h(0)|
    & \lesssim  l^{-\frac{2}{r}} \Big( \int_{Q_l} |h(z)|^r \dd z \Big)^{\frac{1}{r}} + l \sup\limits_{z \in Q_l} | \nabla h(z)| \\
    &= l^{-\frac{2}{r}} \Big( \int_{Q_l} |\varphi(x-y-z)|^r \dd z \Big)^{\frac{1}{r}} + l \sup\limits_{z \in Q_l} | \nabla \varphi(x-y-z)| .
  \end{align*}
  Now, the integral on the right-hand side can be bounded by
  \begin{align*}
    \Big( \int_{Q_1^{x-y}} |\varphi(z)|^r \dd z \Big)^{\frac{1}{r}}
    &= \Big( \int_{[x_1-y_1-1,x_1-y_1+1]} \int_{[x_2-y_2-1,x_2-y_2+1]}  |\varphi(z)|^r \dd z_2 \dd z_2  \Big)^{\frac{1}{r}} \\
    &\le \Big( \int_{[x_1-|y_1|-1,x_1+|y_1|+1]} \int_{[x_2-|y_2|-1,x_2+|y_2|+1]}  |\varphi(z)|^r \dd z_2 \dd z_2  \Big)^{\frac{1}{r}}  \\
    &\le (4(1+|y_1|)(1+|y_2|))^{\frac{1}{r}} (M(|\varphi|^r)(x) )^{\frac{1}{r}}   \\
    &\lesssim (1+|y|^{2/r}) (M(|\varphi|^r)(x) )^{\frac{1}{r}} ,
  \end{align*}
  where in the last step we used the inequality
  \begin{equation*}
    (1+|y_1|)^{\frac{1}{r}}(1+|y_2|)^{\frac{1}{r}}
    \leq (1+|y|)^{\frac{2}{r}}
    \lesssim (1+|y|^{\frac{2}{r}}) .
  \end{equation*}
  As a result, dividing by \((1+|y|^{2/r})\) and then taking the supremum over \(y \in \R^2\) we obtain
  \begin{equation*}
    \sup\limits_{y \in \R^2} \frac{|\varphi(x-y)|}{1+|y|^{2/r}} \lesssim  l^{-\frac{2}{r}}  (M(|\varphi|^r)(x) )^{\frac{1}{r}}
    + l \sup\limits_{y \in \R^2}   \frac{| \nabla \varphi(x-y)| }{1+|y|^{2/r}}.
  \end{equation*}
  The assumptions on $\phi$ allow us to insert \cite[Theorem~1.3.1]{Triebel2010}, i.e.
  \begin{equation*}
    \sup\limits_{y \in \R^2}   \frac{| \nabla \varphi(x-y)| }{1+|y|^{2/r}}
    \lesssim \sup\limits_{y \in \R^2} \frac{|\varphi(x-y)|}{1+|y|^{2/r}}
  \end{equation*}
  in the previous inequality in order to obtain
  \begin{equation*}
    \sup\limits_{y \in \R^2} \frac{|\varphi(x-y)|}{1+|y|^{2/r}} \lesssim  l^{-\frac{2}{r}}  (M(|\varphi|^r)(x) )^{\frac{1}{r}}
    + l \sup\limits_{y \in \R^2} \frac{|\varphi(x-y)|}{1+|y|^{2/r}}.
  \end{equation*}
  Choosing \(l>0\) small enough and absorbing the term on the right-hand side proves \eqref{eq: maximal_inequality}.

  \emph{Step~2:} In this step we want to prove the claim of the lemma. For \(p=\infty\) the claim is obvious. For \(1 \le p_i < \infty\) let \(\psi \in \mathcal{S}(\R^2)\) with \(\psi(0)=1 \) and \(\supp\mathcal{F} \psi \subseteq B_1(0) \). Define \(g_{\tau}(x) := \psi(\tau x) g(x) \in \mathcal{S}(\R^2)\) and observe that the function $g_{\tau}(x)$ is in \(\mathcal{S}(\R^2)\) by the Paley--Wiener--Schwartz theorem. Then, since the Fourier transformation transforms products of two functions into convolutions we have \(\supp \mathcal{F} g_{\tau} \subseteq B_{\mathrm{diam}(\Omega)+1}(0)\). Consequently, \eqref{eq: maximal_inequality} implies
  \begin{equation} \label{eq: pointwise_max_inequality}
    \sup\limits_{y \in \R^2}  \frac{|g_{\tau}(x -y )| }{1+|y|^{2/r}}
    \lesssim (M(|g_{\tau}|^r) (x) )^{\frac{1}{r}}
    \lesssim  (M(|g|^r) (x) )^{\frac{1}{r}}.
  \end{equation}
  Moreover, \(g_\tau\) converges in \(L^\infty\) to \(g\) as \(\tau \to 0\). Indeed, let \(\tilde{\varphi}\) be a Schwartz function such that \( \mathcal{F} \tilde{\varphi} = 1 \) on \( B_{\mathrm{diam}(\Omega)+1}(0)\). Then, we have
  \begin{equation*}
    g_\tau(x)- g_{\tau'}(x) = \mathcal{F}^{-1}(\mathcal{F}  g_\tau \mathcal{F} \tilde{\varphi} -\mathcal{F}  g_{\tau'} \mathcal{F} \tilde{\varphi})(x)
    = \mathcal{F}^{-1}(\mathcal{F}  (g_\tau* \tilde{\varphi} ) -\mathcal{F}  (g_{\tau'} * \tilde{\varphi}))(x) =  (g_\tau- g_{\tau'})*\tilde{\varphi}(x).
  \end{equation*}
  By Young's inequality and dominated convergence, this implies that the family \((g_\tau, \tau > 0)\) is a Cauchy sequence in \(L^\infty\). Hence, it converges to some \(\tilde{g} \in L^\infty\). By the fundamental lemma of calculus of variations it is straightforward to show that \(\tilde{g} = g\) almost everywhere.
   Hence, after taking \(\tau \to 0\) in~\eqref{eq: pointwise_max_inequality}, we integrate both sides to find
  \begin{equation*}
    \Big \| \sup\limits_{y \in \R^2} \frac{g(\cdot -y ) }{1+|y|^{2/r}} \Big \|_{\overline{p}}
    \lesssim \| (M(|g|^r) )^{\frac{1}{r}} \|_{\overline{p}}.
  \end{equation*}
  Since \(p_i/r >1 \) we can apply \cite[Theorem~2.16]{Huang2021} (after reducing it to the \(\R^2\) case in the style of~\cite[Proposition 3.2.4.]{Schmeisser1987}), which states that the iterated maximal function is a bounded operator from \(L^{\frac{\overline{p}}{r}} \to L^{\frac{\overline{p}}{r}}\), and we obtain
  \begin{equation*}
    \Big \|  \sup\limits_{y \in \R^2} \frac{g(\cdot -y ) }{1+|y|^{2/r}} \Big  \|_{\overline{p}}
    \lesssim  \| g \|_{\overline{p}},
  \end{equation*}
  which proves the lemma.
\end{proof}

\begin{corollary}\label{cor:aux2}
  For $a>2$ and $\bar p\in[1,\infty]^2$ the function $f^*_{j_1,j_2}$ from \eqref{eq:fstar} satisfies
  \begin{equation*}
    \|f_{j_{1},j_{2}}^{*}\|_{\bar p}\le c\,\|\Delta_{j_{1},j_{2}}f\|_{\bar p},
  \end{equation*}
  for a constant $c>0$ that does not depend on $f,j_{1},j_{2}$.
\end{corollary}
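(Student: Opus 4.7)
The plan is to reduce the corollary to Lemma~\ref{lem: maximal_type_inequality} via an anisotropic rescaling that simultaneously normalises the Fourier support of the Littlewood--Paley block and turns the anisotropic denominator in \eqref{eq:fstar} into the isotropic denominator required by the lemma. Concretely, I would introduce
\[
g(u_1,u_2) := \Delta_{j_1,j_2}f(2^{-j_1}u_1,\, 2^{-j_2}u_2),
\]
which is periodic in $u_2$ with period $T:=2\pi\cdot 2^{j_2}$ and whose Fourier transform satisfies $\mathcal{F}g(\xi)=2^{j_1+j_2}\mathcal{F}(\Delta_{j_1,j_2}f)(2^{j_1}\xi_1,2^{j_2}\xi_2)$. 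Consequently $\supp\mathcal{F}g$ is contained in the fixed compact set $\Omega:=\mathcal{C}\times\mathcal{C}$ (with the evident variant $\mathcal{B}\times\mathcal{C}$ etc.\ if $j_1$ or $j_2$ equals $-1$), independent of $j_1,j_2$. Substituting $z_i=2^{j_i}y_i$ in the supremum defining $f^*_{j_1,j_2}$ yields the pointwise identity
\[
f^*_{j_1,j_2}(x) \;=\; \sup_{z\in\R^2}\frac{|g(2^{j_1}x_1-z_1,\,2^{j_2}x_2-z_2)|}{1+|z|^{a}}.
\]
The hypothesis $a>2$ then allows me to set $r:=2/a\in(0,1)$, so that $1+|z|^{a}=1+|z|^{2/r}$ is precisely the denominator appearing in Lemma~\ref{lem: maximal_type_inequality}.

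I would then apply Lemma~\ref{lem: maximal_type_inequality} to the rescaled function $g$, viewed as an element of $L^{\bar p}(\R\times\T_T)$ with $\T_T:=[-T/2,T/2]$. The resulting bound
\[
\Big\|\sup_{z\in\R^2}\frac{|g(\,\cdot\, - z)|}{1+|z|^{a}}\Big\|_{L^{\bar p}(\R\times\T_T)} \;\lesssim\; \|g\|_{L^{\bar p}(\R\times\T_T)}
\]
carries a constant depending only on $\Omega$ and $r$; in particular it is independent of $j_1,j_2$. A straightforward change of variables $u_i=2^{j_i}x_i$ then relates both sides to the original scale,
\[
\|f^*_{j_1,j_2}\|_{\bar p} \;=\; 2^{-(j_1/p_1+j_2/p_2)}\Big\|\sup_{z\in\R^2}\tfrac{|g(\cdot - z)|}{1+|z|^{a}}\Big\|_{L^{\bar p}(\R\times\T_T)}, \qquad \|g\|_{L^{\bar p}(\R\times\T_T)} \;=\; 2^{j_1/p_1+j_2/p_2}\|\Delta_{j_1,j_2}f\|_{\bar p},
\]
with the obvious modifications if some $p_i=\infty$. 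The two scaling factors cancel and the corollary follows.

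The main obstacle is the scale invariance alluded to above: one must check that the constant in Lemma~\ref{lem: maximal_type_inequality}, when applied on the torus $\T_T$ rather than $\T$, does not degenerate as $T\to\infty$. Tracing through the proof of that lemma, the only step that is sensitive to the period is the boundedness of the iterated Hardy--Littlewood maximal function; but this is a classical scale-invariant estimate, so the required uniformity follows. Once this is noted, the argument above gives the desired inequality with a constant $c>0$ independent of $f$, $j_1$ and $j_2$.
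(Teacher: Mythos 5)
Your argument is correct and follows essentially the same route as the paper: rescale the Littlewood--Paley block so its Fourier support lies in a fixed compact set, rewrite $f^*_{j_1,j_2}$ as the maximal-type expression for the rescaled function with denominator $1+|z|^{2/r}$, $r=2/a\in(0,1)$, and invoke the machinery of Lemma~\ref{lem: maximal_type_inequality}. The only cosmetic difference is that the paper undoes the dilation before applying the $L^{\bar p/r}$-boundedness of the iterated maximal operator on $\R\times\T$ (using that $M$ commutes with the anisotropic dilation, so $M(|g|^r)(2^{j_1}x_1,2^{j_2}x_2)=M(|\Delta_{j_1,j_2}f|^r)(x)$), whereas you apply the lemma on the torus of period $2\pi\,2^{j_2}$ and verify that the constant is uniform in the period --- which, as you correctly note, follows from the same scale-invariance.
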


\begin{proof}
  Rescaling $\Delta_{j_{1},j_{2}}f$, we define
  \begin{equation*}
    g_{j_{1},j_{2}}(x_{1},x_{2})  :=\Delta_{j_{1},j_{2}}f(2^{-j_{1}}x_{1},2^{-j_{2}}x_{2})
  \end{equation*}
  such that the Fourier transform of $g_{j_{1},j_{2}}$ is supported in $[-C,C]^{2}$ for some fixed $C$ depending only on $\rho$. Then, we calculate
  \begin{align*}
     \|f_{j_{1},j_{2}}^{*}\|_{\bar p}
    &= \Big(\int_{\R}\Big(\int_{\T}\sup_{y\in\R^{2}}\frac{|\Delta_{j_{1},j_{2}}f(x-y)|^{p_{2}}}{(1+(2^{2j_{1}}|y_{1}|^2+2^{2 j_{2}}|y_{2}|^2)^{a/2})^{p_{2}}}\dd x_{2}\Big)^{p_{1}/p_{2}}\dd x_{1}\Big)^{1/p_{1}}\\
    &\lesssim\Big(\int_{\R}\Big(\int_{\T}\sup_{y \in\R^2}\frac{|g_{j_{1},j_{2}}(2^{j_{1}}x_{1}-2^{j_{1}}y_{1},2^{j_{2}}x_{2}-2^{j_{2}}y_{2})|^{p_{2}}}{(1+(2^{2j_{1}}|y_{1}|^2+2^{2 j_{2}}|y_{2}|^2)^{a/2})^{p_{2}}}\dd x_{2}\Big)^{p_{1}/p_{2}}\dd x_{1}\Big)^{1/p_{1}}\\
    &  = \Big(\int_{\R}\Big(\int_{\T}\sup_{y \in\R^2}\frac{|g_{j_{1},j_{2}}(2^{j_{1}}x_{1}-y_{1},2^{j_{2}}x_{2}-y_{2})|^{p_{2}}}{(1+|y_{1}|^2+|y_{2}|^2)^{a/2})^{p_{2}}}\dd x_{2}\Big)^{p_{1}/p_{2}}\dd x_{1}\Big)^{1/p_{1}}\\
    &  \le \Big(\int_{\R}\Big(\int_{\T}\sup_{y \in\R^2}\frac{|g_{j_{1},j_{2}}(2^{j_{1}}x_{1}-y_{1},2^{j_{2}}x_{2}-y_{2})|^{p_{2}}}{(1+|y|^{a})^{p_{2}}}\dd x_{2}\Big)^{p_{1}/p_{2}}\dd x_{1}\Big)^{1/p_{1}}\\
       &  \le \Big(\int_{\R}\Big(\int_{\T} M(|g|^r)^{\frac{p_2}{r}} (2^{j_1}x_1,2^{j_2}x_2) \dd x_{2}\Big)^{p_{1}/p_{2}}\dd x_{1}\Big)^{1/p_{1}}\\
    &=  \Big(\int_{\R}\Big(\int_{\T} M(|\Delta_{j_{1},j_{2}}f|^r)^{\frac{p_2}{r}}(x_{1},x_{2}) \dd x_{2}\Big)^{p_{1}/p_{2}}\dd x_{1}\Big)^{1/p_{1}}\\
    & \lesssim \|\Delta_{j_{1},j_{2}}f\|_{\bar p},
  \end{align*}
  where we used inequality~\eqref{eq: pointwise_max_inequality} with \(r= \frac{2}{a}\), \(r \le \min(p_1,p_2)\) in the fifth step and the strong continuity of the maximal operator similar to Lemma~\ref{lem: maximal_type_inequality}.

\end{proof}

\begin{lemma}\label{lem:aux3}
  Let $\bar{p}\in[1,\infty]^{2}$. For every $f\in L^{\bar p}(\R \times \T)$ there exists a sequence \((g_{j_1,j_2})_{j_1,j_2\ge 1 } \subset L^{\overline{p}}(\R \times \T )\) with $\supp\F g_{j_{1},j_{2}}\subseteq\{\xi\in\R^{2}:|\xi_{1}|\le2^{j_{1}} \; \textrm{and} \; |\xi_{2}|\le2^{j_{2}}\}$ such that \( \lim_{j_1 \wedge j_2 \to \infty} g_{j_1,j_2} = f \) in \(\mathcal{S}^{\prime}(\R^{2})\). Furthermore, there is a constant \(c\) such that
  \begin{align}
    \label{eq: square_l_p_estimate}
    &\|g_{j_{1},j_{2}}^{\square}\|_{\bar{p}} \le c \sup\limits_{|r_i|\le 2^{-j_{i}}} \|\square_{r}f\|_{\bar{p}} ,  \\
    \label{eq: delta_1_l_p_estimate}
    &\|g_{j_1,1}-g_{j_1-1,1}\| _{\bar{p}} \le c \sup\limits_{|r_1|\le 2^{-j_{1}}} \|\delta_{r_1}f\|_{\bar{p}} , \\
    \label{eq: delta_2_l_p_estimate}
    &\|g_{1,j_2}-g_{1,j_2-1}\| _{\bar{p}} \le c \sup\limits_{|r_2|\le 2^{-j_{2}}} \|\delta_{r_2}f\|_{\bar{p}},
  \end{align}
  for \(j_1,j_2 \ge 2 \), and
  \begin{equation*}
    g_{j_{1},j_{2}}^{\square}:=g_{j_{1},j_{2}}-g_{j_{1}-1,j_{2}}-g_{j_{1},j_{2}-1}+g_{j_{1}-1,j_{2}-1}.
  \end{equation*}
\end{lemma}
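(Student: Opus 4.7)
My plan is to take $g_{j_1,j_2}$ to be a de la Vallée Poussin type approximation in each variable. Fix $v \in \mathcal{S}(\R)$ with $\widehat v \in C_c^\infty(\R)$, $\widehat v \equiv 1$ on $[-\tfrac12,\tfrac12]$ and $\supp\widehat v\subseteq[-1,1]$ (so in particular $\int v = \widehat v(0)=1$). Set $v_j(x):=2^j v(2^j x)$ so that $\widehat{v_j}(\xi)=\widehat v(\xi/2^j)$ is supported in $[-2^j,2^j]$, and define the partial convolutions $V^1_j f := v_j *_1 f$, $V^2_j f := v_j *_2 f$, and finally
\begin{equation*}
  g_{j_1,j_2} := V^1_{j_1} V^2_{j_2} f = (v_{j_1}\otimes v_{j_2})*_{\pi_2} f.
\end{equation*}
The requested Fourier support $\{|\xi_1|\le 2^{j_1},|\xi_2|\le 2^{j_2}\}$ is then immediate on the Fourier side. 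Convergence $g_{j_1,j_2}\to f$ in $\mathcal S'(\R^2)$ as $j_1\wedge j_2\to\infty$ follows from the dual identity $\langle g_{j_1,j_2},\varphi\rangle=\langle f,(\tilde v_{j_1}\otimes\tilde v_{j_2})*\varphi\rangle$ with $\tilde v(x):=v(-x)$, together with the standard mollifier fact that $(\tilde v_{j_1}\otimes\tilde v_{j_2})*\varphi\to\varphi$ in $\mathcal S(\R^2)$.

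The heart of the proof is the one-variable Jackson-type estimate
\begin{equation*}
  \|f-V^1_j f\|_{\bar p}\;\le\;c\sup_{|r_1|\le 2^{-j}}\|\delta_{r_1}^1 f\|_{\bar p}\qquad(\star)
\end{equation*}
together with its analogue for $V^2_j$. Since $\int v_j=1$, I would write $f-V^1_j f=-\int v_j(y_1)\,\delta_{-y_1}^1 f\,\mathrm{d}y_1$, apply Minkowski's inequality in $L^{\bar p}$, change variables $z_1=2^j y_1$, and control $\|\delta_{2^{-j}z_1}^1 f\|_{\bar p}\le(1+|z_1|)\sup_{|r_1|\le 2^{-j}}\|\delta_{r_1}^1 f\|_{\bar p}$ by sub-additivity of the modulus of continuity (itself a consequence of the telescoping identity $\delta_{t+s}^1 f=\delta_t^1 f(\cdot+s,\cdot)+\delta_s^1 f$ iterated $\lceil|z_1|\rceil$ times). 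The integral $\int(1+|z_1|)|v(z_1)|\,\mathrm{d}z_1$ is finite because $v\in\mathcal S(\R)$, yielding $(\star)$. This tail estimate is the main obstacle: the integrand $\|\delta_{2^{-j}z_1}^1 f\|_{\bar p}$ grows with $|z_1|$, and one must spend the rapid decay of $v$ to beat this linear growth and recover a clean uniform supremum on the right-hand side.

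The three claimed inequalities follow formally from $(\star)$. Since $V^1_j$ and $V^2_k$ commute,
\begin{equation*}
  g_{j_1,j_2}^\square=(V^2_{j_2}-V^2_{j_2-1})(V^1_{j_1}-V^1_{j_1-1})f.
\end{equation*}
Writing $V^2_{j_2}-V^2_{j_2-1}=(V^2_{j_2}-I)-(V^2_{j_2-1}-I)$ and invoking $(\star)$ in the second variable with $h:=(V^1_{j_1}-V^1_{j_1-1})f$, the task reduces to bounding $\sup_{|r_2|\le 2^{-j_2+1}}\|\delta_{r_2}^2 h\|_{\bar p}$; since $\delta_{r_2}^2$ commutes with $V^1_j$, a second application of $(\star)$ in the first variable to $\delta_{r_2}^2 f$, together with the factorisation $\delta_{r_1}^1\delta_{r_2}^2 f=\square_{r_1,r_2}f$ and a harmless doubling of the radius absorbed by sub-additivity, yields~\eqref{eq: square_l_p_estimate}. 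For~\eqref{eq: delta_1_l_p_estimate} I write $g_{j_1,1}-g_{j_1-1,1}=(V^1_{j_1}-V^1_{j_1-1})V^2_1 f$, apply $(\star)$ in the first variable to $V^2_1 f$, commute $V^2_1$ past $\delta_{r_1}^1$, and dispose of $V^2_1$ by Young's inequality $\|V^2_1\delta_{r_1}^1 f\|_{\bar p}\le\|v_1\|_1\|\delta_{r_1}^1 f\|_{\bar p}$; the bound~\eqref{eq: delta_2_l_p_estimate} is completely symmetric.
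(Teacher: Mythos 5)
Your proposal is correct, and its skeleton coincides with the paper's: the paper likewise takes $g_{j_1,j_2}=(K_{2^{-j_1}}\otimes K_{2^{-j_2}})\ast f$ for a Schwartz kernel $K$ of unit integral with $\supp\mathcal{F}K\subseteq[-1,1]$, and likewise finishes with Minkowski's integral inequality in the mixed norm and the sub-additivity bound $\sup_{|r_i|\le h_i l_i}\|\square_{(r_1,r_2)}f\|_{\bar p}\le(1+h_1)(1+h_2)\sup_{|r_i|\le l_i}\|\square_{(r_1,r_2)}f\|_{\bar p}$. Where you genuinely diverge is in how the second mixed difference $g^{\square}_{j_1,j_2}$ is reduced to rectangular increments of $f$: the paper exploits the exact dilation identity $K_{2^{-j}}(h)=2K_{2^{-(j-1)}}(2h)$ to rewrite all four terms as integrals against the single kernel $K_{2^{-j_1}}\otimes K_{2^{-j_2}}$, so that the combination collapses in one stroke into $\int (K_{2^{-j_1}}\otimes K_{2^{-j_2}})(-h)\,\square_h f(\cdot+h_1,\cdot+h_2)\,\mathrm{d}h$, and the same trick handles the two directional estimates; you instead factor $g^{\square}_{j_1,j_2}=(V^1_{j_1}-V^1_{j_1-1})(V^2_{j_2}-V^2_{j_2-1})f$, prove the univariate Jackson-type estimate $(\star)$ once, and apply it in each variable separately, using that the partial convolutions commute with the partial difference operators and absorbing the enlarged radius $2^{-j_i+1}$ by sub-additivity after splitting $V_j-V_{j-1}=(V_j-I)-(V_{j-1}-I)$. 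Your route is more modular — one scalar lemma does all the work and the tensor structure of the estimates is transparent — while the paper's identity is more ad hoc but produces the rectangular increment directly, without introducing the intermediate operators or the extra triangle-inequality splittings. Two cosmetic points: the normalization $\widehat v\equiv 1$ on $[-\tfrac12,\tfrac12]$ is never used (only $\int v=1$ and the support condition enter), and the convolution defining $g_{j_1,j_2}$ should be the full convolution $\ast$ over $\R^2$ with $f$ periodically extended, not the paper's $\ast_{\pi_2}$, which integrates only over $\R\times\T$; neither remark affects the validity of your argument.
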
 

\begin{proof}
  \emph{Step 1:} We show a first estimate of $\|g_{j_{1},j_{2}}^{\square}\|_{\bar{p}}$ in terms of $\|\square_{r}f\|_{\bar{p}}$.
  To this end, we generalize the approach by \cite[Chapter~5.2.1]{Nikolskji1975} to the rectangular increments. Let $K\colon\R\to\R$ be a Schwartz function with smooth Fourier transform supported in $[-1,1]$ and $\int K_{b}(x)\dd x=1$. Set $K_{a}:=a^{-1}K(a^{-1}\cdot)$ for $a>0$. Then, for \(b_1,b_2\in(0,1)\), the function
  \begin{equation*}
    g_{b_1,b_2}(x) :=  (K_{b_{1}}\otimes K_{b_{2}})\ast f
  \end{equation*}
  satisfies $\supp\mathcal{F}g_{b_1,b_2}\subseteq\{\xi\in\R^{2}:|\xi_{1}|\le b_{1}^{-1} \; \textrm{and} \; |\xi_{2}|\le b_{2}^{-1}\}$ as well as the convergence to $f$ in \(\mathcal{S}^{\prime}(\R^{2})\) for $b_1\vee b_2\to 0$. Furthermore, the sequence is periodic in the second component, which follows by   a simple calculation. Choosing \(b= (2^{-j_1},2^{-j_2}) \), we obtain the sequence \((g_{j_1,j_2})_{j_1,j_2\ge 1 } \) and the first part of the lemma. Now, for the second part we notice that for \(h=(h_1,h_2)\) we have
  \begin{equation*}
    K_{2^{-j_i}}(h_{i})
    = 2^{j_i}  K(2^{j_1} h_{i})
    = 2  2^{j_i-1}  K( 2 2^{j_1-1} h_{i})
    = 2 K_{2^{-(j_i-1)}}(2 h_{i})
  \end{equation*}
  for \(i=1,2\). Hence, we can compute the following 
  \begin{align*}
    (K_{2^{-(j_1-1)}} \otimes K_{2^{-j_2}})\ast f
    &= \int_{\R^2} K_{2^{-(j_1-1)}}(h_1) K_{2^{-j_2}}(h_2) f(x_1-h_1,x_2-h_2) \dd h_1 \dd h_2 \\
    &=  \int_{\R^2} 2 K_{2^{-(j_1-1)}}( 2h_1) K_{2^{-j_2}}(h_2) f(x_1-2h_1,x_2-h_2) \dd h_1 \dd h_2 \\
    &=  \int_{\R^2} K_{2^{-j_i}}(h_{1}) K_{2^{-j_2}}(h_2) f(x_1-2h_1,x_2-h_2) \dd h_1 \dd h_2  .
  \end{align*}
  In a similar manner we obtain
  \begin{equation*}
    (K_{2^{-j_1}} \otimes K_{2^{-(j_2-1)}})\ast f
    =  \int_{\R^2} K_{2^{-j_i}}(h_{1}) K_{2^{-j_2}}(h_2) f(x_1-h_1,x_2-2h_2) \dd h_1 \dd h_2
  \end{equation*}
  and 
  \begin{equation*}
    (K_{2^{-(j_1-1)}} \otimes K_{2^{-(j_2-1)}})\ast f
    =  \int_{\R^2} K_{2^{-j_i}}(h_{1}) K_{2^{-j_2}}(h_2) f(x_1-2h_1,x_2-2h_2) \dd h_1 \dd h_2.
  \end{equation*}
  Consequently, we find 
  \begin{align*}
    g_{j_{1},j_{2}}^{\square}
    =& \;  (K_{2^{-j_1}} \otimes K_{2^{-j_2}})\ast f -(K_{2^{-(j_1-1)}} \otimes K_{2^{-j_2}})\ast f  -  (K_{2^{-j_1}} \otimes K_{2^{-(j_2-1)}})\ast f \\
    &\qquad+ (K_{2^{-(j_1-1)}} \otimes K_{2^{-(j_2-1)}})\ast f \\
    = & \;  \int_{\R^2} K_{2^{-j_i}}(h_{1}) K_{2^{-j_2}}(h_2) ( f(x_1-h_1,x_2-h_2) \\
    &\qquad- f(x_1-2h_1,x_2-h_2) - f(x_1-h_1,x_2-2h_2)+ f(x_1-2h_1,x_2-2h_2)) \dd h_1 \dd h_2   \\
    = & \;  \int_{\R^2} K_{2^{-j_i}}(-h_{1}) K_{2^{-j_2}}(-h_2) ( f(x_1+h_1,x_2+h_2) \\
    &\qquad- f(x_1+2h_1,x_2+h_2) - f(x_1+h_1,x_2+2h_2)  + f(x_1+2h_1,x_2+2h_2)) \dd h_1 \dd h_2  \\
    =& \;  \int_{\R^2} K_{2^{-j_i}}(-h_{1}) K_{2^{-j_2}}(-h_2) \square_{h}f(x_{1}+h_1,x_{2}+h_2) \dd h_1 \dd h_2.
  \end{align*}
  Next, Minkowski's integral inequality yields
  \begin{align*}
    \|g_{j_{1},j_{2}}^{\square}\|_{\bar{p}} & =\Big(\int_{\R}\Big(\int_{\T}\Big|\int_{\R^{2}}(K_{2^{-j_1}} \otimes K_{2^{-j_2}})(-h) \square_{h}f(x_{1}+h_1,x_{2}+h_2) \dd h\Big|^{p_{2}}\dd x_{2}\Big)^{p_{1}/p_{2}}\dd x_{1}\Big)^{\frac{1}{p_{1}}}\\
    & \le \Big(\int_{\R}\Big(\int_{\R^{2}}|(K_{b_{1}}\otimes K_{b_{2}})(-h)|\Big(\int_{\T}\big| \square_{h}f(x_{1}+h_1,x_{2}+h_2) \big|^{p_{2}}\dd x_{2}\Big)^{1/p_{2}}\dd h\Big)^{p_{1}}\dd x_{1}\Big)^{\frac{1}{p_{1}}}\\
    & \le \int_{\R^{2}}|(K_{2^{-j_1}} \otimes K_{2^{-j_2}})(-h) |\|\square_{h}f(\cdot+h_1,\cdot+h_2)\|_{\bar{p}}\dd h\\
    & = \int_{\R^{2}}|(K_{2^{-j_1}} \otimes K_{2^{-j_2}})(-h) |\|\square_{h}f \|_{\bar{p}}\dd h\\
    & \le\int_{\R^{2}}|(K_{2^{-j_1}} \otimes K_{2^{-j_2}})(-h) |\sup_{|r_{i}|\le|h_{i}|}\|\square_{r}f\|_{\bar{p}}\dd h\\
    & =\int_{\R^{2}}|(K\otimes K)(-h)|\sup_{|r_{i}|\le 2^{-j_i}|h_{i}|}\|\square_{r}f\|_{\bar{p}}\dd h.
  \end{align*}
  
  \emph{Step 2:}  We will prove \eqref{eq: square_l_p_estimate}. In order to estimate $\sup_{|r_{i}|\le 2^{-j_i}|h_{i}|}\|\square_{r}f\|_{\bar{p}}$ in the previous line, we need to establish the following property
  \begin{equation}\label{eq: square_scaling}
    \sup\limits_{|r_i| \le h_{i} l_{i}} \|\square_{(r_1,r_2)} \tilde{f}\|_{\bar{p}}
    \le (1+h_1)(1+h_2)  \sup\limits_{|r_i| \le l_{i}} \|\square_{(r_1,r_2)} \tilde{f}\|_{\bar{p}}
  \end{equation}
  for \(h=(h_1,h_2)$, $l=(l_1,l_2) \in (0,\infty)^2 \) and $ \tilde{f} \in L^{\overline{p}}(\R^{2})$. %First, we note that for integer $n_1 \in\N, n_2 \in \N $ and \(|r_i| \le n_{i} l_{i}\) there exists a \(\tilde{r}_{i} \) with the same sign as \(r_{i}\) such that \(|\tilde{r}_{i}| \le l_{i} \) and \(\tilde{r}_{i} n_{i} = r_{i}\) for \(i=1,2\). Hence, we obtain
  First, we note that, for integers $n_1, n_2 \in \N $ and \(|r_i| \le n_{i} l_{i}\), we obtain with \(\tilde{r}_{i} := r_{i}/n_{i} \) for \(i=1,2\) that
  \begin{align*}
    \|\square_{(r_1,r_2)} \tilde{f}\|_{\bar{p}}
    = & \|\delta_{r_2} ( \tilde{f}(\cdot+r_1,\cdot)-\tilde{f}(\cdot,\cdot) ) \|_{\bar{p}} \\
    = & \|\delta_{r_2} ( \tilde{f}(\cdot+\tilde{r}_{1} n_{i},\cdot)-\tilde{f}(\cdot,\cdot) ) \|_{\bar{p}}  \\
    = & \Big\|\delta_{r_2} \Big( \sum\limits_{j=1}^{n_1} \tilde{f}(\cdot+j \tilde{r}_{1} ,\cdot)-\tilde{f}(\cdot+(j-1)\tilde{r}_{1},\cdot) \Big) \Big\|_{\bar{p}}  \\
    \le & \sum\limits_{j=1}^{n_1} \|\delta_{r_2} ( \tilde{f}(\cdot+j \tilde{r}_{1} ,\cdot)-\tilde{f}(\cdot+(j-1)\tilde{r}_{1},\cdot) ) \|_{\bar{p}}  \\
    =& n_1\| \delta_{r_2} ( \tilde{f}(\cdot+ \tilde{r}_{1} ,\cdot)-\tilde{f}(\cdot,\cdot) ) \|_{\bar{p}} \\
    =& n_1\| \delta_{\tilde r_1} ( \tilde{f}(\cdot,\cdot + n_2\tilde r_2)-\tilde{f}(\cdot,\cdot) ) \|_{\bar{p}} \\
    \le & n_1\sum\limits_{j=1}^{n_2} \|\delta_{\tilde r_1} ( \tilde{f}(\cdot,\cdot+j \tilde{r}_{2})-\tilde{f}(\cdot,\cdot+(j-1)\tilde{r}_{2}) \|_{\bar{p}}  \\
    %   =& n_1\| (\tilde{f}(\cdot+ \tilde{r}_{1} ,\cdot+ n_2 \tilde{r}_{2})-\tilde{f}(\cdot,\cdot+n_2 \tilde{r}_{2}) ) -( \tilde{f}(\cdot+ \tilde{r}_{1} ,\cdot)-\tilde{f}(\cdot,\cdot) ) \|_{\bar{p}} \\
    %   \le &  n_1 \sum\limits_{j=1}^{n_2} \Big \|  \Big( (\tilde{f}(\cdot+ \tilde{r}_{1} ,\cdot+  j \tilde{r}_{2})-\tilde{f}(\cdot,\cdot+ j \tilde{r}_{2}) ) \\
    %   &-( \tilde{f}(\cdot + \tilde{r}_{1} ,\cdot+ (j-1) \tilde{r}_2 )-\tilde{f}(\cdot,\cdot+ (j-1) \tilde{r}_2) ) \Big)\Big \|_{\bar{p}} \\
    = & n_1 n_2\Big \|  \square_{(\tilde r_1,\tilde r_2)}\tilde{f} \Big\|_{\bar{p}} .
  \end{align*}
  Hence, $ \sup\limits_{|r_i| \le n_{i} l_{i}} \|\square_{(r_1,r_2)} \tilde{f}\|_{\bar{p}} \le  n_1 n_2  \sup\limits_{|r_i| \le  l_{i}} \|\square_{(r_1,r_2)} \tilde{f}\|_{\bar{p}}$. Using this estimate we deduce \eqref{eq: square_scaling} as follows
  \begin{align*}
    \sup\limits_{|r_i| \le h_{i} l_{i}} \|\square_{(r_1,r_2)} \tilde{f}\|_{\bar{p}}
    &\le \sup\limits_{|r_i| \le \lceil h_{i} \rceil l_{i}} \|\square_{(r_1,r_2)} \tilde{f}\|_{\bar{p}} \\
    &\le  \lceil h_{1} \rceil  \lceil h_{2} \rceil
    \sup\limits_{|r_i| \le l_{i}} \|\square_{(r_1,r_2)} \tilde{f}\|_{\bar{p}} \\
    &\le (1+h_1)(1+h_2)  \sup\limits_{|r_i| \le l_{i}} \|\square_{(r_1,r_2)} \tilde{f}\|_{\bar{p}},
  \end{align*}
  where $\lceil z \rceil := \inf \{n\in \N : n\geq z\} $ for $z\in \R$. In combination with Step~1 we conclude
  \begin{align*}
    \|g_{j_{1},j_{2}}^{\square}\|_{\bar{p}} 
    & \le  \int_{\R^{2}}|(K\otimes K)(-h)|\sup_{|r_{i}|\le 2^{-j_i}|h_{i}|}\|\square_{r}f\|_{\bar{p}}\dd h \\
    & \le \int_{\R^{2}}|(K\otimes K)(-h)|(1+|h_{1}|)(1+|h_{2}|)\sup_{|r_{i}|\le 2^{-j_i}}\|\square_{r}f\|_{\bar{p}}\dd h\\
    & =\sup_{|r_{i}|\le 2^{-j_i}}\|\square_{r}f\|_{\bar{p}}\Big(\int_{\R}K(h)(1+|h|)\dd h\Big)^{2}.
  \end{align*}
  Since $K$ is rapidly decaying the last integral is finite and the estimate \eqref{eq: square_l_p_estimate} is done.

  \emph{Step 3:} We want to show \eqref{eq: delta_1_l_p_estimate} and \eqref{eq: delta_2_l_p_estimate}. We use the same representation as before, namely
  \begin{equation*}
    (K_{2^{-(j_1-1)}} \otimes K_{2^{-1}})\ast f
    =  \int_{\R^2} K_{2^{-j_i}}(h_{1}) K_{2^{-1}}(h_2) f(x_1-2h_1,x_2-h_2) \dd h_1 \dd h_2
  \end{equation*}
  to obtain 
  \begin{align*}
    g_{j_1,1}-g_{j_1-1,1}
    &= (K_{2^{-(j_1)}} \otimes K_{2^{-1}})\ast f - (K_{2^{-(j_1-1)}} \otimes K_{2^{-1}})\ast f  \\
    &= \int_{\R^2} K_{2^{-j_1}}(h_{1}) K_{2^{-1}}(h_2) (f(x_1-h_1,x_2-h_2) - f(x_1-2h_1,x_2-h_2) ) \dd h_1 \dd h_2  \\
    &=\int_{\R^2} K_{2^{-j_1}}(-h_{1}) K_{2^{-1}}(-h_2) \delta_{h_1}f(x_1+h_1,x_2+h_2)  \dd h_1 \dd h_2 .
  \end{align*}
  Again, applying Minkowski's integral inequality and a variable transformation in \(x_1 \) we find 
  \begin{align*}
    \|g_{j_1,1}-g_{j_1-1,1} \|_{\bar{p}}
    &\le \int_{\R^{2}}|(K_{2^{-j_1}} \otimes K_{2^{-1}})(-h) |  \|\delta_{h_1}f \|_{\bar{p}}\dd h\\
    &\lesssim \int_{\R} |(K_{2^{-j_1}}(-h_1)|  \|\delta_{h_1}f \|_{\bar{p}}\dd h_1\\
    &\le \int_{\R} |K(-h_1)|  \sup\limits_{|r_1| \le 2^{-j_1}|h_1|}  \|\delta_{r_1}f \|_{\bar{p}}\dd h_1 .
  \end{align*}
  Analogously to \eqref{eq: square_scaling} or using \cite[Chapter~4.2 eq.~(8)]{Nikolskji1975} we have
  \begin{equation*}
    \sup\limits_{|r_1| \le 2^{-j_1}|h_1|}  \|\delta_{r_1}f \|_{\bar{p}}
    \le (1+|h_1|) \sup\limits_{|r_1| \le 2^{-j_1}}\|\delta_{r_1}f \|_{\bar{p}}.
  \end{equation*}
  Hence, we obtain
  \begin{equation*}
    \|g_{j_1,1}-g_{j_1-1,1} \|_{\bar{p}}
    \le  \sup\limits_{|r_1| \le 2^{-j_1}}\|\delta_{r_1}f \|_{\bar{p}} \int_{\R} |K(-h_1)|(1+|h_1|)  \dd h_.
  \end{equation*}
  As before, the integral on the right hand side is finite since \(K\) is rapidly decreasing and therefore \eqref{eq: delta_1_l_p_estimate} is proved.
  In the same way we can verify \eqref{eq: delta_2_l_p_estimate}.
\end{proof}

With these auxiliary lemmata at hand we can prove over main theorem.

\begin{proof}[Proof of Theorem~\ref{thm:characteristation}]
  \emph{Step~1:} We show $\|f\|_{\bar{\alpha},\bar{p},\bar{q}}^{(1)}\lesssim\|f\|_{\bar{\alpha},\bar{p},\bar{q}}$ for all $f\in B_{\overline{p},\overline{q}}^{\overline{\alpha}}(\R \times \T)$.

  Let $f\in B_{\bar{p},\bar{q}}^{\bar{\alpha}}(\R \times \T)$ with anisotropic Littlewood--Paley decomposition $f=\sum_{j,k\ge-1}\Delta_{j,k}f.$ Due to Lemma~\ref{lem:embeddings}, we first note that $f\in L^{\overline{p}}(\R \times \T)$ and $\|f\|_{\bar{p}}\lesssim\|f\|_{\bar{p},\bar{q}}^{\bar{\alpha}}$.

  By the triangle inequality and Lemma~\ref{lem:aux1}, we can bound
  \begin{align*}
    \sup_{|r_{i}|\le2^{-k_{i}}}\|\square_{r_{1},r_{2}}f\|_{\bar p} & =\sup_{|r_{i}|\le2^{-k_{i}}}\Big(\int_{\R}\Big(\int_{\T}|\square_{(r_{1},r_{2})}f(x_{1},x_{2})|^{p_{2}}\dd x_{2}\Big)^{p_{1}/p_{2}}\d x_{1}\Big)^{1/p_{1}}\\
    & \le\sum_{j_{1},j_{2}\ge-1}\sup_{|r_{i}|\le2^{-k_{i}}}\|\square_{(r_{1},r_{2})}(\Delta_{j_{1},j_{2}}f)\|_{\bar p}\\
    & \lesssim\sum_{j_{1},j_{2}\ge-1}\min(1,2^{(j_{1}-k_{1})})\min(1,2^{(j_{2}-k_{2})})\|f_{j_{1},j_{2}}^{*}\|_{\bar p}.
  \end{align*}
  Discretizing the integrals, we have
  \begin{align*}
    & \Big(\int_{\R}|h_{1}|^{-\alpha_{1}q_{1}}\Big(\int_{\T}|h_{2}|^{-\alpha_{2}q_{2}}\sup_{|r_{1}|\le|h_{1}|,|r_{2}|\le h_{2}}\|\square_{r_{1},r_{2}}f\|_{\overline{p}}^{q_{2}}\frac{\dd h_{2}}{|h_{2}|}\Big)^{q_{1}/q_{2}}\frac{\dd h_{1}}{|h_{1}|}\Big)^{1/q_{1}} \\
    & \quad\lesssim \Big(\sum_{k_{1}=0}^{\infty}2^{\alpha_{1}q_{1}k_{1}}\Big(\sum_{k_{2}=0}^{\infty}2^{\alpha_{2}q_{2}k_{2}}\sup_{|r_{i}|\le2^{-k_{i}}}\|\square_{r_{1},r_{2}}f\|_{\bar p}^{q_{2}}\Big)^{q_{1}/q_{2}}\Big)^{1/q_{1}}\\
    & \quad\lesssim \Big(\sum_{k_{1}=0}^{\infty}2^{\alpha_{1}q_{1}k_{1}}\Big(\sum_{k_{2}=0}^{\infty}2^{\alpha_{2}q_{2}k_{2}}\Big(\sum_{j_{1}=-1}^{k_{1}}\sum_{j_{2}=-1}^{k_{2}}2^{(j_{1}-k_{1})+(j_{2}-k_{2})}\|f_{j_{1},j_{2}}^{*}\|_{\bar p}\Big)^{q_{2}}\Big)^{q_{1}/q_{2}}\Big)^{1/q_{1}}\\
    & \quad\qquad+\Big(\sum_{k_{1}=0}^{\infty}2^{\alpha_{1}q_{1}k_{1}}\Big(\sum_{k_{2}=0}^{\infty}2^{\alpha_{2}q_{2}k_{2}}\Big(\sum_{j_{1}=-1}^{k_{1}}\sum_{j_{2}>k_{2}}2^{(j_{1}-k_{1})}\|f_{j_{1},j_{2}}^{*}\|_{\bar p}\Big)^{q_{2}}\Big)^{q_{1}/q_{2}}\Big)^{1/q_{1}}\\
    & \quad\qquad+\Big(\sum_{k_{1}=0}^{\infty}2^{\alpha_{1}q_{1}k_{1}}\Big(\sum_{k_{2}=0}^{\infty}2^{\alpha_{2}q_{2}k_{2}}\Big(\sum_{j_{1}>k_{1}}\sum_{j_{2}=-1}^{k_{2}}2^{(j_{2}-k_{2})}\|f_{j_{1},j_{2}}^{*}\|_{\bar p}\Big)^{q_{2}}\Big)^{q_{1}/q_{2}}\Big)^{1/q_{1}}\\
    & \quad\qquad+\Big(\sum_{k_{1}=0}^{\infty}2^{\alpha_{1}q_{1}k_{1}}\Big(\sum_{k_{2}=0}^{\infty}2^{\alpha_{2}q_{2}k_{2}}\Big(\sum_{j_{1}>k_{1}}\sum_{j_{2}>k_{2}}\|f_{j_{1},j_{2}}^{*}\|_{\bar p}\Big)^{q_{2}}\Big)^{q_{1}/q_{2}}\Big)^{1/q_{1}}\\
    & \quad=:T_{1}+T_{2}+T_{3}+T_{4}.
  \end{align*}
  For some $\varepsilon\in(0,\min\{1-\alpha_{1},1-\alpha_{2}\})$, we can bound with Jensen's inequality (or H{\"o}lder inequality)
  \begin{align*}
    T_{1}^{q_{1}} & =\sum_{k_{1}=0}^{\infty}\Big(\sum_{k_{2}=0}^{\infty}\Big(\sum_{j_{1}=-1}^{k_{1}}\sum_{j_{2}=-1}^{k_{2}}2^{\big((1-\alpha_{1})(j_{1}-k_{1})+(1-\alpha_{2})(j_{2}-k_{2})\big)}2^{(\alpha_{1}j_{1}+\alpha_{2}j_{2})}\|f_{j_{1},j_{2}}^{*}\|_{\bar p}\Big)^{q_{2}}\Big)^{\frac{q_{1}}{q_{2}}}\\
    & \lesssim\sum_{k_{1}=0}^{\infty}\Big(\sum_{k_{2}=0}^{\infty}\sum_{j_{1}=-1}^{k_{1}}\sum_{j_{2}=-1}^{k_{2}}2^{\big((1-\alpha_{1}-\epsilon)(j_{1}-k_{1})+(1-\alpha_{2}-\epsilon)(j_{2}-k_{2})\big)q_{2}}2^{(\alpha_{1}j_{1}+\alpha_{2}j_{2})q_{2}}\|f_{j_{1},j_{2}}^{*}\|_{\bar p}^{q_{2}}\Big)^{\frac{q_{1}}{q_{2}}}\\
    & =\sum_{k_{1}=0}^{\infty}\Big(\sum_{j_{1}=-1}^{k_{1}}2^{(1-\alpha_{1}-\epsilon)(j_{1}-k_{1})q_{2}}2^{\alpha_{1}j_{1}q_{2}}\sum_{j_{2}=0}^{\infty}\sum_{j_{2} \le k_{2}}2^{(1-\alpha_{2}-\epsilon)(j_{2}-k_{2})q_{2}}2^{\alpha_{2}j_{2}q_{2}}\|f_{j_{1},j_{2}}^{*}\|_{\bar p}^{q_{2}}\Big)^{\frac{q_{1}}{q_{2}}}\\
    & \lesssim\sum_{k_{1}=0}^{\infty}\Big(\sum_{j_{1}=-1}^{k_{1}}2^{(1-\alpha_{1}-\epsilon)(j_{1}-k_{1})q_{2}}2^{\alpha_{1}j_{1}q_{2}}\Big\|\big(2^{\alpha_{2}j_{2}}\|f_{j_{1},j_{2}}^{*}\|_{\bar p}\big)_{j_{2}}\Big\|_{\ell_{q_{2}}}^{q_{2}}\Big)^{q_{1}/q_{2}}\\
    & \lesssim\Big\|\Big(2^{\alpha_{1}j_{1}}\Big\|\big(2^{\alpha_{2}j_{2}}\|f_{j_{1},j_{2}}^{*}\|_{\bar p}\big)_{j_{2}}\Big\|_{\ell_{q_{2}}}\Big)_{j_{1}}\Big\|_{\ell_{q_{1}}}^{q_{1}}.
  \end{align*}
  Moreover, for some $\epsilon\in(0,\min\{1-\alpha_{1},\alpha_{2}\})$ we have
  \begin{align*}
    T_{2}^{q_{1}} & =\sum_{k_{1}=0}^{\infty}\Big(\sum_{k_{2}=0}^{\infty}\Big(\sum_{j_{1}=-1}^{k_{1}}\sum_{j_{2}>k_{2}}2^{\big((1-\alpha_{1})(j_{1}-k_{1})-\alpha_{2}(j_{2}-k_{2})\big)}2^{(\alpha_{1}j_{1}+\alpha_{2}j_{2})}\|f_{j_{1},j_{2}}^{*}\|_{\bar p}\Big)^{q_{2}}\Big)^{\frac{q_{1}}{q_{2}}}\\
    & \lesssim\sum_{k_{1}=0}^{\infty}\Big(\sum_{j_{1}=-1}^{k_{1}}2^{(1-\alpha_{1}-\epsilon)(j_{1}-k_{1})q_{2}}2^{\alpha_{1}j_{1}q_{2}}\sum_{j_{2}=-1}^{\infty}\sum_{k_{2}=-1}^{j_{2}-1}2^{-(\alpha_{2}-\epsilon)(j_{2}-k_{2})q_{2}}2^{\alpha_{2}j_{2}q_{2}}\|f_{j_{1},j_{2}}^{*}\|_{\bar p}^{q_{2}}\Big)^{\frac{q_{1}}{q_{2}}}\\
    & =\sum_{k_{1}=0}^{\infty}\Big(\sum_{j_{1}=-1}^{k_{1}}2^{(1-\alpha_{1}-\epsilon)(j_{1}-k_{1})q_{2}}2^{\alpha_{1}j_{1}q_{2}}\Big\|\big(2^{\alpha_{2}j_{2}}\|f_{j_{1},j_{2}}^{*}\|_{\bar p}\big)_{j_{2}}\Big\|_{\ell_{q_{2}}}^{q_{2}}\Big)^{\frac{q_{1}}{q_{2}}}\\
    & \lesssim\Big\|\Big(2^{\alpha_{1}j_{1}}\Big\|\big(2^{\alpha_{2}j_{2}}\|f_{j_{1},j_{2}}^{*}\|_{\bar p}\big)_{j_{2}}\Big\|_{\ell_{q_{2}}}\Big)_{j_{1}}\Big\|_{\ell_{q_{1}}}^{q_{1}}.
  \end{align*}
  The remaining two terms can be estimated analogously. Therefore, we get
  \begin{align*}
    &\Big(\int_{\R}|h_{1}|^{-\alpha_{1}q_{1}}\Big(\int_{\T}|h_{2}|^{-\alpha_{2}q_{2}}\sup_{|r_{1}|\le|h_{1}|,|r_{2}|\le h_{2}}\|\square_{r_{1},r_{2}}f\|_{\overline{p}}^{q_{2}}\frac{\dd h_{2}}{|h_{2}|}\Big)^{q_{1}/q_{2}}\frac{\dd h_{1}}{|h_{1}|}\Big)^{1/q_{1}} \\
    &\quad\lesssim \Big\|\Big(2^{\alpha_{1}j_{1}}\Big\|\big(2^{\alpha_{2}j_{2}}\|f_{j_{1},j_{2}}^{*}\|_{\bar p}\big)_{j_{2}}\Big\|_{\ell_{q_{2}}}\Big)_{j_{1}}\Big\|_{\ell_{q_{1}}}.
  \end{align*}
  For the increments in one direction we discretize the one-dimensional integral and apply  Lemma~\ref{lem:aux1} to find    
  \begin{align*}
    &\Big(\int_{\R} h_1^{-\alpha_1 q_1}  \sup_{|r_{1}|\le|h_{1}|}\| \delta_{h_1} f \|_{\bar{p}}^{q_1} \frac{\dd h_1 }{|h_1|} \Big)^{\frac{1}{q_1}}\\
    &\quad\lesssim \Big(\sum\limits_{k_1=0}^\infty 2^{\alpha_1 k_1 q_1} \sup\limits_{|r_1| \le 2^{-k_1}} \| \delta_{h_1} f \|_{\bar{p}}^{q_1}  \Big)^{\frac{1}{q_1}}  \\
    &\quad\lesssim  \Big(\sum\limits_{k_1=0}^\infty 2^{\alpha_1 k_1 q_1} \Big( \sum\limits_{j_1=-1}^\infty  \sum\limits_{j_2=-1}^\infty \min(1,2^{(j_{1}-k_{1})}) \|f_{j_{1},j_{2}}^{*}\|_{\bar p} \Big)^{q_1} \Big)^{\frac{1}{q_1}} \\
    &\quad\lesssim  \Big(\sum\limits_{k_1=0}^\infty 2^{\alpha_1 k_1 q_1} \Big( \sum\limits_{j_1=-1}^{k_1}  \sum\limits_{j_2=-1}^\infty 2^{(j_{1}-k_{1})} \|f_{j_{1},j_{2}}^{*}\|_{\bar p} \Big)^{q_1} \Big)^{\frac{1}{q_1}} \\
    &\quad\quad  +  \Big(\sum\limits_{k_1=0}^\infty 2^{\alpha_1 k_1 q_1} \Big( \sum\limits_{j_1=k_1+1}^\infty  \sum\limits_{j_2=-1}^\infty \|f_{j_{1},j_{2}}^{*}\|_{\bar p} \Big)^{q_1} \Big)^{\frac{1}{q_1}} .
  \end{align*}
  In order to treat the first term we fix some \(\epsilon \in (0, 1 - \alpha_1 )\). Then, we get
  \begin{align*}
    & \Big(\sum\limits_{k_1=0}^\infty 2^{\alpha_1 k_1 q_1} \Big(\sum\limits_{j_2=-1}^\infty \sum\limits_{j_1=-1}^{k_1}   2^{(j_{1}-k_{1})} \|f_{j_{1},j_{2}}^{*}\|_{\bar p} \Big)^{q_1} \Big)^{\frac{1}{q_1}} \\
    &\quad\le \,  \Big(\sum\limits_{k_1=0}^\infty 2^{\alpha_1 k_1 q_1} \Big(  \sum\limits_{j_2=-1}^\infty \Big( \sum\limits_{j_1=-1}^{k_1} 2^{j_2 \alpha_2} 2^{(j_{1}-k_{1})} \|f_{j_{1},j_{2}}^{*}\|_{\bar p} \Big)^{q_2} \Big)^{\frac{q_1}{q_2}} \Big)^{\frac{1}{q_1}}
    \Big( \sum\limits_{j_2=-1}^\infty 2^{-\alpha_2 \frac{q_2}{q_2-1} j_2} \Big)^{\frac{q_2-1}{q_2}} \\
    &\quad\lesssim  \,  \Big(\sum\limits_{k_1=0}^\infty  \Big( \sum\limits_{j_1=-1}^{k_1} 2^{\alpha_1 k_1}  2^{(j_{1}-k_{1})} \Big( \sum\limits_{j_2=-1}^\infty \Big(  2^{j_2 \alpha_2}  \|f_{j_{1},j_{2}}^{*}\|_{\bar p} \Big)^{q_2} \Big)^{\frac{1}{q_2}} \Big)^{q_1} \Big)^{\frac{1}{q_1}}  \\
    &\quad=  \,  \Big(\sum\limits_{k_1=0}^\infty  \Big( \sum\limits_{j_1=-1}^{k_1} 2^{\alpha_1 j_1}  2^{(1-\alpha_1-\epsilon)(j_{1}-k_{1})} 2^{\epsilon (j_{1}-k_{1})} \Big( \sum\limits_{j_2=-1}^\infty \Big(  2^{j_2 \alpha_2}  \|f_{j_{1},j_{2}}^{*}\|_{\bar p} \Big)^{q_2} \Big)^{\frac{1}{q_2}} \Big)^{q_1} \Big)^{\frac{1}{q_1}}  \\
    &\quad\lesssim \,  \Big(\sum\limits_{k_1=0}^\infty   \sum\limits_{j_1=-1}^{k_1} 2^{\alpha_1 j_1 q_1 }  2^{(1-\alpha_1-\epsilon)(j_{1}-k_{1}) q_1} \Big( \sum\limits_{j_2=-1}^\infty \Big(  2^{j_2 \alpha_2}  \|f_{j_{1},j_{2}}^{*}\|_{\bar p} \Big)^{q_2} \Big)^{\frac{q_1}{q_2}}  \Big)^{\frac{1}{q_1}}  \\
    &\quad=  \,  \Big(\sum\limits_{j_1=-1}^\infty   2^{\alpha_1 j_1 q_1 }   \Big( \sum\limits_{j_2=-1}^\infty \Big(  2^{j_2 \alpha_2}  \|f_{j_{1},j_{2}}^{*}\|_{\bar p} \Big)^{q_2} \Big)^{\frac{q_1}{q_2}}  \Big)^{\frac{1}{q_1}}
    \Big( \sum\limits_{k_1=j_1}^{\infty} 2^{- (1-\alpha_1-\epsilon) k_1-j_1 q_1} \Big)^{\frac{1}{q_1}} \\
    &\quad\lesssim  \, \Big(\sum\limits_{j_1=-1}^\infty   2^{\alpha_1 j_1 q_1 }  \Big( \sum\limits_{j_2=-1}^\infty \Big(  2^{j_2 \alpha_2}  \|f_{j_{1},j_{2}}^{*}\|_{\bar p} \Big)^{q_2} \Big)^{\frac{q_1}{q_2}}  \Big)^{\frac{1}{q_1}} ,
  \end{align*}
  where we used H{\"o}lder's inequality in the first and forth step and the triangle inequality in the second step. Similar, for some \(\epsilon \in (0 , \alpha_1)\) we can estimate the second term as follows
  \begin{align*}
    &  \Big(\sum\limits_{k_1=0}^\infty 2^{\alpha_1 k_1 q_1} \Big( \sum\limits_{j_1=k_1+1}^\infty   \sum\limits_{j_2=-1}^\infty \|f_{j_{1},j_{2}}^{*}\|_{\bar p} \Big)^{q_1} \Big)^{\frac{1}{q_1}}  \\
    &\quad\lesssim \,  \Big(\sum\limits_{k_1=0}^\infty 2^{\alpha_1 k_1 q_1} \Big(  \sum\limits_{j_2=-1}^\infty \Big(\sum\limits_{j_1=k_1+1}^\infty 2^{j_2 \alpha_2} \|f_{j_{1},j_{2}}^{*}\|_{\bar p} \Big)^{q_2} \Big)^{\frac{q_1}{q_2}} \Big)^{\frac{1}{q_1}} \\
    &\quad\le \,  \Big(\sum\limits_{k_1=0}^\infty  \Big( \sum\limits_{j_1=k_1+1}^\infty 2^{\alpha_1 k_1 } \Big( \sum\limits_{j_2=-1}^\infty  2^{j_2 \alpha_2 q_2} \|f_{j_{1},j_{2}}^{*}\|_{\bar p}^{q_2} \Big)^{\frac{1}{q_2}} \Big)^{q_1} \Big)^{\frac{1}{q_1}}  \\
    &\quad=  \,  \Big(\sum\limits_{k_1=0}^\infty  \Big( \sum\limits_{j_1=k_1+1}^\infty 2^{\alpha_1 j_1}  2^{ - (\alpha_1-\epsilon) (j_1-k_1) } 2^{-\epsilon(j_1-k_1) } \Big( \sum\limits_{j_2=-1}^\infty  2^{j_2 \alpha_2 q_2} \|f_{j_{1},j_{2}}^{*}\|_{\bar p}^{q_2} \Big)^{\frac{1}{q_2}} \Big)^{q_1} \Big)^{\frac{1}{q_1}}  \\
    &\quad\lesssim \,  \Big(\sum\limits_{k_1=0}^\infty   \sum\limits_{j_1=k_1+1}^\infty 2^{\alpha_1 j_1 q_1}  2^{ - (\alpha_1-\epsilon) (j_1-k_1)q_1 } \Big( \sum\limits_{j_2=-1}^\infty  2^{j_2 \alpha_2 q_2} \|f_{j_{1},j_{2}}^{*}\|_{\bar p}^{q_2} \Big)^{\frac{q_1}{q_2}}  \Big)^{\frac{1}{q_1}}  \\
    &\quad\le  \,  \Big(\sum\limits_{j_1=-1}^\infty   2^{\alpha_1 j_1 q_1}   \Big( \sum\limits_{j_2=-1}^\infty  2^{j_2 \alpha_2 q_2} \|f_{j_{1},j_{2}}^{*}\|_{\bar p}^{q_2} \Big)^{\frac{q_1}{q_2}}  \Big)^{\frac{1}{q_1}} \Big( \sum\limits_{k_1=0}^\infty    2^{ - (\alpha_1-\epsilon) k_1 q_1 } \Big)^{\frac{1}{q_1}}  \\
    &\quad\lesssim  \,  \Big(\sum\limits_{j_1=-1}^\infty   2^{\alpha_1 j_1 q_1}   \Big( \sum\limits_{j_2=-1}^\infty  2^{j_2 \alpha_2 q_2} \|f_{j_{1},j_{2}}^{*}\|_{\bar p}^{q_2} \Big)^{\frac{q_1}{q_2}}  \Big)^{\frac{1}{q_1}},
  \end{align*}
  where we used H{\"o}lder's inequality in the first and forth step and the triangle inequality in the second step. Hence, we arrive at
  \begin{align*}
    \Big(\int_{\R} h_1^{-\alpha_1 q_1}  \| \delta_{h_1} f \|_{\bar{p}}^{q_1} \frac{\dd h_1 }{|h_1|} \Big)^{\frac{1}{q_1}}
    \lesssim  \Big\|\Big(2^{\alpha_{1}j_{1}}\Big\|\big(2^{\alpha_{2}j_{2}}\|f_{j_{1},j_{2}}^{*}\|_{\bar p}\big)_{j_{2}}\Big\|_{\ell_{q_{2}}}\Big)_{j_{1}}\Big\|_{\ell_{q_{1}}}.
  \end{align*}
  Performing the analogously steps with the integrand \(\delta_{h_2}f\) and combining these estimations with Corollary~\ref{cor:aux2}, we conclude
  \begin{align*}
    \|f\|_{\bar{\alpha},\bar{p},\bar{q}}^{(1)} & \lesssim\|f\|_{\bar{p}}+\Big\|\Big(2^{\alpha_{1}j_{1}}\Big\|\big(2^{\alpha_{2}j_{2}}\|f_{j_{1},j_{2}}^{*}\|_{\bar p}\big)_{j_{2}}\Big\|_{\ell_{q_{2}}}\Big)_{j_{1}}\Big\|_{\ell_{q_{1}}}\\
    & \lesssim\Big\|\Big(2^{\alpha_{1}j_{1}}\Big\|\big(2^{\alpha_{2}j_{2}}\|\Delta_{j_{1},j_{2}}f\|_{\bar p}\big)_{j_{2}}\Big\|_{\ell_{q_{2}}}\Big)_{j_{1}}\Big\|_{\ell_{q_{1}}}=\|f\|_{\bar{\alpha},\bar{p},\bar{q}}.
  \end{align*}

  \emph{Step~2:} We prove $\|f\|_{\overline{\alpha},\overline{p},\overline{q}}\lesssim\|f\|_{\overline{\alpha},\overline{p},\overline{q}}^{(1)}$, which shows together with Step~1 the equivalence of $\|f\|_{\overline{\alpha},\overline{p},\overline{q}}$ and $\|f\|_{\overline{\alpha},\overline{p},\overline{q}}^{(1)}.$

  Let $(g_{j_{1},j_{2}})_{j_{1},j_{2}\ge1}\subseteq L^{\overline{p}}(\R \times \T)$ be a sequence of functions fulfilling $\supp\F g_{j_{1},j_{2}}\subseteq\{\xi\in\R^{2}:|\xi_{1}|\le2^{j_{1}}\; \textrm{and} \; | \xi_{2}|\le2^{j_{2}}\}$ and $f=\lim_{j_{1}\wedge j_{2}\to\infty}g_{j_{1},j_{2}}$ in $\mathcal{S}^{\prime}(\R^{2})$ as constructed in Lemma~\ref{lem:aux3}.
  
  Setting $g_{j_{1},j_{2}}=0$ if $j_{1}=0$ or $j_{2}=0$ we find, for any $J_{1},J_{2}\ge 1$,
  \begin{align*}
    \sum_{j_{1}=1}^{J_{1}}\sum_{j_{2}=1}^{J_{2}}(g_{j_{1},j_{2}}-g_{j_{1}-1,j_{2}}-g_{j_{1},j_{2}-1}+g_{j_{1}-1,j_{2}-1})
    &= \sum_{j_{1}=1}^{J_{1}} (g_{j_{1},J_{2}}-g_{j_{1},0} - g_{j_{1}-1,J_{2}} +g_{j_{1}-1,0} ) \\
    &=\sum_{j_{1}=1}^{J_{1}} ( g_{j_{1},J_{2}} - g_{j_{1}-1,J_{2}})
    = g_{J_{1},J_{2}} .
  \end{align*}
  Consequently, we can write $f$ as a telescope sum
  \begin{align*}
    f &=\sum_{j_{1}=1}^{\infty}\sum_{j_{2}=1}^{\infty}(g_{j_{1},j_{2}}-g_{j_{1}-1,j_{2}}-g_{j_{1},j_{2}-1}+g_{j_{1}-1,j_{2}-1})
    =\sum_{j_{1}=1}^{\infty}\sum_{j_{2}=1}^{\infty} g_{j_{1},j_{2}}^{\square}
  \end{align*}
  with
  \begin{equation*}
    g_{j_{1},j_{2}}^{\square}:=g_{j_{1},j_{2}}-g_{j_{1}-1,j_{2}}-g_{j_{1},j_{2}-1}+g_{j_{1}-1,j_{2}-1} .
  \end{equation*}
  Next, let \(k_1,k_2 \ge -1\) be the indices from the Littlewood--Paley characterization. Then, the convergence of the telescope sum in \(\mathcal{S}^{\prime}(\R^{2})\) implies
  \begin{equation*}
    \|\Delta_{k_{1},k_{2}}f\|_{\bar{p}}  =\Big\|\sum_{j_{1}=1}^{\infty}\sum_{j_{2}=1}^{\infty} \Delta_{k_{1},k_{2}}\big(g_{j_{1},j_{2}}^{\square}\big)\Big\|_{\bar{p}}
    \le \sum_{j_{1}=1}^{\infty}\sum_{j_{2}=1}^{\infty} \big\|\Delta_{k_{1},k_{2}}\big(g_{j_{1},j_{2}}^{\square}\big)\big\|_{\bar{p}}.
  \end{equation*}
  As previously, \(g_{j_{1},j_{2}}^{\square} \in L^{\overline{p}}(\R \times \T) \) implies the periodicity in the second component of the Littlewood--Paley block \(\Delta_{k_{1},k_{2}}\big(g_{j_{1},j_{2}}^{\square}\big)\) and therefore the \(L^{\overline{p}}\)-norm is well-defined. 
  Now, the idea is to split the double sum into four terms
  \begin{align*}
    & \sum_{j_{1}=1}^{\infty}\sum_{j_{2}=1}^{\infty} \big\|\Delta_{k_{1},k_{2}} g_{j_{1},j_{2}}^{\square}\big\|  _{\bar{p}} \\
    &\quad=  \,  \big\|\Delta_{k_{1},k_{2}} g_{1,1}^{\square} \big\|_{\bar{p}} + \sum_{j_{1}=2}^{\infty}  \big\|\Delta_{k_{1},k_{2}}  g_{j_{1},1}^{\square} \big\|_{\bar{p}} + \sum_{j_{2}=2}^{\infty}   \big\|\Delta_{k_{1},k_{2}} g_{1,j_{2}}^{\square} \big\|_{\bar{p}} +  \sum_{j_{1}=2}^{\infty}\sum_{j_{2}=2}^{\infty}  \big\|\Delta_{k_{1},k_{2}}  g_{j_{1},j_{2}}^{\square} \big\|_{\bar{p}}  \\
    &\quad= \,  \big\|\Delta_{k_{1},k_{2}} g_{1,1} \big \|_{\bar{p}}  + \sum_{j_{1}=2}^{\infty} \big\|\Delta_{k_{1},k_{2}} (g_{j_1,1}-g_{j_1-1,1}) \big\|_{\bar{p}} \\
    &\qquad + \sum_{j_{2}=2}^{\infty} \big\|\Delta_{k_{1},k_{2}} (g_{1,j_{2}}-g_{1,j_2-1}) \big\|_{\bar{p}}  +  \sum_{j_{1}=2}^{\infty}\sum_{j_{2}=2}^{\infty} \big\|\Delta_{k_{1},k_{2}} g_{j_{1},j_{2}}^{\square} \big\|_{\bar{p}}
  \end{align*}
  and to analyse each one individually. More precisely, we want to estimate them in the \(\ell^{q_{1}}(\ell^{q_{2}})\) norm. By no surprise each term is going to correspond to one term in \(\|f\|_{\overline{\alpha},\overline{p},\overline{q}}^{(1)}\). Before we start with each term, let us make the following observation:

  For some \(\tilde{f}  \in L^{\overline{p}}(\R \times \T) \) we have
  \begin{align}\label{eq: triangle_young}
    \big\|\Delta_{k_{1},k_{2}}\big(\tilde{f}  \big)\big\|_{\bar{p}} & =\big\|\F^{-1}\big[(\rho_{k_{1}}\otimes\rho_{k_{2}}\big]\ast \tilde{f}  \big\|_{\bar{p}} \nonumber \\
    & \le \big\|\F^{-1}\big[(\rho_{k_{1}}\otimes\rho_{k_{2}})\big]\big\|_{L^1(\R^2)}\big\| \tilde{f}  \big\|_{\bar{p}} 
     =\|\F^{-1}[\rho]\|_{L^1(\R)}^{2}\big\| \tilde{f}  \big\|_{\bar{p}}  
     \lesssim \big\| \tilde{f}  \big\|_{\bar{p}},
  \end{align}
  where we used Young inequality for $L^{\overline{p}}(\R \times \T)$ (Lemma~\ref{lem: anisotropic_inequalities}) in the second step.

  Let us now start with the first term. By Lemma~\ref{lem:aux3} we have $\supp\F g_{1,1} \subseteq Q_{2} $, where \(Q_2\) is a cube with side length \(2\) centered at the origin. Therefore, by the support of the Littlewood--Paley decomposition we have
  \begin{equation*}
    \big\|\Delta_{k_{1},k_{2}} g_{1,1} \big \|_{\bar{p}}
    = \big\| \F^{-1}( (\rho_{k_1} \otimes \rho_{k_2}) \F g_{1,1} ) \big \|_{\bar{p}}
    = 0 \quad \quad \textrm{for} \; k_1 \ge 2 \; \mathrm{or} \; k_2 \ge 2.
  \end{equation*}
  Together with \eqref{eq: triangle_young} and an application of Young's inequality for $L^{\overline{p}}(\R \times \T)$ we find
  \begin{align}\label{eq: lipal_self_estimate}
    &\Big( \sum\limits_{k_1=-1}^\infty 2^{\alpha_1 q_1 k_1} \Big( \sum\limits_{k_2=-1}^\infty  2^{\alpha_2 q_2 k_2} \big\|\Delta_{k_{1},k_{2}} g_{1,1} \big \|_{\bar{p}}^{q_2} \Big)^{\frac{q_1}{q_2}} \Big)^{\frac{1}{q_1}}\nonumber \\
    &\quad= \Big( \sum\limits_{k_1=-1}^1 2^{\alpha_1 q_1 k_1} \Big( \sum\limits_{k_2=-1}^1 2^{\alpha_2 q_2 k_2 } \big\|\Delta_{k_{1},k_{2}} g_{1,1} \big\|_{\bar{p}}^{q_2} \Big)^{\frac{q_1}{q_2}} \Big)^{\frac{1}{q_1}}  \nonumber \\
    &\quad \lesssim \big\|g_{1,1} \big\|_{\bar{p}} 
    = \big\|(K_1 \otimes K_1) \ast f \big\|_{\bar{p}}  
    \le \Big( \int_{\R}| K_1(h)| \dd h \Big)^2  \big\|f \big\|_{\bar{p}} 
    \lesssim \big\| f \big\|_{\bar{p}} .
  \end{align}
  This completes the treatment of the first term.

  For the second term we note that by the support of the Littlewood--Paley decomposition and \(\supp\F g_{j_1,1} \subseteq \{  \xi\in\R^{2}: |\xi_1| \le 2^{j_1} \; \mathrm{and} \; |\xi_2| \le 2 \}\) we have
  \begin{equation*}
    \big\|\Delta_{k_{1},k_{2}} (g_{j_1,1}-g_{j-1,1})  \big \|_{\bar{p}} = \big\| \F^{-1}( (\rho_{k_1} \otimes \rho_{k_2}) \F ((g_{j_1,1}-g_{j-1,1})) ) \big \|_{\bar{p}} = 0
  \end{equation*}
  for $k_1 \ge j_1 + 2$ or $k_2 \ge 2$. Consequently, we have
  \begin{align*}
    &\Big( \sum\limits_{k_1=-1}^\infty 2^{\alpha_1 q_1 k_1} \Big( \sum\limits_{k_2=-1}^\infty  2^{\alpha_2 q_2 k_2 } \Big( \sum\limits_{j_1=2}^\infty \big\|\Delta_{k_{1},k_{2}}  (g_{j_1,1}-g_{j-1,1}) \big \|_{\bar{p}} \Big)^{q_2} \Big)^{\frac{q_1}{q_2}} \Big)^{\frac{1}{q_1}}  \\
    &\quad\le \,\Big( \sum\limits_{k_1=-1}^\infty 2^{\alpha_1 q_1 k_1} \Big( \sum\limits_{j_1=2}^\infty  \Big(  \sum\limits_{k_2=-1}^{1}  2^{\alpha_2 q_2 k_2} \1_{\{k_1 -1 \le j_1\}} \big\|\Delta_{k_{1},k_{2}}  (g_{j_1,1}-g_{j-1,1}) \big\|_{\bar{p}}^{q_2}  \Big)^{\frac{1}{q_2}} \Big)^{q_1} \Big)^{\frac{1}{q_1}} \\
    &\quad= \,   \Big( \sum\limits_{k_1=-1}^{\infty} 2^{\alpha_1 q_1 k_1}  \Big( \sum\limits_{j_1=\max(k_1-1,2)}^\infty  \Big(  \sum\limits_{k_2=-1}^{1 }  2^{\alpha_2 q_2 k_2 } \big\|\Delta_{k_{1},k_{2}}  (g_{j_1,1}-g_{j-1,1}) \big \|_{\bar{p}}^{q_2}  \Big)^{\frac{1}{q_2}} \Big)^{q_1} \Big)^{\frac{1}{q_1}} \\
    &\quad \lesssim  \,  \Big( \sum\limits_{k_1=-1}^{\infty} 2^{\alpha_1 q_1 k_1} \Big( \sum\limits_{j_1=\max(k_1-1,2)}^\infty   \Big( \sum\limits_{k_2=-1}^{1}   2^{\alpha_2 q_2 k_2 } \big\|  g_{j_1,1}-g_{j-1,1} \big \|_{\bar{p}}^{q_2}  \Big)^{\frac{1}{q_2}} \Big)^{q_1} \Big)^{\frac{1}{q_1}} \\
    &\quad \lesssim \,  \Big( \sum\limits_{k_1=-1}^{\infty} 2^{\alpha_1 q_1 k_1} \Big( \sum\limits_{j_1=\max(k_1-1,2)}^\infty    \sup\limits_{|r_1|\le 2^{-j_{1}}} \|\delta_{r_1}f\|_{\bar{p}}  \Big)^{q_1} \Big)^{\frac{1}{q_1}} \\
    &\quad \le  \,  \Big( \sum\limits_{k_1=-1}^{\infty} 2^{\alpha_1 q_1 k_1} \Big( \sum\limits_{j_1=k_1-1}^\infty    \sup\limits_{|r_1|\le 2^{-j_{1}}} \|\delta_{r_1}f\|_{\bar{p}}  \Big)^{q_1} \Big)^{\frac{1}{q_1}} \\
    &\quad\le  \,  \Big( \sum\limits_{k_1=-1}^{\infty} 2^{\alpha_1 q_1 k_1} \Big( \sum\limits_{j_1=1}^\infty \,  \sup\limits_{\substack{|r_1| \le   2^{-(j_{1}+k_1-2)}}} \|\delta_{r_1}f\|_{\bar{p}}  \Big)^{q_1} \Big)^{\frac{1}{q_1}} \\
    &\quad\lesssim \,  \sum\limits_{j_1=1}^\infty 2^{ - \alpha_1 j_1} \Big( \sum\limits_{k_1=-1}^{\infty} 2^{\alpha_1 q_1 (k_1-2+j_1)}   \sup\limits_{\substack{|r_1| \le   2^{-(j_{1}+k_1-2)}}} \|\delta_{r_1}f\|_{\bar{p}}^{q_1} \Big)^{\frac{1}{q_1}} \\
    &\quad \le \,  \sum\limits_{j_1=1}^\infty 2^{ - \alpha_1 j_1} \Big( \sum\limits_{k_1=-2}^{\infty} 2^{\alpha_1 q_1 k_1}  \sup\limits_{|r_1| \le   2^{-k_{1}}} \|\delta_{r_1}f\|_{\bar{p}}^{q_1} \Big)^{\frac{1}{q_1}} \\
    &\quad \lesssim \,  \Big( \sum\limits_{k_1=-2}^{\infty} 2^{\alpha_1 q_1 k_1}   \sup\limits_{|r_1| \le   2^{-k_{1}}} \|\delta_{r_1}f\|_{\bar{p}}^{q_1} \Big)^{\frac{1}{q_1}} ,
  \end{align*}
  where we used \eqref{eq: triangle_young} in the third step and \eqref{eq: delta_1_l_p_estimate} in the fourth step. We also utilized multiple times the Fourier support of the functions as well as multiple index shifts.

  Now, a Riemann sum argument reveals that
  \begin{align}\label{eq: lipal_delta_1_estimate}
    & \Big( \sum\limits_{k_1=-1}^\infty 2^{\alpha_1 q_1 k_1} \Big( \sum\limits_{k_2=-1}^\infty  2^{\alpha_2 q_2 k_2 }  \sum\limits_{j_1=2}^\infty \big\|\Delta_{k_{1},k_{2}}  (g_{j_1,1}-g_{j-1,1}) \big \|_{\bar{p}}^{q_2}  \Big)^{\frac{q_2}{q_1}} \Big)^{\frac{1}{q_1}} \nonumber  \\
    &\quad\lesssim  \,
    \Big( \int_{\R} h_{1}^{-\alpha_{1}q_{1}}  \sup\limits_{|r_1| \le |h_1|} \|\delta_{r_{1}}^1 f\|_{{\overline{p}}}^{q_{1}}\frac{\dd h_{1}}{|h_{1}|}  \Big)^{\frac{1}{q_1}} .
  \end{align}
  Going through the same steps with the third term we obtain
  \begin{align}\label{eq: lipal_delta_2_estimate}
    & \Big( \sum\limits_{k_1=-1}^\infty 2^{\alpha_1 q_1 k_1} \Big( \sum\limits_{k_2=-1}^\infty  2^{\alpha_2 q_2 k_2 }  \sum\limits_{j_2=2}^\infty \big\|\Delta_{k_{1},k_{2}}  (g_{1,j_2}-g_{1,j_2-1}) \big \|_{\bar{p}}^{q_2}  \Big)^{\frac{q_2}{q_1}} \Big)^{\frac{1}{q_1}} \nonumber \\
     &\quad \lesssim  \,
    \Big( \int_{\T} h_{2}^{-\alpha_{2}q_{2}}  \sup\limits_{|r_2| \le |h_2|} \|\delta_{r_{2}}^1 f\|_{{\overline{p}}}^{q_{2}}\frac{\dd h_{2}}{|h_{2}|}  \Big)^{\frac{1}{q_2}} .
  \end{align}
  Hence, it remains to estimate the last term in our telescope sum decomposition. Again, we observe that by the support of the Littlewood--Paley decomposition and \(\supp \F g_{j_1,j_2} \subseteq  \{  \xi\in\R^{2}: |\xi_1| \le 2^{j_1} \; \mathrm{and} \;  |\xi_2| \le 2^{j_2} \}\) we have
  \begin{equation*}
    \big\|\Delta_{k_{1},k_{2}} g_{j_1,j_2}^\square \big \|_{\bar{p}} = \big\| \F^{-1}( (\rho_{k_1} \otimes \rho_{k_2}) \F g_{j_1,j_2}^\square ) \big \|_{\bar{p}} = 0 \quad \quad \textrm{for} \; k_1 \ge j_1+2 \; \mathrm{or} \; k_2 \ge j_2+2.
  \end{equation*}
  Therefore, we obtain 
  \begin{align*}
    &  \Big( \sum\limits_{k_1=-1}^\infty 2^{\alpha_1 q_1 k_1} \Big( \sum\limits_{k_2=-1}^\infty  2^{\alpha_2 q_2 k_2 } \Big( \sum\limits_{j_1=2}^\infty \sum\limits_{j_2=2}^\infty \big\|\Delta_{k_{1},k_{2}}
    g_{j_1,j_2}^\square \big \|_{\bar{p}} \Big)^{q_2}  \Big)^{\frac{q_1}{q_2}} \Big)^{\frac{1}{q_1}}  \\
    &\quad= \, \Big( \sum\limits_{k_1=-1}^\infty 2^{\alpha_1 q_1 k_1} \Big( \sum\limits_{k_2=-1}^\infty  2^{\alpha_2 q_2 k_2 } \Big( \sum\limits_{j_1=2}^\infty \sum\limits_{j_2=2}^\infty \1_{\{ k_1-1 \le j_1 \}} \1_{\{ k_2 -1 \le j_2 \}} \big\|\Delta_{k_{1},k_{2}}
    g_{j_1,j_2}^\square \big \|_{\bar{p}} \Big)^{q_2}  \Big)^{\frac{q_1}{q_2}} \Big)^{\frac{1}{q_1}}  \\
    &\quad\lesssim \,  \Big( \sum\limits_{k_1=-1}^\infty 2^{\alpha_1 q_1 k_1} \Big( \sum\limits_{k_2=-1}^\infty  2^{\alpha_2 q_2 k_2 } \Big( \sum\limits_{j_1=2}^\infty \sum\limits_{j_2=2}^\infty \1_{\{ k_1-1 \le j_1 \}} \1_{\{ k_2 -1 \le j_2 \}} \big\|
    g_{j_1,j_2}^\square \big \|_{\bar{p}} \Big)^{q_2}  \Big)^{\frac{q_1}{q_2}} \Big)^{\frac{1}{q_1}}  \\
    &\quad\lesssim  \,  \Big( \sum\limits_{k_1=-1}^\infty 2^{\alpha_1 q_1 k_1} \Big( \sum\limits_{k_2=-1}^\infty  2^{\alpha_2 q_2 k_2 } \Big( \sum\limits_{j_1=2}^\infty \sum\limits_{j_2=2}^\infty \1_{\{ k_1-1 \le j_1 \}} \1_{\{ k_2 -1 \le j_2 \}}  \sup\limits_{|r_i|\le 2^{-j_{i}}} \|\square_{r}f\|_{\bar{p}}  \Big)^{q_2}  \Big)^{\frac{q_1}{q_2}} \Big)^{\frac{1}{q_1}}  \\
    %   &\quad\le \,  \Big( \sum\limits_{k_1=-1}^\infty 2^{\alpha_1 q_1 k_1} \Big( \sum\limits_{j_1=2}^\infty  \1_{\{ k_1-1 \le j_1 \}}  \Big( \sum\limits_{k_2=-1}^\infty  2^{\alpha_2 q_2 k_2 } \\
    %   &\qquad \times\Big(  \sum\limits_{j_2=2}^\infty \1_{\{ k_2 -1 \le j_2 \}}  \sup\limits_{|r_i|\le 2^{-j_{i}}} \|\square_{r}f\|_{\bar{p}}  \Big)^{q_2}  \Big)^{\frac{1}{q_2}} \Big)^{q_1} \Big)^{\frac{1}{q_1}}  \\
    &\quad \le \,  \Big( \sum\limits_{k_1=-1}^\infty 2^{\alpha_1 q_1 k_1} \Big( \sum\limits_{j_1=k_1-1}^\infty   \Big( \sum\limits_{k_2=-1}^\infty  2^{\alpha_2 q_2 k_2 } \Big(  \sum\limits_{j_2=k_2-1}^\infty   \sup\limits_{|r_i|\le 2^{-j_{i}}} \|\square_{r}f\|_{\bar{p}}  \Big)^{q_2}  \Big)^{\frac{1}{q_2}} \Big)^{q_1} \Big)^{\frac{1}{q_1}}  \\
    &\quad\le  \,  \Big( \sum\limits_{k_1=-1}^\infty 2^{\alpha_1 q_1 k_1} \Big( \sum\limits_{j_1=1}^\infty   \Big( \sum\limits_{k_2=-1}^\infty  2^{\alpha_2 q_2 k_2 } \Big(  \sum\limits_{j_2=1}^\infty   \sup\limits_{|r_i|\le 2^{-(j_{i}+k_{i}-2)}} \|\square_{r}f\|_{\bar{p}}  \Big)^{q_2}  \Big)^{\frac{1}{q_2}} \Big)^{q_1} \Big)^{\frac{1}{q_1}}  \\
    &\quad\le \,  \sum\limits_{j_1=1}^\infty  \sum\limits_{j_2=1}^\infty  \Big( \sum\limits_{k_1=-1}^\infty 2^{\alpha_1 q_1 k_1}  \Big( \sum\limits_{k_2=-1}^\infty  2^{\alpha_2 q_2 k_2 } \Big(     \sup\limits_{|r_i|\le 2^{-(j_{i}+k_{i}-2)}} \|\square_{r}f\|_{\bar{p}}  \Big)^{q_2}  \Big)^{\frac{q_1}{q_2}}  \Big)^{\frac{1}{q_1}}  \\
    &\quad\lesssim  \,  \sum\limits_{j_1=1}^\infty  \sum\limits_{j_2=1}^\infty  2^{-\alpha_1  j_1} 2^{-\alpha_2j_2 } \Big( \sum\limits_{k_1=-1}^\infty 2^{\alpha_1 q_1 (j_1+k_1-2)}  \Big( \sum\limits_{k_2=-1}^\infty  2^{\alpha_2 q_2 (j_2+k_2-2)} \\
    &\qquad \times \Big(  \sup\limits_{|r_i|\le 2^{-(j_{i}+k_{i}-2)}} \|\square_{r}f\|_{\bar{p}}  \Big)^{q_2}  \Big)^{\frac{q_1}{q_2}}  \Big)^{\frac{1}{q_1}}  \\
    &\quad \le \,  \sum\limits_{j_1=1}^\infty  \sum\limits_{j_2=1}^\infty  2^{-\alpha_1  j_1} 2^{-\alpha_2j_2 } \Big( \sum\limits_{k_1=-2}^\infty 2^{\alpha_1 q_1 k_1}  \Big( \sum\limits_{k_2=-2}^\infty  2^{\alpha_2 q_2 k_2} \Big(  \sup\limits_{|r_i|\le 2^{-k_{i}}} \|\square_{r}f\|_{\bar{p}}  \Big)^{q_2}  \Big)^{\frac{q_1}{q_2}}  \Big)^{\frac{1}{q_1}}  \\
    &\quad\lesssim  \,  \Big( \sum\limits_{k_1=-2}^\infty 2^{\alpha_1 q_1 k_1}  \Big( \sum\limits_{k_2=-2}^\infty  2^{\alpha_2 q_2 k_2} \Big(  \sup\limits_{|r_i|\le 2^{-k_{i}}} \|\square_{r}f\|_{\bar{p}}  \Big)^{q_2}  \Big)^{\frac{q_1}{q_2}}  \Big)^{\frac{1}{q_1}} ,
  \end{align*}
  where we used \eqref{eq: triangle_young} in the second step and \eqref{eq: square_l_p_estimate} in the third step. After that, we used multiple index shifts as well as the triangle inequality for the \(\ell^{q_1},\ell^{q_2}\)-norms in the seventh step. Employing a Riemann sum arguments yields
  \begin{align}\label{eq: lipal_square_estimate}
    & \Big( \sum\limits_{k_1=-1}^\infty 2^{\alpha_1 q_1 k_1} \Big( \sum\limits_{k_2=-1}^\infty  2^{\alpha_2 q_2 k_2 } \Big( \sum\limits_{j_1=2}^\infty \sum\limits_{j_2=2}^\infty \big\|\Delta_{k_{1},k_{2}}
    g_{j_1,j_2}^\square \big \|_{\bar{p}} \Big)^{q_2}  \Big)^{\frac{q_1}{q_2}} \Big)^{\frac{1}{q_1}}   \nonumber \\
    &\quad \lesssim  \Big(\int_{\R}|h_{1}|^{-\alpha_{1}q_{1}}\Big(\int_{\T}|h_{2}|^{-\alpha_{2}q_{2}}\sup_{|r_{1}|\le|h_{1}|,|r_{2}|\le h_{2}}\|\square_{r_{1},r_{2}}f\|_{\overline{p}}^{q_{2}}\frac{\dd h_{2}}{|h_{2}|}\Big)^{q_{1}/q_{2}}\frac{\dd h_{1}}{|h_{1}|}\Big)^{1/q_{1}}.
  \end{align}

  Putting \eqref{eq: lipal_self_estimate}, \eqref{eq: lipal_delta_1_estimate}, \eqref{eq: lipal_delta_2_estimate} and \eqref{eq: lipal_square_estimate} together we finally obtain
  \begin{align*}
    &\Big\|\Big(2^{\alpha_{1}k_1}\big\|\big(2^{\alpha_{2}k_2}\|\Delta_{k_1,k_2}f\|_{\bar p}\big)_{k_1\ge-1}\big\|_{\ell^{q_{2}}}\Big)_{k_2\ge-1}\Big\|_{\ell^{q_{1}}} \\
    &\quad= \, \Big\|\Big(2^{\alpha_{1}k_1}\big\|\big(2^{\alpha_{2}k_2}\|\Delta_{k_1,k_2} \Big( \sum_{j_{1}=1}^{\infty}\sum_{j_{2}=1}^{\infty} g_{j_{1},j_{2}}^{\square} \Big) \|_{\bar p}\big)_{k_1\ge-1}\big\|_{\ell^{q_{2}}}\Big)_{k_2\ge-1}\Big\|_{\ell^{q_{1}}} \\
    &\quad\le \, \Big\|\Big(2^{\alpha_{1}k_1}\big\|\big(2^{\alpha_{2}k_2}   \big\|\Delta_{k_{1},k_{2}} g_{1,1} \big \|_{\bar{p}} \big)_{k_1\ge-1}\big\|_{\ell^{q_{2}}}\Big)_{k_2\ge-1}\Big\|_{\ell^{q_{1}}}   \\
    &\qquad+ \Big\|\Big(2^{\alpha_{1}k_1}\big\|\big(2^{\alpha_{2}k_2} \sum_{j_{1}=2}^{\infty} \big\|\Delta_{k_{1},k_{2}} (g_{j_1,1}-g_{j_1-1,1}) \big\|_{\bar{p}} \big)_{k_1\ge-1}\big\|_{\ell^{q_{2}}}\Big)_{k_2\ge-1}\Big\|_{\ell^{q_{1}}}  \\
    &\qquad+ \Big\|\Big(2^{\alpha_{1}k_1}\big\|\big(2^{\alpha_{2}k_2} \sum_{j_{2}=2}^{\infty} \big\|\Delta_{k_{1},k_{2}} (g_{1,j_{2}}-g_{1,j_2-1}) \big\|_{\bar{p}} \big)_{k_1\ge-1}\big\|_{\ell^{q_{2}}}\Big)_{k_2\ge-1}\Big\|_{\ell^{q_{1}}}  \\
    & \qquad+ \Big\|\Big(2^{\alpha_{1}k_1}\big\|\big(2^{\alpha_{2}k_2} \sum_{j_{1}=2}^{\infty}\sum_{j_{2}=2}^{\infty} \big\|\Delta_{k_{1},k_{2}} g_{j_{1},j_{2}}^{\square} \big\|_{\bar{p}} \big)_{k_1\ge-1}\big\|_{\ell^{q_{2}}}\Big)_{k_2\ge-1}\Big\|_{\ell^{q_{1}}} \\
    &\quad \lesssim \,  \|f\|_{\overline{\alpha},\overline{p},\overline{q}}^{(1)}.
  \end{align*}

  \emph{Step~3}: We show the equivalence of $\|f\|_{\overline{\alpha},\overline{p},\overline{q}}^{(1)}$ and $\|f\|_{\overline{\alpha},\overline{p},\overline{q}}^{(2)}.$ Since $\|f\|_{\overline{\alpha},\overline{p},\overline{q}}^{(2)}\lesssim\|f\|_{\overline{\alpha},\overline{p},\overline{q}}^{(1)}$ is obvious, it remains to verify $\|f\|_{\overline{\alpha},\overline{p},\overline{q}}^{(1)}\lesssim\|f\|_{\overline{\alpha},\overline{p},\overline{q}}^{(2)}$. We start with by estimating the square integrands in the norms \(\|f\|_{\overline{\alpha},\overline{p},\overline{q}}^{(1)}\) and \(\|f\|_{\overline{\alpha},\overline{p},\overline{q}}^{(2)}\). Moreover, we notice that the only difficulty happens for \(h_{i}\) near zero. Therefore, we are only interested in a small rectangle near zero. Let $r_{1},r_{2}\in\R$ satisfy $|h_{i}|\leq|r_{i}|\leq2|h_{i}|$ for $i=1,2$. We write the rectangular increments as
  \begin{equation*}
    \square_{(r_{1},r_{2})}f(x_{1},x_{2})
    =\square_{(r_{1},\frac{r_{2}}{|r_{2}|}|h_{2}|)}f(x_{1},x_{2})+\square_{(r_{1},r_{2}-\frac{r_{2}}{|r_{2}|}|h_{2}|)}f\Big(x_{1},x_{2}+\frac{r_{2}}{|r_{2}|}|h_{2}|\Big).
  \end{equation*}
  Owing to $|r_{2}-\frac{|h_{2}|}{|r_{2}|}r_{2}|=|r_{2}|(1-\frac{|h_{2}|}{|r_{2}|})=|r_{2}|-|h_{2}|\le|h_{2}|$, we obtain
  \begin{equation*}
    \|\square_{(r_{1},r_{2})}f\|_{\overline{p}}\leq\sup_{\tau:|\tau|=1}\|\square_{(r_{1},\tau h_{2})}f\|_{\overline{p}}+\sup_{|s_{2}|\leq|h_{2}|}\|\square_{(r_{1},s_{2})}f\|_{\overline{p}}.
  \end{equation*}
  The above ineqaulity is obviously true for \(r_2 \le |h_2|\), too. Therefore, 
  \begin{equation*}
    \sup_{|r_{1}|\leq|h_{1}|,|r_{2}|\leq2|h_{2}|}\|\square_{(r_{1},r_{2})}f\|_{\overline{p}}\leq\sup_{|r_{1}|\leq|h_{1}|,|\tau|=1}\|\square_{(r_{1},\tau h_{2})}f\|_{\overline{p}}+\sup_{|r_{1}|\leq|h_{1}|,|r_{2}|\leq|h_{2}|}\|\square_{(r_{1},r_{2})}f\|_{\overline{p}}.
  \end{equation*}
  Starting with $q_{1},q_{2}<\infty$, integrating both sides leads to
  \begin{align*}
    & \Big(\int_{\R}|h_{1}|^{-\alpha_{1}q_{1}}\Big(\int_{\T}|h_{2}|^{-\alpha_{2}q_{2}}\sup_{|r_{1}|\leq|h_{1}|,|r_{2}|\leq2|h_{2}|}\|\square_{(r_{1},r_{2})}f\|_{\overline{p}}^{q_{2}}\frac{\dd h_{2}}{|h_{2}|}\Big)^{q_{1}/q_{2}}\frac{\dd h_{1}}{|h_{1}|}\Big)^{1/q_{1}}\\
    & \quad\leq\Big(\int_{\R}|h_{1}|^{-\alpha_{1}q_{1}}\Big(\int_{\T}|h_{2}|^{-\alpha_{2}q_{2}}\sup_{|r_{1}|\leq|h_{1}|,|\tau|=1}\|\square_{(r_{1},\tau h_{2})}f\|_{\overline{p}}\frac{\dd h_{2}}{|h_{2}|}\Big)^{q_{1}/q_{2}}\frac{\dd h_{1}}{|h_{1}|}\Big)^{1/q_{1}}\\
    & \qquad+\Big(\int_{\R}|h_{1}|^{-\alpha_{1}q_{1}}\Big(\int_{\T}|h_{2}|^{-\alpha_{2}q_{2}}\sup_{|r_{1}|\leq|h_{1}|,|r_{2}|\leq|h_{2}|}\|\square_{(r_{1},r_{2})}f\|_{\overline{p}}\frac{\dd h_{2}}{|h_{2}|}\Big)^{q_{1}/q_{2}}\frac{\dd h_{1}}{|h_{1}|}\Big)^{1/q_{1}}.
  \end{align*}
  Hence a change of variable $2h_{2}\to h_{2}$ reveals
  \begin{align*}
    & 2^{\alpha_{2}}\Big(\int_{\R}|h_{1}|^{-\alpha_{1}q_{1}}\Big(\int_{\T}|h_{2}|^{-\alpha_{2}q_{2}}\sup_{|r_{1}|\leq|h_{1}|,|r_{2}|\leq h_{2}}\|\square_{(r_{1},r_{2})}f\|_{\overline{p}}^{q_{2}}\frac{\dd h_{2}}{|h_{2}|}\Big)^{q_{1}/q_{2}}\frac{\dd h_{1}}{|h_{1}|}\Big)^{1/q_{1}}\\
    & \quad\leq\Big(\int_{\R}|h_{1}|^{-\alpha_{1}q_{1}}\Big(\int_{\T}|h_{2}|^{-\alpha_{2}q_{2}}\sup_{|r_{1}|\leq|h_{1}|,|\tau|=1}\|\square_{(r_{1},\tau h_{2})}f\|_{\overline{p}}\frac{\dd h_{2}}{|h_{2}|}\Big)^{q_{1}/q_{2}}\frac{\dd h_{1}}{|h_{1}|}\Big)^{1/q_{1}}\\
    & \qquad+\Big(\int_{\R}|h_{1}|^{-\alpha_{1}q_{1}}\Big(\int_{\T}|h_{2}|^{-\alpha_{2}q_{2}}\sup_{|r_{1}|\leq|h_{1}|,|r_{2}|\leq|h_{2}|}\|\square_{(r_{1},r_{2})}f\|_{\overline{p}}\frac{\dd h_{2}}{|h_{2}|}\Big)^{q_{1}/q_{2}}\frac{\dd h_{1}}{|h_{1}|}\Big)^{1/q_{1}}.
  \end{align*}
  Therefore,
  \begin{align}\label{eq:estimate second component}
    & \Big(\int_{\R}|h_{1}|^{-\alpha_{1}q_{1}}\Big(\int_{\T}|h_{2}|^{-\alpha_{2}q_{2}}\sup_{|r_{1}|\leq|h_{1}|,|r_{2}|\leq|h_{2}|}\|\square_{(r_{1},r_{2})}f\|_{\overline{p}}^{q_{2}}\frac{\dd h_{2}}{|h_{2}|}\Big)^{q_{1}/q_{2}}\frac{\dd h_{1}}{|h_{1}|}\Big)^{1/q_{1}}\nonumber \\
    & \quad\leq\frac{1}{2^{\alpha_{2}}-1}\Big(\int_{\R}|h_{1}|^{-\alpha_{1}q_{1}}\Big(\int_{\T}|h_{2}|^{-\alpha_{2}q_{2}}\sup_{|r_{1}|\leq|h_{1}|,|\tau|=1}\|\square_{(r_{1},\tau h_{2})}f\|_{\overline{p}}\frac{\dd h_{2}}{|h_{2}|}\Big)^{q_{1}/q_{2}}\frac{\dd h_{1}}{|h_{1}|}\Big)^{1/q_{1}}\nonumber \\
    & \quad\le\frac{2}{2^{\alpha_{2}}-1}\Big(\int_{\R}|h_{1}|^{-\alpha_{1}q_{1}}\Big(\int_{\T}|h_{2}|^{-\alpha_{2}q_{2}}\sup_{|r_{1}|\leq|h_{1}|}\|\square_{(r_{1},h_{2})}f\|_{\overline{p}}\frac{\dd h_{2}}{|h_{2}|}\Big)^{q_{1}/q_{2}}\frac{\dd h_{1}}{|h_{1}|}\Big)^{1/q_{1}},
  \end{align}
  where the last estimated follows from $\sup_{|\tau|=1}\|\square_{(r_{1},\tau h_{2})}f\|_{\overline{p}}\le\|\square_{(r_{1},h_{2})}f\|_{\overline{p}}+\|\square_{(r_{1},-h_{2})}f\|_{\overline{p}}$ and substituting $-h_{2}\to h_{2}$ for the second term. Proceeding similarly in the first coordinate, we deduce from
  \begin{equation*}
    \sup_{|r_{1}|\leq2|h_{1}|}\|\square_{(r_{1},h_{2})}f\|_{\overline{p}}\le\sup_{|\tau|=1}\|\square_{(\tau h_{1},h_{2})}f\|_{\overline{p}}+\sup_{|r_{1}|\leq|h_{1}|}\|\square_{(r_{1},h_{2})}\|_{\overline{p}}
  \end{equation*}
  that
  \begin{align}\label{eq:estimate first component}
    & \Big(\int_{\R}|h_{1}|^{-\alpha_{1}q_{1}}\Big(\int_{\T}|h_{2}|^{-\alpha_{2}q_{2}}\sup_{|r_{1}|\leq h_{1}}\|\square_{(r_{1},h_{2})}f\|_{\overline{p}}\frac{\dd h_{2}}{|h_{2}|}\Big)^{q_{1}/q_{2}}\frac{\dd h_{1}}{|h_{1}|}\Big)^{1/q_{1}}\nonumber \\
    & \quad\leq\frac{2}{2^{\alpha_{1}}-1}\Big(\int_{\R}|h_{1}|^{-\alpha_{1}q_{1}}\Big(\int_{\T}|h_{2}|^{-\alpha_{2}q_{2}}\|\square_{(h_{1},h_{2})}f\|_{\overline{p}}\frac{\dd h_{2}}{|h_{2}|}\Big)^{q_{1}/q_{2}}\frac{\dd h_{1}}{|h_{1}|}\Big)^{1/q_{1}}.
  \end{align}
  Combing \eqref{eq:estimate second component} and \eqref{eq:estimate first component}, we find
  \begin{align*}
    & \Big(\int_{\R}|h_{1}|^{-\alpha_{1}q_{1}}\Big(\int_{\T}|h_{2}|^{-\alpha_{2}q_{2}}\sup_{|r_{1}|\leq h_{1},|r_{2}|\leq h_{2}}\|\square_{(r_{1},r_{2})}f\|_{\overline{p}}^{q_{2}}\frac{\dd h_{2}}{|h_{2}|}\Big)^{q_{1}/q_{2}}\frac{\dd h_{1}}{|h_{1}|}\Big)^{1/q_{1}}\\
    & \quad\leq\frac{2}{2^{\alpha_{1}}-1}\frac{2}{2^{\alpha_{2}}-1}\Big(\int_{\R}|h_{1}|^{-\alpha_{1}q_{1}}\Big(\int_{\T}|h_{2}|^{-\alpha_{2}q_{2}}\|\square_{(h_{1},h_{2})}f\|_{\overline{p}}\frac{\dd h_{2}}{|h_{2}|}\Big)^{q_{1}/q_{2}}\frac{\dd h_{1}}{|h_{1}|}\Big)^{1/q_{1}}\\
    & \quad=\frac{2}{2^{\alpha_{1}}-1}\frac{2}{2^{\alpha_{2}}-1}\Big\|\frac{\|\square_{(h_{1},h_{2})}f\|_{\overline{p}}}{|h_{1}|^{\alpha_{1}+1/q_{1}}|h_{2}|^{\alpha_{2}+1/q_{2}}}\Big\|_{(q_{1},q_{2})}.
  \end{align*}
  Furthermore, we obtain the corresponding result for $q_{1}=\infty$ or $q_{2}=\infty$ by canonical modifications of the above formulas.

  Moreover, the directional integrands \(\delta_{h_1}f, \; \delta_{h_2}f\) in the norms \(\|f\|_{\overline{\alpha},\overline{p},\overline{q}}^{(1)}, \; \|f\|_{\overline{\alpha},\overline{p},\overline{q}}^{(2)}\) behave as in the isotropic case. Therefore, one can repeat \cite[Theorem 2.5.12, Step 3]{Triebel1978} in a similar fashion as above to obtain the corresponding estimates. This implies the equivalence of \(\|f\|_{\overline{\alpha},\overline{p},\overline{q}}^{(1)}, \; \|f\|_{\overline{\alpha},\overline{p},\overline{q}}^{(2)}\) and the theorem is proven.
\end{proof}

\subsection{Besov spaces with dominating mixed smoothness on compact domains}

Functions spaces on domains play a crucial role, e.g., in the study of partial differential equations and of stochastic processes, cf. Section~\ref{sec:regularity of random fields}. In view of the difference characterizations of Besov spaces with dominating mixed smoothness, presented in Theorem~\ref{thm:characteristation}, there is a natural localized version of the Besov norm with dominating mixed smoothness as well. We define for the domain $D:=[0,T]\times\mathbb T$ the local Besov norms with dominating mixed smoothness on $D$ by
\begin{align*}
  \|f\|_{D,\overline{\alpha},\overline{p},\overline{q}}^{(2)}
  &:=\|f\|_{D,\overline{p}}+\Big(\int_{0}^{1}h_{1}^{-\alpha_{1}q_{1}}\Big(\int_{0}^{1}h_{2}^{-\alpha_{2}q_{2}}\big(\|\square_{(h_{1},h_{2})}f\|_{D-h_1,\overline{p}}\big)^{q_{2}}\frac{\dd h_{2}}{|h_{2}|}\Big)^{q_{1}/q_{2}}\frac{\dd h_{1}}{|h_{1}|}\Big)^{\frac{1}{q_{1}}}\\
  &\quad+\Big(\int_{0}^{1}h_{2}^{-\alpha_{2}q_{2}}\big(\|\delta_{h_{2}} f\|_{D,\overline{p}}\big)^{q_{2}}\frac{\dd h_{2}}{|h_{2}|}\Big)^{\frac{1}{q_{2}}}
  +\Big( \int_{0}^{1}h_{1}^{-\alpha_{1}q_{1}}\big(\|\delta_{h_{1}} f\|_{D-h_1,\overline{p}}\big)^{q_{1}}\frac{\dd h_{1}}{|h_{1}|}\Big)^{\frac{1}{q_{1}}},
\end{align*}
where
\begin{equation*}
  \|f\|_{D,\bar{p}} :=\Big(\int_{0}^{T}\Big(\int_{\mathbb T}|f(x_{1},x_{2})|^{p_{2}}\dd x_{2}\Big)^{p_{1}/p_{2}}\dd x_{1}\Big)^{\frac{1}{p_{1}}}
\end{equation*}
and $D-h_1:=[0,\min\{(T-h_1),0\}]\times\mathbb T\}]$. Following e.g. \cite[Section~4.3.2]{Triebel1978}, we introduce
\begin{equation*}
  B_{\overline{p},\overline{q}}^{\overline{\alpha}}(D):= \{ f\in B_{\overline{p},\overline{q}}^{\overline{\alpha}}(\R\times\mathbb T) \,:\, \supp f \subseteq D \}
\end{equation*}
and
\begin{equation*}
  \mathcal{S}_{\pi_2}^{\prime}(D) := \{ f\in \mathcal{S}_{\pi_2}^{\prime}(\R^2)
  \,:\, \supp f \subseteq D \}.
\end{equation*}
The following result is a direct consequence of Theorem~\ref{thm:characteristation} and verifies that $\|f\|_{D,\overline{\alpha},\overline{p},\overline{q}}^{(2)}$ is indeed a localized version of $\|f\|_{\overline{\alpha},\overline{p},\overline{q}}$:

\begin{corollary}
  Let $ \bar p,\bar q\in[1,\infty]^2$ and $\bar \alpha\in(0,1)^2$. Then, $\|\cdot\|_{D,\overline{\alpha},\overline{p},\overline{q}}^{(2)}$ and $\|\cdot\|_{\overline{\alpha},\overline{p},\overline{q}}$ are equivalent norms in $B_{\overline{p},\overline{q}}^{\overline{\alpha}}(D)$, that is,
  \begin{equation*}
    \|f\|_{\overline{\alpha},\overline{p},\overline{q}}
    \lesssim\|f\|_{D,\overline{\alpha},\overline{p},\overline{q}}^{(2)}
    \lesssim\|f\|_{\overline{\alpha},\overline{p},\overline{q}},\quad\text{for }f\in\mathcal{S}^{\prime}(D) \cap L^{\overline{p}}(\R^2).
  \end{equation*}
\end{corollary}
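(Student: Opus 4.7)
The plan is to derive both inequalities from Theorem~\ref{thm:characteristation} by a term-by-term comparison between $\|\cdot\|^{(2)}_{\bar\alpha,\bar p,\bar q}$ and $\|\cdot\|^{(2)}_{D,\bar\alpha,\bar p,\bar q}$, treating the two directions separately.

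For the upper bound $\|f\|^{(2)}_{D,\bar\alpha,\bar p,\bar q}\lesssim\|f\|_{\bar\alpha,\bar p,\bar q}$ I would use a pure monotonicity argument. The support condition $\supp f\subseteq D$ gives $\|f\|_{D,\bar p}=\|f\|_{\bar p}$, and each increment term in the local norm uses both a smaller $h$-integration domain (namely $(0,1)$ rather than $\R$ or $\T$) and a smaller spatial domain in the $L^{\bar p}$-norm than the corresponding term in $\|f\|^{(2)}_{\bar\alpha,\bar p,\bar q}$. Hence every integrand in the local norm is dominated pointwise by its global counterpart, so $\|f\|^{(2)}_{D,\bar\alpha,\bar p,\bar q}\leq\|f\|^{(2)}_{\bar\alpha,\bar p,\bar q}$, and Theorem~\ref{thm:characteristation} closes the argument.

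For the converse $\|f\|_{\bar\alpha,\bar p,\bar q}\lesssim\|f\|^{(2)}_{D,\bar\alpha,\bar p,\bar q}$, by Theorem~\ref{thm:characteristation} it suffices to prove $\|f\|^{(2)}_{\bar\alpha,\bar p,\bar q}\lesssim\|f\|^{(2)}_{D,\bar\alpha,\bar p,\bar q}$. I would split each $h$-integral into the tail $\{|h|\geq 1\}$ and the core $\{|h|<1\}$. On the tail, the crude bounds $\|\delta_{h_i}f\|_{\bar p}\leq 2\|f\|_{\bar p}$ and $\|\square_{(h_1,h_2)}f\|_{\bar p}\leq 4\|f\|_{\bar p}$, combined with the finite integral $\int_{|h|\geq 1}|h|^{-\alpha q-1}\dd h$, yield a contribution controlled by $\|f\|_{\bar p}=\|f\|_{D,\bar p}$. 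On the core I would exploit $\supp f\subseteq D$ to decompose
\begin{equation*}
  \|\delta_{h_1}f\|_{\bar p}^{p_1}=\int_0^{h_1}\!\|f(y,\cdot)\|_{L^{p_2}(\T)}^{p_1}\dd y+\|\delta_{h_1}f\|_{D-h_1,\bar p}^{p_1}+\int_{T-h_1}^{T}\!\|f(y,\cdot)\|_{L^{p_2}(\T)}^{p_1}\dd y,
\end{equation*}
with the analogous decomposition for $\square_{(h_1,h_2)}f$. The middle piece enters $\|f\|^{(2)}_{D,\bar\alpha,\bar p,\bar q}$ directly, while the boundary pieces are rewritten via the identity $f(y,\cdot)=\delta_{h_1}f(y-h_1,\cdot)$ valid for $y\in(0,h_1)$ because $f$ vanishes on $y<0$; the boundary near $y=T$ is handled symmetrically, and the same strategy applies to the $h_2$-increment and the rectangular increment.

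The hard part will be absorbing these boundary contributions into the local norm. Explicitly, one has to show that $\int_0^1 h_1^{-\alpha_1 q_1-1}\bigl(\int_0^{h_1}\|f(y,\cdot)\|_{L^{p_2}(\T)}^{p_1}\dd y\bigr)^{q_1/p_1}\dd h_1$ is controlled by $\|f\|^{(2)}_{D,\bar\alpha,\bar p,\bar q}$ despite the $h_1\to 0$ singularity of the weight. My approach here is a Hardy-type argument: after substituting $z=y-h_1$ and applying Fubini, together with a dyadic iteration along scales $h_1\sim 2^{-k}$, the boundary integrals reduce to weighted sums of $\|\delta_{2^{-k}}f\|_{D-2^{-k},\bar p}^{p_1}$ that are summable thanks to $\bar\alpha\in(0,1)^2$ and thus bounded by the local increment norm. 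The support condition $\supp f\subseteq D$ supplies exactly the quantitative boundary vanishing that compensates the singular weight and makes this absorption possible.
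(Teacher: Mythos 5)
Your first inequality is fine and is all the paper itself uses: for $f$ supported in $D$ one has $\|f\|_{D,\bar p}=\|f\|_{\bar p}$ and every increment term of $\|f\|^{(2)}_{D,\bar\alpha,\bar p,\bar q}$ is pointwise dominated by the corresponding term of $\|f\|^{(2)}_{\bar\alpha,\bar p,\bar q}$, so Theorem~\ref{thm:characteristation} closes that direction. The problem is the converse, exactly at the step you yourself flag as the hard part. The identity $f(y,\cdot)=\delta_{h_1}f(y-h_1,\cdot)$ anchors the increment at the point $y-h_1<0$, which lies outside $D-h_1=[0,T-h_1]\times\T$; such increments never occur in $\|f\|^{(2)}_{D,\bar\alpha,\bar p,\bar q}$, whose differences compare only values of $f$ at points inside $D$. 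If instead you compare $f(y,\cdot)$ with interior values $f(y+s,\cdot)$ and iterate dyadically inward, the iteration terminates with a remainder of size $\|f\|_{D,\bar p}$ multiplied against the non-integrable weight $h_1^{-\alpha_1q_1-1}$, so the scheme does not close. In short, the claimed absorption of the boundary terms is not justified, and it is precisely the whole content of the corollary.

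More decisively, no argument whose constants depend only on $\bar\alpha\in(0,1)^2$ (as your ``summable thanks to $\bar\alpha\in(0,1)^2$'' suggests) can prove the absorption step, because it is false in the borderline case. Take $T\ge1$, $p_1>1$, $\alpha_1=1/p_1$, $q_1<\infty$, let $g_\epsilon(x_1)=\min(x_1/\epsilon,1)$ on $[0,T/2]$, decreasing smoothly to $0$ on $[T/2,T]$ and vanishing outside $[0,T]$, and set $f_\epsilon:=g_\epsilon\otimes 1\in B^{\bar\alpha}_{\bar p,\bar q}(D)$. All $\delta_{h_2}$- and $\square$-increments vanish, and a direct computation gives $\|f_\epsilon\|^{(2)}_{D,\bar\alpha,\bar p,\bar q}\lesssim 1+\epsilon^{1/p_1-\alpha_1}=O(1)$, whereas $\int_0^{h_1}\|f_\epsilon(y,\cdot)\|_{L^{p_2}(\T)}^{p_1}\dd y\gtrsim h_1$ for $h_1\ge 2\epsilon$, so your boundary integral is $\gtrsim\bigl(\log(1/\epsilon)\bigr)^{1/q_1}\to\infty$; the same family shows $\|f_\epsilon\|_{\bar\alpha,\bar p,\bar q}\sim\|g_\epsilon\|_{B^{\alpha_1}_{p_1,q_1}(\R)}\to\infty$, so the inequality $\|f\|_{\bar\alpha,\bar p,\bar q}\lesssim\|f\|^{(2)}_{D,\bar\alpha,\bar p,\bar q}$ itself fails there. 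Hence a correct treatment of the hard direction must use the relation between $\alpha_i$ and $1/p_i$: for $\alpha_i<1/p_i$ a genuine Hardy-type boundary inequality (the classical ``extension by zero'' regime), for $\alpha_i>1/p_i$ the hypothesis that $f$ lies in the global space, which forces vanishing boundary values, and the case $\alpha_i=1/p_i$ must be excluded. As written, your sketch does not engage with this dichotomy, so the key step is a genuine gap rather than a routine Fubini/dyadic argument.
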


\begin{remark}
  Another way to define a local version of the Besov spaces with dominating mixed smoothness is to introduce the  norm
  \begin{equation*}
    \|f\|_{D,\overline{\alpha},\overline{p},\overline{q}}
    := \inf \{ \|g\|_{\overline{\alpha},\overline{p},\overline{q}}\,:\, g\in \mathcal{S}^\prime (\R^2) \text{ s.t. }
    g = f \text{ on }D \}.
  \end{equation*}
  Unfortunately, to the best of our knowledge in the case with dominating mixed smoothness not many results are known regarding the above norm. However, there are many results regarding these type of norm in the isotropic case, see for instance \cite{Triebel1978} and \cite{Rychkov1999}. The alternative characterization of the Besov spaces (Theorem~\ref{thm:characteristation}) could be a crucial tool to obtain the corresponding results in the case with dominating mixed smoothness, which appears to be an interesting question for future research.
\end{remark}

The global Besov norm with dominating mixed smoothness and its corresponding local version are additionally related by following multiplier theorem.

\begin{proposition}
  Let $f\in\mathcal{S}_{\pi_2}'(\R^{2})$ with $\|f\|_{D,\overline{\alpha},\overline{p},\overline{q}}^{(2)}<\infty$ for some $D:=[0,T]\times \mathbb T$, where $T\in(0,\infty)$, $\bar{\alpha}\in(0,1 )^{2},\bar{p},\bar{q}\in[1,\infty]^{2}$. Then for any $\phi\in\mathcal{S}(\R)$ satisfying $\supp\phi\subseteq[0,T]$ we have $(\phi\otimes 1) f\in B_{\bar{p},\bar{q}}^{\bar{\alpha}}(\R^{2})$ and
  \begin{equation*}
    \|(\phi\otimes 1) f\|_{\overline{\alpha},\overline{p},\overline{q}}
    \lesssim\|\phi\|_{C^1} \|f\|_{D,\overline{\alpha},\overline{p},\overline{q}}^{(2)}
  \end{equation*}
  for $\|\phi\|_{C^1}:=\|\phi\|_\infty+\|\phi'\|_\infty$.
\end{proposition}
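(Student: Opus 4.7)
The approach is to estimate $\|(\phi\otimes 1)f\|_{\overline{\alpha},\overline{p},\overline{q}}$ through the equivalent difference norm $\|(\phi\otimes 1)f\|_{\overline{\alpha},\overline{p},\overline{q}}^{(2)}$ provided by Theorem~\ref{thm:characteristation}, whose defining integrals may be restricted to $h_1,h_2\in[-1,1]$; by symmetry $h\to -h$ it suffices to consider $h_1,h_2\in(0,1]$. Each of the four contributions of $\|\cdot\|^{(2)}$ will be bounded by the corresponding contribution to $\|f\|_{D,\overline{\alpha},\overline{p},\overline{q}}^{(2)}$. Since $\phi$ is supported in $[0,T]$, the product $(\phi\otimes 1)f$ vanishes outside $D$, so $\|(\phi\otimes 1)f\|_{\overline{p}}\le\|\phi\|_\infty\|f\|_{D,\overline{p}}$ is immediate. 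Because $\phi$ does not depend on the second variable, $\delta_{h_2}[(\phi\otimes 1)f](x)=\phi(x_1)\delta_{h_2}f(x)$, which yields $\|\delta_{h_2}[(\phi\otimes 1)f]\|_{\overline{p}}\le\|\phi\|_\infty\|\delta_{h_2}f\|_{D,\overline{p}}$.

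The core computation is the control of the $\delta_{h_1}$- and $\square$-increments. Splitting $\R$ into the three regions $[-h_1,0]$, $[0,T-h_1]$ and $(T-h_1,T]$, I write, for $h_1\in(0,1]$ and $x_1\in[0,T-h_1]$,
\begin{equation*}
  \delta_{h_1}[(\phi\otimes 1)f](x_1,x_2)=\phi(x_1+h_1)\delta_{h_1}f(x_1,x_2)+\big[\phi(x_1+h_1)-\phi(x_1)\big]f(x_1,x_2),
\end{equation*}
while on the two boundary strips only one summand of the increment survives, yielding $\phi(x_1+h_1)f(x_1+h_1,x_2)$ on $[-h_1,0]$ and $-\phi(x_1)f(x_1,x_2)$ on $(T-h_1,T]$. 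The hypothesis $\phi\in\mathcal{S}(\R)$ with $\supp\phi\subseteq[0,T]$ forces $\phi(0)=\phi(T)=0$, so the mean value theorem gives $|\phi(x_1+h_1)|\le\|\phi'\|_\infty h_1$ on $[-h_1,0]$, $|\phi(x_1)|\le\|\phi'\|_\infty h_1$ on $(T-h_1,T]$, and $|\phi(x_1+h_1)-\phi(x_1)|\le\|\phi'\|_\infty h_1$ everywhere. After a translation to express the boundary strips in terms of $\|f\|_{D,\overline{p}}$ this produces
\begin{equation*}
  \|\delta_{h_1}[(\phi\otimes 1)f]\|_{\overline{p}}\lesssim\|\phi\|_\infty\|\delta_{h_1}f\|_{D-h_1,\overline{p}}+\|\phi'\|_\infty h_1\|f\|_{D,\overline{p}}.
\end{equation*}
The rectangular increment is handled identically after the observation $\square_{(h_1,h_2)}[(\phi\otimes 1)f]=\delta_{h_1}[(\phi\otimes 1)\delta_{h_2}f]$, giving the analogous estimate with $f$ replaced by $\delta_{h_2}f$ throughout.

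Plugging these pointwise-in-$h$ bounds into the weighted integrals that make up $\|(\phi\otimes 1)f\|^{(2)}_{\overline{\alpha},\overline{p},\overline{q}}$ separates, for each of the four terms, a main contribution directly controlled by $\|\phi\|_\infty\|f\|_{D,\overline{\alpha},\overline{p},\overline{q}}^{(2)}$ and a remainder carrying the extra factor $h_1$. In the remainder the weight $h_1^{-\alpha_1 q_1-1}$ is upgraded to $h_1^{(1-\alpha_1)q_1-1}$, which is integrable on $(0,1]$ precisely because $\alpha_1<1$, so the remainder contributes a constant multiple of $\|\phi'\|_\infty\|f\|_{D,\overline{\alpha},\overline{p},\overline{q}}^{(2)}$. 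Collecting the four estimates yields the claimed inequality with constant proportional to $\|\phi\|_\infty+\|\phi'\|_\infty=\|\phi\|_{C^1}$. I expect the main obstacle to be the careful bookkeeping at the two one-sided boundary strips $[-h_1,0]$ and $(T-h_1,T]$, where only a single term of the difference survives and thus cannot be interpreted as a difference of $f$; the vanishing of $\phi$ at the endpoints of its support, together with the constraint $\alpha_1<1$, are exactly what render those boundary contributions integrable.
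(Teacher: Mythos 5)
Your proof is correct and follows essentially the same route as the paper's: a Leibniz-type decomposition of the directional and rectangular increments of $(\phi\otimes 1)f$, the mean value theorem applied to the increments of $\phi$, and the integrability of $h_1^{(1-\alpha_1)q_1-1}$ on $(0,1]$ coming from $\alpha_1<1$, all fed into the difference characterization of Theorem~\ref{thm:characteristation}. Your explicit treatment of the boundary strips $[-h_1,0]$ and $(T-h_1,T]$, where only one term of the increment survives and the vanishing of $\phi$ at the endpoints of its support supplies the factor $h_1$, is a welcome extra detail: the paper's argument only estimates increments over $D-h_1$ and passes over this point, yet it is exactly what is needed to control the global norm of $(\phi\otimes 1)f$ on $\R\times\T$.
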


\begin{proof}
  For the $L^{\bar{p}}$-norm the support assumption on $\phi$ immediately implies
  \begin{align*}
    \|(\phi\otimes 1) f\|_{\overline{p}} & =\Big(\int_{\R}\Big(\int_{\T}|\phi(x_{1})f(x_{1},x_{2})|^{p_{2}}\dd x_{2}\Big)^{p_{1}/p_{2}}\dd x_{1}\Big)^{1/p_{1}}\\
    & =\Big(\int_{0}^{T}\Big(\int_{\T}|\phi(x_{1})|^{p_{2}}|f(x_{1},x_{2})|^{p_{2}}\dd x_{2}\Big)^{p_{1}/p_{2}}\dd x_{1}\Big)^{1/p_{1}}\\
    & \le\|\phi\|_{\infty}\|f\|_{D,\bar{p}}.
  \end{align*}
  Next we have to estimate $\|\square_{h_{1},h_{2}}((\phi\otimes 1) f)\|_{D-h_{1},\overline{p}}$ for any $h_{1}\in[0,1]^{2}$. We can write with directional increments
  \begin{align*}
    \square_{h_{1},h_{2}}((\phi\otimes 1) f)(x_{1},x_{2}) & =\delta_{h_{1}}^{1}\delta_{h_{2}}^{2}((\phi\otimes 1) f)(x_{1},x_{2})\\
    & =\delta_{h_{1}}^{1}\big(\phi(x_{1})\delta_{h_{2}}^{2}f(x_{1},x_{2})\big)\\
    & =\phi(x_{1}+h_{1})\square_{h_{1},h_{2}}f(x_{1},x_{2})+\delta_{h_{1}}^{1}\phi(x_{1})\delta_{h_{2}}^{2}f(x_{1},x_{2}).
  \end{align*}
  The first term we can estimate as follows
  \begin{equation*}
    |\phi(x_{1}+h_{1})\square_{h_{1},h_{2}}f(x_{1},x_{2})|
    \leq  \| \phi  \|_{L^\infty} | \square_{h_{1},h_{2}}f(x_{1},x_{2})| .
  \end{equation*}
  For the second term we use the mean value theorem in order to find
  \begin{equation*}
    |\delta_{h_{1}}^{1}\phi(x_{1}) \delta_{h_{2}}^{2}f(x_{1},x_{2})|
    \le \|\phi' \|_{L^\infty} h_1 | \delta_{h_{2}}^{2}f(x_{1},x_{2})|.
  \end{equation*}
  Consequently, we obtain
  \begin{align*}
    &\|  \square_{h_{1},h_{2}}((\phi\otimes 1) f)(x_{1},x_{2}) \|_{D-h_{1}, \overline{p}}\\
    &\quad \leq   \; \;   \| \phi  \|_{L^\infty} \|  \| \square_{h_{1},h_{2}}f \|_{D-h_{1}, \overline{p}}  + h_1 \|\phi' \|_{L^\infty}  \| \delta_{h_2}^2 f   \|_{D-h_{1}, \overline{p}}.
  \end{align*}
  Now, we have to integrate over \(h_1,h_2\) with the associated weights/powers (see definition of \(\|f\|_{D,\overline{\alpha},\overline{p},\overline{q}}^{(2)} \)). We notice that \(h_{1}^{-\alpha_{1}q_{1}} \frac{h_1^{q_{1}}}{|h_{1}|} \) is integrable over \([0,1]\) for \(i=1,2\) since \(1-\alpha_{i} > 0 \). Hence, we have
  \begin{align*}
    &\int_{0}^{1}h_{1}^{-\alpha_{1}q_{1}}\Big(\int_{0}^{1}h_{2}^{-\alpha_{2}q_{2}}\big(\|\square_{h_{1},h_{2}}((\phi\otimes 1) f)\|_{D-h_1,\overline{p}}\big)^{q_{2}}\frac{\dd h_{2}}{|h_{2}|}\Big)^{q_{1}/q_{2}}\frac{\dd h_{1}}{|h_{1}|}  \\
    &\quad\lesssim \| \phi\|_{C^1}^{q_1}   \Big( \int_{0}^{1}h_{1}^{-\alpha_{1}q_{1}}\Big(\int_{0}^{1}h_{2}^{-\alpha_{2}q_{2}}\big(\|\square_{h_{1},h_{2}} f\|_{D-h_1,\overline{p}}\big)^{q_{2}}\frac{\dd h_{2}}{|h_{2}|}\Big)^{q_{1}/q_{2}}\frac{\dd h_{1}}{|h_{1}|}  \\
    &\quad\quad + \Big(\int_{0}^{1}h_{2}^{-\alpha_{2}q_{2}}\big(\|\delta_{h_{2}} f\|_{D,\overline{p}}\big)^{q_{2}}\frac{\dd h_{2}}{|h_{2}|}\Big)^{q_{1}/q_{2}} \Big)\\
    &\quad\lesssim \| \phi\|_{C^1}^{q_1}  \Big(  \|f\|_{D,\overline{\alpha},\overline{p},\overline{q}}^{(2)}\Big)^{q_1}.
  \end{align*}
  Since
  \[\|\delta_{h_2}((\phi\otimes 1)f) \|_{D,\overline{p}}=\|\phi\delta_{h_2}f \|_{D,\overline{p}}\le \|\phi\|_\infty\|\delta_{h_2}f \|_{D,\overline{p}},\]
  it remains to examine the term \(\|\delta_{h_1}((\phi\otimes 1) f) \|_{D-h_{1},\overline{p}}\). Using the mean-value theorem we find
  \begin{align*}
    |\delta_{h_1}((\phi\otimes1) f)| \le |f(x_1+h_1,x_2) \delta_{h_1}\phi | + |\phi \delta_{h_1} f  |
    \le \| \phi'  \|_{L^\infty} | h_1 |f(x_1+h_1,x_2)|   + \| \phi  \|_{L^\infty} | \delta_{h_1} f  |.
  \end{align*}
  Consequently, multiplying with \(h_1^{-q_1\alpha_1}\) and integrating over \(h_1\) we find 
  \begin{align*}
    &  \int\limits_0^1 h_1^{-\alpha_1 q_1} \|\delta_{h_1}(\phi f)\|_{D-h_1,\overline{p}}^{q_1} \frac{\dd h_1 }{|h_1|} \\
    &\quad\le \,
    \| \phi'  \|_{L^\infty}^{q_1}   \int\limits_0^1 h_1^{(1-\alpha_1) q_1} \|f(\cdot+h_1,x_2)\|_{D-h_1,\overline{p}}^{q_1} \frac{\dd h_1 }{|h_1|}
    +  \| \phi  \|_{L^\infty}  ^{q_1}   \int\limits_0^1 h_1^{-\alpha_1 q_1} \|\delta_{h_1} f \|_{D-h_1,\overline{p}}^{q_1} \frac{\dd h_1 }{|h_1|} \\
    &\quad\lesssim \,  \| \phi\|_{C^1}^{q_1}   \Big(\|f\|_{D, \overline{p}}^{q_1} +  \int\limits_0^1 h_1^{-\alpha_1 q_1} \|\delta_{h_1} f \|_{D-h_1,\overline{p}}^{q_1} \frac{\dd h_1 }{|h_1|} \Big)
  \end{align*}
  where the first integral over \(h_1\) is finite since \(\|f(\cdot+h_1,x_2)\|_{D-h_1,\overline{p}} \le \|f\|_{D,\overline{p}}\) and \(h_1^{(1-\alpha_1) q_1 -1} \) is integrable over \([0,1]\). Combining this with the previous estimates we establish the result.
\end{proof}

\section{Regularity of random fields}\label{sec:regularity of random fields}

While having access to various equivalent characterizations of Besov spaces comes with many advantages, in this section, we want to briefly discuss one in the context of probability. As a rather straightforward consequence of the presented characterisation of Besov spaces with mixed dominating smoothness by differences (Theorem~\ref{thm:characteristation}), we shall derive a version of Kolmogorov's continuity criterion for the Besov regularity with dominating mixed smoothness of random fields. Recall, that the Kolmogorov's continuity criterion is a standard tool to verify the sample path regularity of stochastic processes, see e.g. \cite{Stroock2006}. Moreover, let us remark that the study of Besov, Sobolev and H{\"o}lder regularity of stochastic processes and random fields is a classical topic of on-going interest, see, e.g., \cite{Ciesielski1993,Roynette1993,Schilling1997,Schilling1998,Fageot2017,Hummel2021} and the references therein as well as \cite{Henderson2024} for a more detailed discussion on related works.

\smallskip

Let $(\Omega,\mathcal{F},\mathbb{P})$ be a probability space with expectation operator $\mathbb{E}$, $T\in [1,\infty)$, and $(\R^{m},|\cdot|)$ be the Euclidean space. Furthermore, let $X\colon[0,T]\times\T\times\Omega\to\R^{m}$ be a continuous random field which is periodic in the second argument, that is, for each $x_1,x_2 \in [0,T]\times\T $, $X(x_1,x_2)$ is a real valued random variable and the mapping $(x_1,x_2) \mapsto X(x_1,x_2)(\omega)$ is continuous for almost all $\omega \in \Omega$.

\begin{proposition}\label{prop:Kolmogorov}
  Let $X\colon[0,T]\times\T\times\Omega\to\R^{m}$ be a continuous random field. If $\overline{p}=(p_1,p_2),\overline{q}=(q_1,q_2)\in [1,\infty)^2$ with $q_1\leq q_2 \leq p_1\leq p_2$ and $\overline{\alpha}=(\alpha_1,\alpha_2)\in (0,1)^2$ such that
  \begin{equation}\label{eq:Kolmogorov}
    \mathbb{E}\big[|\square_{(h_1,h_2)}X(x_1,x_2)|^{p_2}\big]
    \lesssim h_{1}^{(1+\alpha_1 q_1)\frac{p_2}{q_1}}h_{2}^{(1+\alpha_2 q_2)\frac{p_2}{q_2}},
  \end{equation}
  \begin{equation*}
    \mathbb{E}\big[|\delta_{h_1}X(x)|^{p_2}\big]\lesssim h_{1}^{(1+\alpha_1 q_1)\frac{p_2}{q_1}}
    \quad \text{and}\quad
    \mathbb{E}\big[|\delta_{h_2}X(x)|^{p_2}\big]\lesssim  h_{2}^{(1+\alpha_2 q_2)\frac{p_2}{q_2}},
  \end{equation*}
  for all $x_1\in[0,T-h_1]$, $x_2\in\T$ and $h_{1},h_{2}\in[0,T]$, then $\|X\|_{[0,T]\times \T,\overline{\alpha},\overline{p},\overline{q}}^{(2)} <\infty$ almost surely, where $\overline{\alpha}=(\alpha_1,\alpha_2)$.
\end{proposition}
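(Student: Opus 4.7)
The plan is to use the equivalent difference characterization of the Besov norm on $D=[0,T]\times\T$ via $\|\cdot\|_{D,\overline\alpha,\overline p,\overline q}^{(2)}$ and show that each of its four summands is a.s.\ finite. The plain $L^{\overline p}(D)$-term is a.s.\ finite because $X$ is continuous on the compact set $D$ and hence bounded along trajectories. For the remaining three terms --- two directional-increment integrals and one rectangular-increment double integral --- it suffices to show that a suitable positive power of each has finite expectation, whence a.s.\ finiteness follows by Fubini/Tonelli.

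The core step concerns the rectangular integral $A$ from the last summand of $\|X\|_{D,\overline\alpha,\overline p,\overline q}^{(2)}$. I would compute $\mathbb{E}[A^{q_1}]$ by Fubini in $h_1$ followed by a cascade of three Jensen inequalities: with exponent $q_1/q_2\le 1$ on the $L^{q_2}_{h_2}$-integral, with exponent $q_2/p_1\le 1$ on the outer $L^{p_1}_{x_1}$-integral defining $\|\cdot\|_{D-h_1,\overline p}$, and with exponent $p_1/p_2\le 1$ on the inner $L^{p_2}_{x_2}$-integral. All three Jensens are permitted precisely because of the assumed ordering $q_1\le q_2\le p_1\le p_2$. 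The cascade brings the expectation down to the pointwise moment $\mathbb{E}[|\square_{(h_1,h_2)}X(x_1,x_2)|^{p_2}]$, which is estimated by \eqref{eq:Kolmogorov}. Reversing the cascade and performing the $x_2$- and $x_1$-integrations successively yields a bound of the form $h_1^{(1+\alpha_1 q_1)q_2/q_1}h_2^{1+\alpha_2 q_2}$ on $\mathbb{E}[\|\square_{(h_1,h_2)}X\|_{D-h_1,\overline p}^{q_2}]$. The net integrand in $h_2$ is then $h_2^{-\alpha_2 q_2+1+\alpha_2 q_2-1}=1$, integrating to a constant on $[0,1]$; raising the result to the power $q_1/q_2$ and multiplying by the $h_1$-weight leaves an integrand $h_1^{-\alpha_1 q_1+1+\alpha_1 q_1-1}=1$, which again integrates to a constant. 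Thus $\mathbb{E}[A^{q_1}]<\infty$ and $A<\infty$ a.s. The two directional terms are handled identically with one coordinate removed, using the respective hypotheses on $\mathbb{E}[|\delta_{h_i}X|^{p_2}]$; the ratios $q_1/p_1\le 1$ and $q_2/p_1\le 1$ needed for the smaller cascade again hold.

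The main difficulty is the careful bookkeeping of the four nested exponents from $\overline p$ and $\overline q$. The Kolmogorov-type exponents $(1+\alpha_i q_i)p_2/q_i$ in \eqref{eq:Kolmogorov} are chosen so that the cumulative effect of the Jensen cascade, which multiplies the $p_2$-th moment exponents by $q_1/p_2$ in the $h_1$-direction and by $q_2/p_2$ in the $h_2$-direction, produces powers of $h_i$ compensating the singular Besov weights $h_i^{-\alpha_i q_i-1}$ with zero margin to spare. This sharpness makes the ordering $q_1\le q_2\le p_1\le p_2$ essential not only for permitting the Jensen cascade but also for making the final integrals over $[0,1]$ converge at the origin.
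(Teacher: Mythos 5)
Your proposal is correct and follows essentially the same route as the paper: Fubini to bring the expectation inside the outer $h_1$-integral, then the Jensen cascade through the exponents $q_1/q_2$, $q_2/p_1$, $p_1/p_2$ (enabled by $q_1\le q_2\le p_1\le p_2$) down to the pointwise moments, the assumed bounds, and the exact cancellation of the weights, with the $L^{\overline p}$-term handled by continuity of $X$ on the compact domain. The exponent bookkeeping you give, including the intermediate bound $h_1^{(1+\alpha_1 q_1)q_2/q_1}h_2^{1+\alpha_2 q_2}$ and the zero-margin cancellation, matches the paper's computation.
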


\begin{proof}
  Using Fubini's theorem, Jensen's inequality the assumption of the proposition, we get for $D=[0,T]\times\T$
  \begin{align*}
    &\mathbb{E}\Big[\int_{0}^{1}h_{1}^{-\alpha_{1}q_1}\Big( \int_{0}^{1}h_{2}^{-\alpha_{2}q_2}\big(\|\square_{(h_{1},h_{2})}X\|_{D-h_1,\overline{p}}\big)^{q_2}\frac{\dd h_{2}}{|h_{2}|}\Big)^{\frac{q_1}{q_2}}\frac{\dd h_{1}}{|h_{1}|}\Big]\\
    & \,\, \leq \int_{0}^{1}\Big(\int_{0}^{1}\Big(\int_0^{T-h_1}\Big(\int_{\T} \mathbb{E}\big[ |\square_{(h_{1},h_{2})}X(x_1,x_2)|^{p_2}\big]\dd x_2\Big)^{\frac{p_1}{p_2}} \dd x_1\Big)^{\frac{q_2}{p_1}} \frac{\dd h_{2}}{h_{2}^{1+\alpha_{2}q_2}}\Big)^{\frac{q_1}{q_2}}\frac{\dd h_{1}}{h_{1}^{1+\alpha_{1}q_1}}\\
    &\,\,\lesssim \int_{0}^{1}\Big(\int_{0}^{1}h_{1}^{(1+\alpha_{1}q_1)\frac{p_2}{q_1}\frac{p_1}{p_2}\frac{q_2}{p_1}}  h_{2}^{(1+\alpha_{2}q_2)\frac{p_2}{q_2}\frac{p_1}{p_2}\frac{q_2}{p_1}} \frac{\dd h_{2}}{h_{2}^{1+\alpha_{2}q_2}}\Big)^{\frac{q_1}{q_2}}\frac{\dd h_{1}}{h_{1}^{1+\alpha_{1}q_1}}
    \,\, \lesssim 1,\\
    &\mathbb{E}\Big[\int_{0}^{1}h_{1}^{-\alpha_{1}q_1}\big(\|\delta_{h_{1}}X\|_{D-h_1,\overline{p}}\big)^{q_1}\frac{\dd h_{1}}{|h_{1}|}\Big]\\
    & \,\, = \int_{0}^{1}h_{1}^{-\alpha_{1}q_1}
    \Big( \int_0^{T-h_1}\Big( \int_{\T} \mathbb{E}\big[ |\delta_{h_{1}}X(x_1,x_2)|^{p_2}\big]\dd x_2\Big)^{\frac{p_1}{p_2}}\dd x_1\Big)^{\frac{q_1}{p_1}} \frac{\dd h_{1}}{h_{1}}
    \,\,\lesssim 1,
    \intertext{and}
    &\mathbb{E}\Big[\int_{0}^{1}h_{2}^{-\alpha_{2}q_2}\big(\|\delta_{h_{2}}X\|_{D,\overline{p}}\big)^{q_2}\frac{\dd h_{2}}{|h_{2}|}\Big]\\
    & \,\, = \int_{0}^{1}h_{2}^{-\alpha_{2}q_2}
    \Big( \int_0^{T}\Big( \int_{\T} \mathbb{E}\big[ |\delta_{h_{2}}X(x_1,x_2)|^{p_2}\big]\dd x_2\Big)^{\frac{p_1}{p_2}}\dd x_1\Big)^{\frac{q_2}{p_1}} \frac{\dd h_{2}}{|h_{2}|}
    \,\,\lesssim 1.
  \end{align*}
  Furthermore, since $X$ is continuous on $[0,T]^2$, we have $\|X\|_{D,\overline{p}}<\infty $ almost surely.

  Hence, we get $\mathbb E[\|X\|^{(2)}_{D,\overline{\alpha},\overline{p},\overline{q}}]<\infty$ and thus $\|X\|^{(2)}_{D,\overline{\alpha},\overline{p},\overline{q}}<\infty$ almost surely.
\end{proof}

\begin{remark}
  The assumption in Proposition~\ref{prop:Kolmogorov} that the random field~$X$ is continuous can replaced by a suitable condition on the parameters $\overline{p},\overline{q},\overline{\alpha}$. Indeed, since
  \begin{equation*}
    \mathbb{E}\big[|X(x+h)-X(x)|^{p_2}\big]
    \lesssim
    \mathbb{E}\big[|\square_{h}X(x)|^{p_2}\big]
    +\mathbb{E}\big[|\delta_{h_1}X(x)|^{p_2}\big]
    +\mathbb{E}\big[|\delta_{h_2}X(x)|^{p_2}\big],
  \end{equation*}
  one can apply the classical Kolmogorov continuity theorem for random fields (see e.g. \cite[Theorem~2.3.1]{Khoshnevisan2002}) to obtain a continuous modification of~$X$ for suitable $\overline{p},\overline{q},\overline{\alpha}$.
\end{remark}

\begin{remark}
  The special case $p_1=p_2=q_1=q_2=\infty$, which corresponds to H\"older regularity with mixed dominating smoothness, is a common tool in probability theory, used, e.g., in the context of stochastic partial differentiable equations~\cite{Quer2007} and of general random fields~\cite{Hu2013}. For the special case of H{\"o}lder regularity, a Kolmogorov continuity criterion, related to Proposition~~\ref{prop:Kolmogorov}, was presented in \cite[Theorem~3.1]{Hu2013}.
\end{remark}

\begin{remark}
  Proposition~\ref{prop:Kolmogorov} holds analogously for random fields~$X$ on the domains $[0,T]\times [0,T]$ and $\mathbb{T}\times\mathbb{T}$, respectively.
\end{remark}

To conclude this section, we provide two exemplary applications of the presented Kolmogorov's continuity criterion for Besov regularity with dominating mixed smoothness.

\subsection{Exemplary application: Gaussian fields}

Gaussian fields constitute a central object in probability theory, see e.g. \cite{Khoshnevisan2002}. In the following we consider a continuous centred Gaussian random field~$W$ with covariance function
\begin{equation*}
  Q(x,y):=\mathbb{E}[W(x)W(y)]\quad \text{for }(x,y)\in [0,T]\times \T.
\end{equation*}
In particular, for each $x\in[0,T]$, $W(x)$ is a Gaussian distributed random variable with mean zero and variance $Q(x,x)$.

\begin{corollary}
  Let $p\in \N$ be even and $\bar\alpha =(\alpha_1,\alpha_2)\in (0,1)^2$. Let $W$ be continuous centred Gaussian random field such that
  \begin{equation*}
    Q(x,y) = Q_1(x_1,y_1)Q_2(x_2,y_2),\quad x,y\in [0,T]\times\T,
  \end{equation*}
  for some continuous functions $Q_1,Q_2\colon[0,T]\times\T\to \R$ satisfying
  \begin{equation*}
    |\square_{(h_i,h_i)} Q_i(x_i,x_i)|\lesssim h_i^{2(1+\alpha_i)/p}, \quad h_1,h_2\in [0,1],
    \, x_1 \in [0,T-h_1],\, x_2\in \T,
  \end{equation*}
  for $i=1,2$. Then, $\|W\|_{[0,T]\times\T,\overline{\alpha},\overline{p},\overline{p}}^{(2)} <\infty$ almost surely with $\overline{p}=(p,p)$.
\end{corollary}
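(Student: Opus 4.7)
The plan is to reduce the claim to Proposition~\ref{prop:Kolmogorov} with the symmetric choice $\bar p = \bar q = (p,p)$; under this choice the ordering $q_1\le q_2\le p_1\le p_2$ is trivially satisfied, so all that remains is to verify the three moment bounds on $\mathbb{E}[|\square_{(h_1,h_2)}W(x)|^p]$, $\mathbb{E}[|\delta_{h_1}W(x)|^p]$ and $\mathbb{E}[|\delta_{h_2}W(x)|^p]$ required by the proposition.

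The first step is to exploit Gaussianity. Each of the above increments is a linear combination of values of $W$ and hence is itself a centred Gaussian random variable. For even $p$, any centred Gaussian $Z$ satisfies $\mathbb{E}[|Z|^p] = c_p\,(\mathbb{E}[Z^2])^{p/2}$ with a constant $c_p=(p-1)!!$ depending only on $p$. It therefore suffices to estimate the variance of each increment and then raise to the power $p/2$.

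The second step is to compute these variances using the tensor product structure of the covariance. Writing
\begin{equation*}
\square_{(h_1,h_2)} W(x) = \sum_{i,j\in\{0,1\}} (-1)^{i+j}\, W(x_1+ih_1,\, x_2+jh_2),
\end{equation*}
and expanding the square produces a fourfold sum of values of $Q$. Since $Q(u,v) = Q_1(u_1,v_1)Q_2(u_2,v_2)$ splits across the two coordinates, this fourfold sum factors as a product of two twofold sums, each of which is by definition a rectangular second-order increment of $Q_i$ on the diagonal. The resulting identity is
\begin{equation*}
\mathbb{E}\bigl[(\square_{(h_1,h_2)}W(x))^2\bigr]
 = \square_{(h_1,h_1)} Q_1(x_1,x_1)\cdot \square_{(h_2,h_2)} Q_2(x_2,x_2),
\end{equation*}
and analogously, by the same expansion with one of the increments replaced by the identity,
\begin{equation*}
\mathbb{E}\bigl[(\delta_{h_i} W(x))^2\bigr] = Q_{3-i}(x_{3-i},x_{3-i})\cdot \square_{(h_i,h_i)} Q_i(x_i,x_i),\qquad i=1,2.
\end{equation*}
The continuity of $Q_1$ and $Q_2$ on the compact domain $[0,T]\times\mathbb T$ makes the extra diagonal factor $Q_{3-i}(x_{3-i},x_{3-i})$ uniformly bounded, so it contributes only an absolute constant to the estimate.

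Finally, I would plug the hypothesis $|\square_{(h_i,h_i)}Q_i(x_i,x_i)| \lesssim h_i^{2(1+\alpha_i)/p}$ into these variance identities and raise the resulting bounds to the power $p/2$ via the Gaussian moment formula from Step~1. This yields precisely the decay in $h_1, h_2$ demanded by Proposition~\ref{prop:Kolmogorov}, which then delivers $\|W\|_{[0,T]\times\mathbb T,\bar\alpha,\bar p,\bar p}^{(2)}<\infty$ almost surely. No step is genuinely hard: the main piece of care is the bookkeeping for the factorisation of the fourfold sum, but this is automatic once the product structure of $Q$ is exploited.
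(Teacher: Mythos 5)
Your proposal is correct and takes essentially the same route as the paper: equivalence of Gaussian moments for the even exponent $p$, factorization of the second moments of $\square_{(h_1,h_2)}W$ and $\delta_{h_i}W$ through the tensor-product structure of $Q$ (which you verify by expanding the fourfold sum directly, where the paper cites a result of Hu--Le), and then an application of Proposition~\ref{prop:Kolmogorov} with $\overline{p}=\overline{q}=(p,p)$. The bounding of the extra diagonal factor $Q_{3-i}(x_{3-i},x_{3-i})$ by continuity on the compact domain is exactly the implicit step in the paper's estimate, so nothing is missing relative to the paper's own argument.
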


\begin{proof}
  Since $W$ is a Gaussian field, we know that $\square_{h} W(x)$ is a Gaussian random variable for $h=(h_1,h_2)\in [0,1]^2$, $x_1\in [0,T-h_1]$ and $x_2\in \T$. By the equivalence of Gaussian moments, the assumption of the lemma and \cite[Proposition~4.1]{Hu2013}, we obtain
  \begin{equation*}
    \mathbb{E} [| \square_h W(x)|^p]
    = C_p \mathbb{E} [| \square_h W(x)|^2]^{p/2}
    =  C_p  \Pi_{i=1}^2|\square_{(h_i,h_i)} Q_i(x_i,x_i)|^{p/2}
    \lesssim   h_1^{1+\alpha_1} h_2^{1+\alpha_2}.
  \end{equation*}
  and
  \begin{align*}
    \mathbb{E} [| \delta_{h_1} W(x)|^p]
    &= C_p \mathbb{E} [| \delta_{h_1} W(x) |^2]^{p/2}\\
    &= C_p \mathbb{E} \big[ W^2(x_1+h_1,x_2)-2W(x_1+h_1,x_2)W(x_1,x_2)+W^2(x_1,x_2)\big]^{p/2}\\
    &= C_p |Q_2(x_2,x_2) \square_{(h_1,h_1)} Q_1(x_1,x_1)|^{p/2}
    \lesssim h_2^{1+\alpha_1}.
  \end{align*}
  for some constant $C_p$ depending only on $p$. The bound for $  \mathbb{E} [| \delta_{h_2} W(x)|^p]$ follows analogously. Hence, by Proposition~\ref{prop:Kolmogorov} we get $\|W\|_{[0,T]\times\T,\overline{\alpha},(p,p),(p,p)}^{(2)} <\infty$ almost surely.
\end{proof}

\subsection{Exemplary application: stochastic heat equation}

Random fields are often modelled by stochastic partial differential equations, see e.g. \cite{Dalang2009}. To illustrate the previous result let us characterize the regularity of the solution to the stochastic heat equation on $\R_{+}\times [-\pi,\pi]$ with Dirichlet boundary conditions:
\begin{align*}
  \begin{cases}
  \partial_{t}u(t,x) & =\Delta u(t,x)+\xi(t,x), \\
  u(t,-\pi)&=u(t,\pi)=0,\qquad t\in\R_+\\
  u(0,x) & =0,\qquad (t,x)\in \R_{+}\times\T,
  \end{cases}
\end{align*}
with space-time white noise $\xi$. The latter can be realized as time derivative $\xi=d W_t(x)$ of a cylindrical Brownian motion $W_t(x)=\sum_{j\ge1} B_j(t)\phi_j(x)$, where $B_j=(B_j(t),t\ge1)$ are independent Brownian motions on a probability space $(\Omega, \mathcal{F},\mathbb{P})$ and $(\phi_j)_{j\ge1}$ is the orthonormal basis given by the eigenfunctions of Laplace operator on $\{g\in H^2((-\pi,\pi)):g(-\pi)=g(\pi)=0\}$.

It is well known that the weak solution to stochastic heat equation is almost $1/2$-H{\"o}lder in space and almost $1/4$-H{\"o}lder regular in time. The mixed regularity allows us to characterize the interplay between these to directional regularities. To apply Proposition~\ref{prop:Kolmogorov}, we can exploit that the solution field $u$ is a centred Gaussian field admitting precise estimates of the rectangular increments, cf. \cite{HildebrandtTrabs2021}.

\begin{corollary}
  Let $T>0$, $a\in(0,1)$ and $1\le q\le p$ with even $p$. If $\bar\alpha=(\frac{a}{4}-\frac{1}{q},\frac{1-a}{2}-\frac{1}{q})\in(0,1]^2$, then $u\in B^{\bar\alpha}_{\overline{p},\overline{q}}([0,T]\times\T)$ almost surely with $\overline{p}=(p,p)$ and $\overline{q}=(q,q)$.
\end{corollary}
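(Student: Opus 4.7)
The plan is to apply Proposition~\ref{prop:Kolmogorov} and reduce the $p$-th moment estimates of the increments to second moment computations via Gaussian moment equivalence. Recall that the mild solution admits the representation $u(t,x)=\int_0^t\!\int_{-\pi}^\pi G(t-s,x,y)\,W(ds,dy)$ for the Dirichlet heat kernel $G$, so $u$ is a Wiener integral of a deterministic kernel against space-time white noise. In particular, the random field $u$ is jointly centred Gaussian, and for every even $p$ and each pointwise-defined linear functional $Z$ of $u$, one has $\mathbb{E}[|Z|^p]=C_p(\mathbb{E}[Z^2])^{p/2}$.

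Next, I would identify the exponents dictated by Proposition~\ref{prop:Kolmogorov}. With $\overline{p}=(p,p)$, $\overline{q}=(q,q)$, $\alpha_1=a/4-1/q$ and $\alpha_2=(1-a)/2-1/q$, a direct computation gives $(1+\alpha_1 q)p/q=ap/4$ and $(1+\alpha_2 q)p/q=(1-a)p/2$. Hence the Kolmogorov conditions read
\begin{equation*}
  \mathbb{E}[|\square_{(h_1,h_2)} u(x)|^{p}]\lesssim h_1^{ap/4}\,h_2^{(1-a)p/2},\qquad
  \mathbb{E}[|\delta_{h_1} u(x)|^{p}]\lesssim h_1^{ap/4},\qquad
  \mathbb{E}[|\delta_{h_2} u(x)|^{p}]\lesssim h_2^{(1-a)p/2}.
\end{equation*}
By Gaussian moment equivalence this is equivalent to establishing the second-moment bounds $\mathbb{E}[|\square_{(h_1,h_2)}u(x)|^2]\lesssim h_1^{a/2}h_2^{1-a}$, $\mathbb{E}[|\delta_{h_1}u(x)|^2]\lesssim h_1^{a/2}$ and $\mathbb{E}[|\delta_{h_2}u(x)|^2]\lesssim h_2^{1-a}$, uniformly in $x_1\in[0,T-h_1]$, $x_2\in\T$ and $h_1,h_2\in[0,1]$.

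The directional bounds follow readily from the classical time- and space-regularity of the stochastic heat equation: Itô's isometry and standard heat-kernel estimates give $\mathbb{E}[|\delta_{h_1}u(x)|^2]\lesssim h_1^{1/2}\le h_1^{a/2}$ and $\mathbb{E}[|\delta_{h_2}u(x)|^2]\lesssim h_2\le h_2^{1-a}$ for $h_1,h_2\in[0,1]$. The rectangular increment bound is the substantive input: applying $\square_{(h_1,h_2)}$ to the heat kernel produces joint cancellation in time and space which interpolates between the two directional regularities, yielding the tensorized estimate $\mathbb{E}[|\square_{(h_1,h_2)}u(x)|^2]\lesssim h_1^{a/2}h_2^{1-a}$ for any $a\in(0,1)$; this is the content of the estimate cited from \cite{HildebrandtTrabs2021}, and is indeed the step I expect to be the principal obstacle, since the free parameter $a$ must be exhibited explicitly by a careful interpolation in the heat-kernel computation.

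Finally, since $u$ is continuous on $[0,T]\times\T$ (a well-known consequence of the isotropic Kolmogorov criterion applied with the above moment bounds), all hypotheses of Proposition~\ref{prop:Kolmogorov} are met with $p_1=p_2=p$ and $q_1=q_2=q$ satisfying $q\le q\le p\le p$. One therefore obtains $\|u\|^{(2)}_{[0,T]\times\T,\bar\alpha,\overline{p},\overline{q}}<\infty$ almost surely, and the equivalence of norms on $B^{\bar\alpha}_{\overline{p},\overline{q}}([0,T]\times\T)$ established in Theorem~\ref{thm:characteristation} (via its corollary for compact domains) yields $u\in B^{\bar\alpha}_{\overline{p},\overline{q}}([0,T]\times\T)$ almost surely.
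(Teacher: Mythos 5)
Your proposal is correct and follows essentially the same route as the paper: reduce to second moments by Gaussian moment equivalence, verify the exponent bookkeeping $(1+\alpha_i q)p/q$, cite \cite{HildebrandtTrabs2021} for the rectangular increment and standard heat-equation estimates for the directional ones, and invoke Proposition~\ref{prop:Kolmogorov}. The only cosmetic difference is that the paper obtains the tensorized bound $h_1^{a/2}h_2^{1-a}$ from the cited estimate $\mathbb{E}[|\square_h u|^2]\lesssim\min\{\sqrt{h_1},h_2\}$ by the elementary interpolation $\min\{A,B\}\le A^aB^{1-a}$, rather than extracting the parameter $a$ inside the heat-kernel computation as you suggest.
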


\begin{proof}
  Since $u$ is a Gaussian field, we know that $\square_{h} u(x)$ is a Gaussian random variable for $h=(h_1,h_2)\in [0,1]^2$, $t\in [0,T-h_1]$ and $x\in \T$. By the equivalence of Gaussian moments and \cite[Proposition~3.5]{HildebrandtTrabs2021}, we obtain
  \begin{align*}
    \mathbb{E} [|\square_h u(t,x)|^p]
    = C_p \mathbb{E} [| \square_h u(t,x)|^2]^{p/2}
    \lesssim \min\big\{\sqrt{h_1},h_2\big\}^{p/2}
    \le h_1^{ap/4}h_2^{(1-a)p/2}
  \end{align*}
  for any $a\in(0,1)$ and where $C_p>0$ is some constant depending only on $p$. Note that the upper bound in $\min\{\sqrt{h_1},h_2\}$ is sharp. We obtain \eqref{eq:Kolmogorov} for $\alpha_1=\frac{a}{4}-\frac{1}{q}$ and $\alpha_2=\frac{1-a}{2}-\frac{1}{q}$. The necessary bounds for $\mathbb{E} [| \square_{h_i} u(t,x)|^p],i=1,2,$ follow analogously using $\mathbb E[(u(t+h_1,x)-u(t,x))^2]\lesssim \sqrt{h_1}$ and $\mathbb E[(u(t,x+h_2)-u(t,x))^2]\lesssim h_2$ \cite[Lemma~5.21]{DaPrato1992}. Hence, Proposition~\ref{prop:Kolmogorov} yields $\|u\|_{[0,T]\times\T,\overline{\alpha},(p,p),(p,p)}^{(2)} <\infty$ almost surely.
\end{proof}

%\bibliography{quellen}{}
%\bibliographystyle{amsalpha}

\newcommand{\etalchar}[1]{$^{#1}$}
\def\cprime{$'$}
\providecommand{\bysame}{\leavevmode\hbox to3em{\hrulefill}\thinspace}
\providecommand{\MR}{\relax\ifhmode\unskip\space\fi MR }
% \MRhref is called by the amsart/book/proc definition of \MR.
\providecommand{\MRhref}[2]{%
  \href{http://www.ams.org/mathscinet-getitem?mr=#1}{#2}
}
\providecommand{\href}[2]{#2}

\end{document}